\DeclareMathOperator{\dom}{dom}
\DeclareMathOperator{\aut}{Aut}
\DeclareMathOperator{\emb}{Emb}
\DeclareMathOperator{\id}{Id}
\newcommand{\N}{\mathbb{N}}
\newcommand{\tr}{\, |\, }
\def\Power #1 { \powerset(#1) }
\def\Bidom #1 { {\mathfrak P} (#1) }
\newtheorem{definition}{{\bf Definition}}[section]
\newtheorem{note}[definition]{{\bf Note}}
\newtheorem{example}[definition]{\noindent {\bf Example}}
\newtheorem{fact}[definition]{\noindent {\bf Fact}}
\def\proofref #1 {{\noindent  {\bf Proof} (#1).}\ }
\newcommand{\overG}{\overline{\mathrm{G}}}
\newcommand{\finp}[1]{\powerset_{<\omega}(#1)}
\newcommand{\llg}{\llbracket}
\newcommand{\rrg}{\rrbracket} 
\newcommand{\restrict}[2]{#1\mspace{-3mu}\mathbin{\restriction}\mspace{-2mu} #2}
\newtheorem{thm}{Theorem}[section]
\newtheorem{lem}{Lemma}[section]
\newtheorem{cor}{Corollary}[section] 
\newtheorem{defin}{Definition}[section]
\newtheorem{problem}{Problem}[section]
\def\endproof{\hfill {\kern 6pt\penalty 500
\raise -0pt\hbox{\vrule \vbox to5pt {\hrule width 5pt
\vfill\hrule}\vrule}}}
\def\centerpicture #1 by #2 (#3){\leavevmode
        \vbox to #2{
        \hrule width #1 height 0pt depth 0pt
        \vfill
        \special{pictfile #3}}}
\begin{document}

\title[The poset of copies]{The poset of copies for automorphism groups of countable relational structures}

\author[C.Laflamme]{Claude Laflamme*} 
\address{Mathematics \& Statistics Department, University of Calgary, Calgary, Alberta, Canada T2N 1N4}
\email{laflamme@ucalgary.ca} 
\thanks{*Supported by NSERC of Canada Grant \# 690404} 
\author[M.Pouzet]{Maurice Pouzet**}\thanks{**Research started while the author visited the Mathematics and Statistics Department of the University of 
Calgary in 2010; the support provided is gratefully acknowledged.}
 \address{ICJ, Math\'ematiques, Universit\'e Claude-Bernard Lyon1, 43 bd. 11 Novembre 1918, 69622 Villeurbanne Cedex, France and Mathematics \& Statistics Department, University of Calgary, Calgary, Alberta, Canada T2N 1N4}
 \email{pouzet@univ-lyon1.fr }
 \author [N.Sauer]{Norbert Sauer*}\thanks{***Supported by NSERC of Canada Grant \# 691325}\address{
Mathematics \& Statistics Department, University of Calgary, Calgary, T2N1N4, Alberta, Canada  T2N1N4}
\email{nsauer@math.ucalgary.ca}
\author[R.Woodrow]{Robert Woodrow*} \address{Mathematics \& Statistics Department, University of Calgary, Calgary, Alberta, Canada T2N 1N4}
\email{woodrow@ucalgary.ca }  
\thanks{The first and fourth author were supported by  by the LABEX MILYON (ANR-10-LABX-0070) of Universit\'e de Lyon within the program ``Investissements d'Avenir (ANR-11-IDEX-0007)" operated by the French National Research Agency (ANR)} 

\date{\today }


\keywords{homogeneous relational structures, automorphism groups, homogeneity, posets}
\subjclass[2000]{ Relational Structures, Partially ordered sets and lattices (06A, 06B)}
\dedicatory{\dagger Dedicated to the memory of Ivo G. Rosenberg}

\begin{abstract}  
Let $\mathrm{G}$  be  a subgroup of the symmetric  group $\mathfrak S(U)$ of all permutations of a countable set $U$. Let $\overline{\mathrm{G}}$ be the topological closure of $\mathrm{G}$ in the function topology on $U^U$.  We initiate the study of the poset $\overline{\mathrm{G}}[U]:=\{f[U]\mid f\in \overline{\mathrm{G}}\}$ of images of the functions in $\overline{\mathrm{G}}$,  being ordered under inclusion. This set $\overline{\mathrm{G}}[U]$ of subsets of the set $U$ will be called  the \emph{poset of copies for} the group  $\mathrm{G}$.  A denomination being justified by the fact that for every subgroup $\mathrm{G}$ of the symmetric group $\mathfrak S(U)$ there exists  a homogeneous relational structure $R$ on $U$ such   that $\overline G$ is the set of embeddings of the homogeneous structure $R$ into itself and  $\overline{\mathrm{G}}[U]$ is the set of copies of $R$ in  $R$ and that the set of bijections $\overline G\cap \mathfrak S(U)$ of $U$ to $U$ forms the group of  automorphisms  of $\mathrm{R}$.                                    
\end{abstract}  
\maketitle

\section{Introduction}

Countable sets will be infinite. For a countable  set $U$ let $\mathfrak{S}(U)$ be the symmetric group of $U$ and let $U^{U}$ be the set of functions of $U$ into $U$  and let $\powerset(U)$ be the power set  of $U$ and let $\finp{U}$  be  the set of finite subsets of $U$.   If $K$ is a subset of $U^U$ then $\overline{K}$ denotes the adherence of $K$ with respect to the function topology, the set $U$ being equipped with the discrete topology.   Then $\overline{K}$ is  the set of functions $f\in U^{U}$  so that for every set $F\in \finp{U}$ there exists a function $g\in K$ for which $f(x)=g(x)$ for all $x\in F$.    That is $\restrict{f}{F}=\restrict{g}{F}$.  For a function $f$ let $f[A]=\{f(x)\mid x\in \dom(f)\cap A\}$. For a set of functions $K$ let $K[A]=\{f[A]\mid f\in K\}$. For two functions $f$ and $g$ let the {\em   composition} of $f$ and $g$ be the function $f\circ g$ with $\dom(f\circ g)=\{x\in \dom(g)\mid g(x)\in \dom(f)\}$ and so that  if $x\in \dom(f\circ g)$ then $f\circ g(x)=f(g(x))$.

Let $\mathrm{G}$  be  a subgroup of the  symmetric group $\mathfrak{S}(U)$  of a countable  set $U$.   Then  $\overG$ is the set of  functions adherent to $\mathrm{G}$ and $\overG[U]$ is the set of images of the functions  in $\overG$.   It follows that $\overG$ is a set of injections of $U$ into $U$ closed under composition and that $\overG[U]\subseteq \powerset(U)$ and that $(\overG[U]; \subseteq)$ is a partially ordered set  under  the inclusion order 
 $\subseteq$. For a simple minded example, if $\mathrm{G}:= \mathfrak S(U)$ then $
 \overG$  is the set of one-to-one maps from $U$ to $U$.  In this case, $\overline{\mathrm{G}}[U]$ is the set of subsets of $U$ with the same cardinality as $U$.  The set of functions   $f\in \overG$ with $f[U]=U$, that is with $f\in  \mathfrak{S}(U)$ forms  a subgroup of $\mathfrak{S}(U)$. The group $\mathrm{G}$ is {\em closed} if $\mathrm{G}=\overG\cap \mathfrak{S}(U)$. The set of images $\overG[U]=\{g[U]\mid g\in \overG \}$ of the functions in $\overG$ will be called the {\em set of copies for} the group $\mathrm{G}$. For a more detailed background of this and towards some established notions  discussed in the next paragraph and further on    we refer the reader to the first about forty pages of \cite{cameronbook}.

The notions for subgroups of $\mathfrak{S}(U)$  investigated in this article are closely related to notions arising in the study of automorphism groups of relational structures. It is not difficult to verify that  $\aut(\mathrm{R})$ the group of automorphisms   of a relational structures $\mathrm{R}$  with countable domain $U$ is closed in $\mathfrak{S}(U)$  for the function topology. Let $\emb(\mathrm{R})$ be  the set of isomorphisms of $\mathrm{R}$ to induced substructures of $\mathrm{R}$.  Define a {\em copy  of }  $\mathrm{R}$ to be a subset $C$ of $U$ for which the  structure $\restrict{\mathrm{R}}{C}$ is isomorphic to $\mathrm{R}$, that is the range of a member of $\emb(\mathrm{R})$. Then  every  copy {\em for} the group $\aut(\mathrm{R})$ is a copy {\em of} $\mathrm{R}$. That is $\overline{\aut(\mathrm{R})}[U]$ is a subset of the set of copies of $\mathrm{R}$.   (A function with domain $U$ adherent to $\aut(\mathrm{R})$ is by definition an isomorphism.)  Let for example $\mathbb{N}:=(N;\leq)$ be the binary relational structure on the set of natural numbers with $\leq$ the natural order. The automorphism group, say $\mathrm{G}$, of $\mathbb{N}$ contains only the identity map on $N$. Hence the set of copies for the group $\mathrm{G}$ is the set $\{N\}$. On the other hand the set of copies of $\mathbb{N}$ has $2^{\aleph_0}$ elements. 

A  relational structure $\mathrm{R}$  is \emph{homogeneous}, see \cite{fraisse54} or \cite{Fra}, if every  isomorphism between finite induced substructures  of $\mathrm{R}$  extends to an automorphism of $\mathrm{R}$. It follows that if $\mathrm{R}$ is homogeneous then $\overline{\aut(\mathrm{R})}[U]$ is equal to  the set of copies of $\mathrm{R}$. It is well known, using the standard back and forth argument, that the order structure $\mathbb{Q}$ of the set of rational numbers  $Q$ is homogeneous. Implying that every copy of $\mathbb{Q}$ is also a copy for the automorphism group of $\mathbb{Q}$.  Let $\mathrm{G}$ be a subgroup of $\mathfrak{S}(U)$ for a countable set $U$. Then the  canonical structure determined by $\mathrm{G}$, that is the relational structure with domain $U$ whose relations on $n$-tuples are determined by the orbits of the action of $\mathrm{G}$ on the set of  $n$-tuples, is homogeneous. See  \cite{cameronbook}; or \cite{Fra} for a more extensive exposition. In particular it follows along the discussion in Section 2.4 of \cite{cameronbook} that $\overG\cap {\mathfrak S}(U)$ is a Polish group, see \cite{Kechris},   with respect to the function topology and composition as group operation.

A  relational structure $\mathrm{R}$ with domain $U$  is \emph{prehomogeneous},  (see \cite{pabion}),  if for every finite set $F\subseteq U$  there is a finite set $F'\supseteq F$   such that every  isomorphism $f$ of $\restrict{\mathrm{R}}{F}$ to  $\restrict{\mathrm{R}}{f[F]}$ does then   extend  to an automorphism,  provided  that it extends to $F'$. If the cardinality  of $F'$ can be  bounded by some function $\theta$ depending on the cardinality of $F$ then $\mathrm{R}$ is \emph{uniformly prehomogeneous}. It is the  case that if a relational structure $\mathrm{R}$ is homogeneous or prehomogeneous then $\emb(\mathrm{R})= \overline{\aut(\mathrm{R})}$ and the set of copies of $\mathrm{R}$ is equal to the set of copies $\overline{\aut(\mathrm{R})}[U]$ for the group $\aut(\mathrm{R})$. 
Actually,  if $\aut(\mathrm{R})$ is oligomorphic and $\emb(\mathrm{R})= \overline{\aut(\mathrm{R})}$ then $\mathrm{R}$ is prehomogeneous, see~\cite{siblings}  Theorem~2.2. A  group $\mathrm{G}\subseteq \mathfrak{S}(U)$  being \emph{oligomorphic} if for every  $n$ the number of orbits of $n$-element subsets of $U$ is finite. This amounts to the fact, see \cite{ryll},  that the group $\overline{\mathrm{G}}\cap {\mathfrak S}(U)$ is the automorphism group of an $\aleph_0$-categorical theory.

\section{An outline of the results}

Results on the  theme of copies of an homogeneous structure  have been developped in a series of  papers by Kurili\'c, Kuzeljevi\'c and Todor\u{c}evi\'c (e.g. \cite{kurilic1, kurilic2, kurilic-todorcevic, kurilic-kuzeljevic}).  Some of the results presented here have been the subject of a lecture  by the third author in March 2019 at the FG1 Seminar of the Technical University of Vienna \cite{sauer}.

For this section let  $U$ be a countable  set and let $\mathrm{G}$ be a subgroup of the symmetric group $\mathfrak{S}(U)$.

\begin{thm} \label{thm:oligomorphic}[Theorem \ref{thm:Woligomorphic2}]
If the group $\mathrm{G}$ is oligomorphic then  there is an embedding of $(\powerset (\omega);\subseteq)$, the power set of $\omega$ ordered by  inclusion,  into  $(\overline{\mathrm{G}}[U];\subseteq)$, the set of copies for the group $\mathrm{G}$,  ordered by inclusion.
\end{thm}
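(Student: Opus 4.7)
The plan is first to reduce to the canonical setting. Let $R$ be the canonical relational structure on $U$ determined by $\overline{\mathrm G}\cap\mathfrak S(U)$: its automorphism group is $\overline{\mathrm G}\cap\mathfrak S(U)$ and it is homogeneous. Homogeneity gives $\overline{\mathrm G}=\emb(R)$, so $\overline{\mathrm G}[U]$ is exactly the set of copies of $R$ in itself; oligomorphicity of $\mathrm G$ means $R$ is $\aleph_0$-categorical, so by Ryll-Nardzewski the pointwise stabilizer $\aut(R)_{(F)}$ of any finite $F\subseteq U$ has only finitely many orbits on $U$. The task thus becomes embedding $(\powerset(\omega),\subseteq)$ into the poset of copies of $R$ in $R$.

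The key step is to exhibit a pairwise disjoint family $(V_n)_{n<\omega}$ of infinite subsets of $U$ such that for every non-empty $A\subseteq\omega$ the union $W_A:=\bigcup_{n\in A}V_n$ is a copy of $R$ in $R$. Granting this, define $\phi:\powerset(\omega)\to\overline{\mathrm G}[U]$ by $\phi(A):=V_0\cup\bigcup_{n\in A}V_{n+1}$. Each $\phi(A)$ equals $W_C$ for the non-empty set $C=\{0\}\cup\{n+1:n\in A\}$, hence is a copy of $R$. Pairwise disjointness of the $V_n$ gives $\phi(A)\subseteq\phi(B)\iff A\subseteq B$, so $\phi$ is the desired order embedding.

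The real work is constructing the $V_n$. By homogeneity and $\aleph_0$-categoricity, a subset $W\subseteq U$ is a copy of $R$ iff it is \emph{extension-complete}: for every finite $F\subseteq W$ and every $\aut(R)_{(F)}$-orbit $O$ on $U\setminus F$, one has $O\cap W\neq\emptyset$. Oligomorphicity makes the entire collection of such extension requirements countable. One then constructs the $V_n$ by a standard back-and-forth bookkeeping argument, at each stage placing one element of $U$ into some $V_n$ while also adding, into each $V_n$, a fresh witness for a pending requirement; homogeneity of $R$ ensures that individual requirements are always realizable by many unused points. The chief obstacle is arranging the schedule so that extension-completeness holds for every infinite $A\subseteq\omega$, not merely for $A=\omega$. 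This is delicate when $O$ is a finite orbit over $F$, since then all of $O$ must lie in $W_A$ whenever $F\subseteq W_A$, forcing each such $O$ to be placed inside $\bigcup\{V_m:V_m\cap F\neq\emptyset\}$. Because these coherence conditions are countable and, by oligomorphicity, only finitely many are active at any finite stage, a careful diagonalization delivers the required family.
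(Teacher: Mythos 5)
Your overall target is right, but the key step on which everything rests is false: for a general oligomorphic group there need not exist even two disjoint copies whose union is again a copy, so no pairwise disjoint family $(V_n)_{n<\omega}$ with all unions $W_A$ copies can exist, and no bookkeeping or diagonalization will produce one. The paper's own example (the one used to show that $\mathfrak{ac}$ lacks the exchange property) is a counterexample: let $U:=[\N]^2$ and let $\mathrm{G}$ be the group of permutations of $U$ induced by permutations of $\N$. This group is oligomorphic, every member of $\overG$ is induced by an injection of $\N$ into itself, and hence every copy has the form $[M]^2$ for an infinite $M\subseteq\N$. If $V_0=[M_0]^2$ and $V_1=[M_1]^2$ are disjoint copies then $|M_0\cap M_1|\le 1$; choose distinct $b,b''\in M_0\setminus M_1$ and distinct $c,c''\in M_1\setminus M_0$ and set $F:=\{\{b,b''\},\{c,c''\}\}\subseteq V_0\cup V_1$. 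The orbit of $x:=\{b,c\}$ under $\mathrm{G}\langle F\rangle$ is the four-element set $\{\{b,c\},\{b,c''\},\{b'',c\},\{b'',c''\}\}$ of cross pairs, which is disjoint from $[M_0]^2\cup[M_1]^2$. By Theorem \ref{thm:characterization} (equivalently your extension-completeness criterion, this being a finite orbit over $F$ that any copy containing $F$ must swallow whole), $V_0\cup V_1$ is not a copy. So the ``coherence conditions'' you flag at the end are not merely delicate: they are an outright obstruction, already at $|A|=2$.

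The fix is to abandon disjointness and aim only for an order embedding $A\mapsto C_A$ with $C_A\subseteq C_B$ if and only if $A\subseteq B$. In the example above, $A\mapsto\bigl[M_0\cup\bigcup_{n\in A}N_n\bigr]^2$ for pairwise disjoint infinite $M_0,N_0,N_1,\dots\subseteq\N$ does the job, and the resulting copies are far from being unions of disjoint pieces. The paper achieves this in general by a model-theoretic route: Skolemize the $\aleph_0$-categorical theory and apply the Ehrenfeucht--Mostowski theorem to realize $\mathbb{Q}$ as a set of order indiscernibles; the Skolem hull $\mathfrak{M}'_X$ of each $X\subseteq\mathbb{Q}$ is an elementary substructure, hence by categoricity a copy, the assignment is monotone, and indiscernibility gives $\mathfrak{M}'_X\cap\mathbb{Q}=X$, so the map is order-reflecting. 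A strategy in the spirit of yours does succeed under the additional hypothesis that every orbit of every stabilizer $\mathrm{G}\langle F\rangle$ on $U\setminus F$ is infinite (this is essentially Lemma \ref{lem:bernstein}), but that hypothesis excludes precisely the algebraicity phenomena that defeat your construction.
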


For $F$ a finite subset of $U$ let $\mathrm{G}\langle F\rangle=\{g\in \mathrm{G}\mid \forall x\in F\,  (g(x)=x)\}$ and let  $\mathfrak{ac}(F)$, the {\em algebraic closure} of $F$,  be the union of all finite point orbits of the stabilizer group $\mathrm{G}\langle F\rangle$. If $S\subseteq U$ is infinite let $\mathfrak{ac}(S):=\bigcup \{\mathfrak{ac}(F)\mid F\in \powerset_{<\omega}(S)\}$.  Note that  if $\mathrm{G}$ is oligomorphic then $\mathfrak{ac}(F)$ is finite for every finite subset $F$ of $U$. A group $G$ of permutations of $U$ is \emph{algebraically finite} if the algebraic closure of every finite subset of $U$ is finite. Oligomorphic groups are algebraically finite. There are many others.

\begin{thm}\label{thm:twocop}[Theorem \ref{thm:constrpairextbas}]  
If $\mathrm{G}$ is algebraically finite then for every finite subset $F$ of $U$ the algebraic closure of $F$,  $\mathfrak{ac}(F)$, is the intersection of two copies for the group  $\mathrm{G}$.  
\end{thm}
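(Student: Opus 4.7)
Set $A_0 := \mathfrak{ac}(F)$. Algebraic finiteness makes $A_0$ finite, and a routine two-step argument yields the idempotency $\mathfrak{ac}(A_0) = A_0$: any $y$ with a finite $\mathrm{G}\langle B\rangle$-orbit for some finite $B \subseteq A_0$ has a finite $\mathrm{G}\langle F\rangle$-orbit, since each element of $B$ already lies in a finite $\mathrm{G}\langle F\rangle$-orbit. The first ingredient is the observation that \emph{every copy containing $F$ already contains $A_0$}: if $h \in \overG$ with $F \subseteq h[U]$, let $F' := h^{-1}[F]$ (finite, since $h$ is injective) and invoke adherence on the finite set $F' \cup \mathfrak{ac}(F')$ to obtain $g \in \mathrm{G}$ with $g|_{F' \cup \mathfrak{ac}(F')} = h|_{F' \cup \mathfrak{ac}(F')}$; then $h[\mathfrak{ac}(F')] = g[\mathfrak{ac}(F')] = \mathfrak{ac}(g[F']) = \mathfrak{ac}(F)$, whence $\mathfrak{ac}(F) \subseteq h[U]$. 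Consequently it suffices to produce two elements $h_1, h_2 \in \overG$ each restricting to the identity on $A_0$ with $h_1[U] \cap h_2[U] = A_0$.

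Construct $h_1, h_2$ by a back-and-forth inside the canonical homogeneous structure $\mathrm{R}$ associated with $\mathrm{G}$, using $\emb(\mathrm{R}) = \overG$ and the fact that a finite map is a restriction of some element of $\overG$ precisely when it is a partial isomorphism of $\mathrm{R}$. Enumerate $U = \{u_n : n \ge 1\}$ and build partial isomorphisms $p_1^n, p_2^n$ of $\mathrm{R}$ with finite algebraically closed domains $D_i^n$ and ranges $E_i^n$, maintaining the invariants (i) $p_i^n|_{A_0} = \mathrm{id}$, (ii) $\{u_1, \ldots, u_n\} \subseteq D_1^n \cap D_2^n$, and (iii) $E_1^n \cap E_2^n = A_0$. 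At stage $n$, when $u_n \notin D_1^{n-1}$, algebraic closedness forces $u_n \notin \mathfrak{ac}(D_1^{n-1})$, so the type of $u_n$ over $D_1^{n-1}$ has infinitely many realizations over $E_1^{n-1}$. Select an image $w_1$ avoiding $E_1^{n-1} \cup (E_2^{n-1} \setminus A_0)$, subject further to the sharper condition $\mathfrak{ac}(E_1^{n-1} \cup \{w_1\}) \cap (E_2^{n-1} \setminus A_0) = \emptyset$, and then extend $p_1$ to $\mathfrak{ac}(D_1^{n-1} \cup \{u_n\})$ via homogeneity of $\mathrm{R}$. The treatment of $p_2$ is symmetric.

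The principal hurdle is verifying that the sharper condition excludes only finitely many candidate $w_1$, so that infinitely many acceptable choices remain. Decomposing the forbidden set over $x \in E_2^{n-1} \setminus A_0$, the task reduces to bounding $\{w : x \in \mathfrak{ac}(E_1^{n-1} \cup \{w\})\}$; since $x \notin E_1^{n-1} = \mathfrak{ac}(E_1^{n-1})$ by invariants and algebraic closedness, the type of $x$ over $E_1^{n-1}$ is non-algebraic. A weak-exchange statement — derived from algebraic finiteness via an analysis of how the $\mathrm{G}\langle E_1^{n-1}\rangle$-action propagates finite orbits on pairs $(x, w)$ — then bounds this set by $\mathfrak{ac}(E_1^{n-1} \cup \{x\})$, which is finite. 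The back-and-forth thus completes, and $h_i := \bigcup_n p_i^n$ are injective adherent maps with $\dom h_i = U$, $h_i|_{A_0} = \mathrm{id}$, and $h_1[U] \cap h_2[U] = A_0$, establishing that $\mathfrak{ac}(F)$ is the intersection of the two copies $h_1[U]$ and $h_2[U]$ for $\mathrm{G}$.
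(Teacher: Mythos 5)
Your reduction is sound and matches the paper's: every copy containing $F$ contains $\mathfrak{ac}(F)$, and it suffices to build two adherent maps fixing $A_0$ pointwise whose images meet exactly in $A_0$ (the paper reduces instead to the case $\mathfrak{ac}(\emptyset)=\emptyset$ by splitting off $\mathfrak{ac}(F)$, which is the same idea). The gap is in your ``principal hurdle''. The weak-exchange statement you invoke --- that $\{w : x\in \mathfrak{ac}(E\cup\{w\})\}\subseteq \mathfrak{ac}(E\cup\{x\})$ whenever $x\notin \mathfrak{ac}(E)$, hence that the forbidden set of candidates is finite --- is exactly the exchange property for $\mathfrak{ac}$, and the paper exhibits a counterexample at the end of Section~\ref{sect:algebcl} that falls squarely under the hypotheses of the theorem: $U=[\N]^2$ with $\mathrm{G}$ induced by $\mathfrak S(\N)$ is transitive and algebraically finite, yet $v\in\mathfrak{ac}(\{u,u'\})$ while $u'\notin\mathfrak{ac}(\{u,v\})$. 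Concretely, take $E=\{\{1,2\}\}$ and $x=\{1,3\}\notin\mathfrak{ac}(E)=E$; then $\{w: x\in\mathfrak{ac}(E\cup\{w\})\}=\{w: 3\in w\}$ is infinite. So at a stage of your back-and-forth the forbidden set can be infinite, your finiteness argument collapses, and you are left needing (without proof) that the forbidden set does not exhaust the relevant typeset. This is not a cosmetic issue: it is precisely the point where algebraic finiteness alone gives you no leverage and where the paper's construction does genuinely different work.

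The paper sidesteps exchange by never asking whether a newly chosen point creates algebraic dependencies over the other side. It carries along a descending sequence of \emph{copies} $C_n\supseteq \mathfrak{ac}(V_n)$ with $C_n\cap\mathfrak{ac}(W_n)=\emptyset$ (and symmetrically $D_n$); the new point $v_n$ is chosen inside $C_n\setminus\mathfrak{ac}(W_n)$, and then $\mathfrak{ac}(V_{n+1})\subseteq C_n$ comes for free because finite typesets with sockel inside a copy are contained in that copy (Corollary~\ref{cor:maptypef}), so disjointness from $\mathfrak{ac}(W_n)$ is automatic. The invariant is restored by shrinking $C_n$ to a copy $C_{n+1}\supseteq\mathfrak{ac}(V_{n+1})$ disjoint from $\mathfrak{ac}(W_n)$, which is Corollary~\ref{cor:interschcop}(1) and ultimately rests on the ranked-closure theorem $\mathfrak{rc}(F)=\mathfrak{ic}(F)$ (Theorem~\ref{thm:rcclinters}, via Lemma~\ref{lem:unrunion2} and Corollary~\ref{cor:in/out}). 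To repair your proof you would need either a correct replacement for the exchange step (showing the typeset is not covered by the bad set, which is essentially what the copy-shrinking mechanism delivers) or to import that mechanism outright.
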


\begin{problem} 
Is it true that $\mathfrak{ac}(S)$ is  an intersection of the set of  copies containing $S$  if $S$ is infinite and the group $\mathrm{G}$ is oligomorphic?
\end{problem}

\begin{problem} 
If $\mathrm{G}$ is algebraically finite, is it true that  $(\overline{\mathrm{G}}[U];\subseteq)$  embeds $(\powerset (\omega); \subseteq)$?
\end{problem}

\begin{thm}\label{thm:basictup}[Theorem \ref{thm:characterization}]  A subset $A$ of $U$ is a copy, that is belongs to $\overG[U]$,  if and only if for every finite subset $F$ of $A$  and every $x\in U\setminus F$ there is a function $g\in \mathrm{G}\langle F\rangle$ with $g(x)\in A$. 
\end{thm}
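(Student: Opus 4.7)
The plan is to prove the two directions separately. The forward direction reduces to a composition trick in $\mathrm{G}$; the backward direction is a standard back-and-forth construction.

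For the forward direction, suppose $A = f[U]$ with $f \in \overline{\mathrm{G}}$, and fix a finite $F \subseteq A$ and $x \in U \setminus F$. The naive attempt of choosing a single $h \in \mathrm{G}$ that agrees with $f$ on $f^{-1}[F]\cup\{x\}$ sends $f^{-1}[F]$ onto $F$ rather than fixing $F$ pointwise, so one corrects this by composing two approximations. Set $E := f^{-1}[F]$, which is finite by injectivity of $f$. Pick $h_2 \in \mathrm{G}$ agreeing with $f$ on $E$, so $h_2[E] = F$, and let $x' := h_2^{-1}(x)$; note $x' \notin E$, for otherwise $x = h_2(x') \in F$. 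Next pick $h_1 \in \mathrm{G}$ agreeing with $f$ on $E \cup \{x'\}$, and put $g := h_1 \circ h_2^{-1} \in \mathrm{G}$. For $z \in F$, $h_2^{-1}(z) \in E$ and $h_1$ agrees with $h_2$ on $E$, so $g(z) = h_1(h_2^{-1}(z)) = h_2(h_2^{-1}(z)) = z$; moreover $g(x) = h_1(x') = f(x') \in A$, producing the required witness in $\mathrm{G}\langle F\rangle$.

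For the backward direction, assume the condition and aim for $A = f[U]$ with $f \in \overline{\mathrm{G}}$. First observe that $A$ is infinite: given distinct $y_0,\ldots,y_n\in A$, the hypothesis applied to $F = \{y_0,\ldots,y_n\}$ and any $x\in U\setminus F$ produces $g \in \mathrm{G}\langle F\rangle$ with $g(x)\in A$, and $g(x)\notin F$ since $g$ fixes $F$ and is injective, giving a new $y_{n+1}$. Enumerate $U = \{u_n : n<\omega\}$ and $A = \{a_n : n<\omega\}$, and build a chain of finite partial injections $\sigma_0 \subseteq \sigma_1 \subseteq \cdots$ with range inside $A$, each extending to some $g_n \in \mathrm{G}$, and arranging $u_n \in \dom(\sigma_{n+1})$ and $a_n \in \sigma_{n+1}[\dom(\sigma_{n+1})]$.

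At a forward step, if $u_n \notin \dom(\sigma_n)$, let $F := \sigma_n[\dom(\sigma_n)]\subseteq A$ and let $g_n \in \mathrm{G}$ be a witness for $\sigma_n$; since $g_n$ is a bijection sending $\dom(\sigma_n)$ onto $F$, we have $g_n(u_n)\in U\setminus F$. The hypothesis supplies $h\in\mathrm{G}\langle F\rangle$ with $h(g_n(u_n))\in A$; then $h\circ g_n \in \mathrm{G}$ still extends $\sigma_n$ (since $h$ fixes $F$) and sends $u_n$ into $A$, so one extends $\sigma_n$ by the pair $(u_n, h(g_n(u_n)))$. At a back step, if $a_n$ is not yet in the range, let $g'$ be the current witness and extend by the pair $((g')^{-1}(a_n), a_n)$, which lies outside the current domain precisely because $a_n$ lies outside the current range. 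Then $f := \bigcup_n \sigma_n$ is an injection from $U$ onto $A$ that agrees on each finite subset with some $g_n \in \mathrm{G}$, so $f \in \overline{\mathrm{G}}$ and $A \in \overline{\mathrm{G}}[U]$.

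The main obstacle is the composition trick in the forward direction; once that is identified, the backward direction follows the standard back-and-forth template, with the only novelty being the use of the hypothesis to steer each forward step into $A$ via a suitable stabilizer element.
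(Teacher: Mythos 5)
Your proof is correct and takes essentially the same route as the paper: your forward direction is exactly the composition trick $g\circ h^{-1}$ underlying Lemma \ref{lem:fix-copy} (produce an element of $\overline{\mathrm{G}}\langle F\rangle$ with image $A$, then approximate on $F\cup\{x\}$), inlined rather than factored through that lemma, and your backward direction is the same back-and-forth the paper uses. The only addition is your explicit check that the hypothesis forces $A$ to be infinite, which the paper leaves tacit but which is a correct and harmless refinement.
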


Using  Theorem \ref{thm:basictup} we obtain: 
\begin{cor}\label{cor:basictyp}[Lemma \ref{lem:updirected},    Lemma \ref{lem:Gsubdelta}, Corollary \ref{lem:isolated}]
The set of copies $\overline{\mathrm{G}}[U]$ is closed under the union of non-empty up-directed families.  Topologically, the set of copies  $\overline{\mathrm{G}}[U]$ is  a $\mathrm{G}_{\delta}$-set of $\powerset (U)$ equipped with the powerset topology. (See Definition \ref{defin:contpowerbrr}.) Moreover, if $\overline{\mathrm{G}}[U]$ contains more than one copy, then it has no isolated point with respect to the powerset topology.   
\end{cor}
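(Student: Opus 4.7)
The plan is to lean on Theorem \ref{thm:basictup} as the workhorse for the first two claims and to exploit that $\overline{\mathrm{G}}$ is a set of injections closed under composition for the third.

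For closure under up-directed unions, let $\mathcal{D}$ be a non-empty up-directed family in $\overline{\mathrm{G}}[U]$ and set $A := \bigcup \mathcal{D}$. Given any finite $F \subseteq A$ and $x \in U \setminus F$, applying up-directedness finitely many times produces a single $D \in \mathcal{D}$ with $F \subseteq D$. Since $D$ is a copy, Theorem \ref{thm:basictup} supplies $g \in \mathrm{G}\langle F\rangle$ with $g(x) \in D \subseteq A$. Another application of Theorem \ref{thm:basictup} then yields $A \in \overline{\mathrm{G}}[U]$.

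For the $\mathrm{G}_\delta$ property, rewrite Theorem \ref{thm:basictup} as: $A \in \overline{\mathrm{G}}[U]$ if and only if for every finite $F \subseteq U$ and every $x \in U \setminus F$, either $F \not\subseteq A$ or some $g \in \mathrm{G}\langle F\rangle$ satisfies $g(x) \in A$. For fixed $(F,x)$, the first disjunct defines the set $\bigcup_{y \in F} \{A : y \notin A\}$ and the second defines $\bigcup_{g \in \mathrm{G}\langle F\rangle} \{A : g(x) \in A\}$; both are open in the powerset topology as unions of clopens. Each pair $(F,x)$ therefore cuts out an open subset of $\powerset(U)$, and since $U$ is countable, $\overline{\mathrm{G}}[U]$ is the intersection of countably many such opens, hence $\mathrm{G}_\delta$.

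For the absence of isolated points, assume $|\overline{\mathrm{G}}[U]| \geq 2$; since $U = \id[U] \in \overline{\mathrm{G}}[U]$, fix a copy $A_0 \subsetneq U$. Given $A \in \overline{\mathrm{G}}[U]$ and finite $F \subseteq U$, write $A = f[U]$ and $A_0 = h[U]$ with $f, h \in \overline{\mathrm{G}}$. By definition of the topological closure there is $g \in \mathrm{G}$ agreeing with $h$ on the finite set $f^{-1}[F \cap A]$, giving $f^{-1}[F \cap A] \subseteq g^{-1}[A_0]$. The set $C := g^{-1}[A_0]$ is a copy (since $\overline{\mathrm{G}}$ is closed under composition) with $C \subsetneq U$, because $g$ is a bijection and $A_0 \neq U$. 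Put $B := f[C]$, again a copy. Injectivity of $f$ yields $B \subsetneq A$, so $B \neq A$; and $f[f^{-1}[F \cap A]] = F \cap A$ together with $B \subseteq A$ gives $B \cap F = A \cap F$, placing $B$ in the basic neighborhood of $A$ determined by $F$.

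The main obstacle in (3) is producing a copy that is simultaneously different from $A$ and agrees with $A$ on the prescribed finite set $F$. Translating $A$ by some $\sigma \in \mathrm{G}\langle F\rangle$ would keep $\sigma[A] \cap F = A \cap F$ since $\sigma$ fixes $F$ pointwise and is injective; but an isolation hypothesis via $F$ would then force $\sigma[A] = A$, so this approach self-defeats. The workaround is to shrink $A$ rather than translate it: the constraint $B \cap (F \setminus A) = \emptyset$ becomes automatic once $B \subseteq A$, and preserving $F \cap A$ inside $B$ reduces through $f$ to finding a proper sub-copy of $U$ containing a prescribed finite set, which is guaranteed by the mere existence of any copy other than $U$.
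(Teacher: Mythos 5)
Your proposal is correct and follows essentially the same route as the paper: parts (1) and (2) are the paper's arguments (the paper merely indexes the countable intersection of open sets by types $\langle F\mid p\rangle$ rather than by pairs $(F,x)$, which yields the same open sets), and part (3) rests on the same mechanism of transporting a proper copy of $U$ into $A$ via some $f\in\overline{\mathrm{G}}$ with $f[U]=A$ while arranging agreement on the finite set $f^{-1}[F\cap A]$. The only difference is organizational: the paper proves (3) by contradiction, routing through Lemma \ref{lem:onecopy}, Lemma \ref{lem:justonecopy} and Lemma \ref{lem:fix-copy}, whereas you inline that machinery into a direct construction of the nearby proper sub-copy $B=f\bigl[g^{-1}[A_0]\bigr]$ --- both are fine.
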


Corollary  \ref{cor:basictyp} will then be used to establish that:

\begin{thm}\label{thm:introcard}[Theorem \ref{them:cardcopies}] The cardinality of $\overline{\mathrm{G}}[U]$ is either $1$ or $2^{\aleph_0}$. 
\end{thm}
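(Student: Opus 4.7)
The plan is to reduce the statement to classical facts about Polish spaces, invoking the structural properties already collected in Corollary \ref{cor:basictyp}. The upper bound $|\overline{\mathrm{G}}[U]| \leq 2^{\aleph_0}$ is automatic, since $U$ is countable and $\overline{\mathrm{G}}[U] \subseteq \powerset(U)$. Moreover $\overline{\mathrm{G}}[U]$ is never empty: it contains $U$ itself, realized by the identity map in $\overline{\mathrm{G}}$. Thus the task is to show that as soon as $\overline{\mathrm{G}}[U]$ has at least two elements, it must have cardinality $2^{\aleph_0}$.

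Suppose then that $\overline{\mathrm{G}}[U]$ contains more than one copy. Corollary \ref{cor:basictyp} supplies two properties: $\overline{\mathrm{G}}[U]$ is a $\mathrm{G}_{\delta}$-subset of $\powerset(U)$ in the powerset topology, and it has no isolated point. The ambient space $\powerset(U)$, being homeomorphic to $2^{\omega}$, is Polish; by Alexandrov's theorem, a $\mathrm{G}_{\delta}$-subspace of a Polish space is itself Polish in the induced topology. Hence $\overline{\mathrm{G}}[U]$, with its relative topology, is a nonempty Polish space without isolated points.

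At this point one invokes the classical theorem that every nonempty perfect Polish space contains a homeomorphic copy of the Cantor space, and in particular has cardinality exactly $2^{\aleph_0}$. Combined with the upper bound, this settles the dichotomy.

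If one prefers a self-contained argument, the same conclusion can be reached by a direct Cantor-scheme construction inside $\overline{\mathrm{G}}[U]$. Writing $\overline{\mathrm{G}}[U] = \bigcap_{n<\omega} V_n$ with each $V_n$ open in $\powerset(U)$, one builds a dyadic tree of basic clopen neighborhoods $(W_s)_{s \in 2^{<\omega}}$ so that the two immediate children of each node $s$ are disjoint subneighborhoods of $W_s$, are contained in $V_{|s|+1}$, and each $W_s$ meets $\overline{\mathrm{G}}[U]$. The split step at a given node is precisely where the no-isolated-point hypothesis is used, applied to any chosen copy in $W_s \cap \overline{\mathrm{G}}[U]$: since that copy is not isolated, any neighborhood of it contains another copy, which can be separated from the chosen one by a pair of disjoint basic clopen sets. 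Each infinite branch then determines a distinct point of $\bigcap_n V_n = \overline{\mathrm{G}}[U]$, yielding $2^{\aleph_0}$ copies. The only delicate point is arranging the branching step so that both descendants still meet $\overline{\mathrm{G}}[U]$; this is routine once no-isolated-points is available, and is the sole place where the hypothesis $|\overline{\mathrm{G}}[U]|\geq 2$ is genuinely used.
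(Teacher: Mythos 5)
Your proof is correct and follows essentially the same route as the paper: it invokes Corollary \ref{lem:isolated} (no isolated points when there is more than one copy) together with Lemma \ref{lem:Gsubdelta} ($\mathrm{G}_{\delta}$ in the Polish space $\powerset(U)$), and then applies the perfect set property for Polish spaces, which is exactly the Theorem (6.2) of \cite{Kechris} cited in the paper's proof. The additional explicit Cantor-scheme construction is a fine self-contained substitute but adds nothing beyond the classical theorem already being used.
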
  

\begin{problem} Is it true that if $\overline{\mathrm{G}}[U]$ has more than  one element then the partial ordered set  $(\overline{\mathrm{G}}[U];\subseteq)$  is not totally ordered by inclusion? Is it true that it embeds $(\powerset (\omega);\subseteq)$?
\end{problem}

In  Section \ref{sect:rankedcl} we will    introduce the notions of being ranked or unranked for point orbits of the stabilizer subgroups $\mathrm{G}\langle F\rangle$ for finite subsets $F$ of $U$. Assigning an ordinal number rank to the ranked point orbits of those stabilizer subgroups. Leading  to the notion  of  {\em ranked closure} $\mathfrak{rc}(S)$  of $S$, for  subsets $S$ of $U$.  It turns out, Lemma~\ref{lem:locfinacrc},  that if the group $\mathrm{G}$ is algebraically finite then $\mathfrak{rc}(S)=\mathfrak{ac}(S)$  for every  subset $S$ of $U$.  

\begin{thm}\label{thm:outlnem}[Theorem \ref{thm:rcclinters}]  The ranked closure of a finite set $F$ is equal to  the intersection of all copies containing $F$. There is just one copy, namely $U$,  for a group $\mathrm{G}\subseteq \mathfrak{S}(U)$, if and only if   the ranked closure of the empty set is $U$ if and only if every point orbit of the group $\mathrm{G}$ is ranked. 
\end{thm}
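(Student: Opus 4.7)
The statement consists of two assertions: an equality $\mathfrak{rc}(F) = \bigcap \{C \in \overline G[U] : F \subseteq C\}$ for finite $F \subseteq U$, and a three-way equivalence characterizing when $U$ is the only copy. The plan is to prove the equality of Part 1 by a double inclusion, arguing one direction by transfinite induction on rank and the other by a back-and-forth construction, and then to read off Part 2 by setting $F = \emptyset$.

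For the easy inclusion $\mathfrak{rc}(F) \subseteq \bigcap \{C \in \overline G[U] : F \subseteq C\}$, I would induct on the ordinal rank attached to a point's orbit under $\mathrm{G}\langle F\rangle$. At rank $0$, the element $x$ is fixed by $\mathrm{G}\langle F\rangle$; for any copy $C$ containing $F$, Theorem \ref{thm:basictup} supplies $g \in \mathrm{G}\langle F\rangle$ with $g(x) \in C$, and $g(x) = x$ then places $x$ in $C$. For the inductive step, unfold the definition of a rank $\alpha$ orbit: there exists a finite parameter set $F' \supseteq F$ consisting of points of rank strictly less than $\alpha$ over $F$, such that $x$ is of rank strictly less than $\alpha$ over $F'$. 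Applying the inductive hypothesis first to the elements of $F' \setminus F$ gives $F' \subseteq C$, then applying it again with $F'$ taking the role of $F$ puts $x$ in $C$.

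The reverse inclusion is the substantive part. Given $x \in U \setminus \mathfrak{rc}(F)$, we must produce a single copy $C$ with $F \subseteq C$ and $x \notin C$. The plan is a back-and-forth construction governed by the criterion of Theorem \ref{thm:basictup}. Enumerate all pairs $(F', y)$ with $F' \in \finp{U}$ and $y \in U$, and construct an increasing chain $F = A_0 \subseteq A_1 \subseteq \cdots$ of finite sets with $x \notin A_n$ for every $n$; at the stage where a pair $(F', y)$ with $F' \subseteq A_n$ is processed, pick some $g \in \mathrm{G}\langle F'\rangle$ with $g(y) \neq x$ and set $A_{n+1} = A_n \cup \{g(y)\}$. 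Then $C := \bigcup_n A_n$ satisfies the criterion of Theorem \ref{thm:basictup} and is therefore a copy. The main obstacle, as in most such constructions, is ensuring that the construction never gets stuck, i.e.\ that at every stage the orbit of $y$ under $\mathrm{G}\langle F'\rangle$ is not reduced to $\{x\}$. This is where the hypothesis $x \notin \mathfrak{rc}(F)$ must be leveraged through a preservation principle for $\mathfrak{rc}$ (presumably developed in Section \ref{sect:rankedcl}): unrankedness of $x$ over $F$ should persist under enlargements of the parameter set by elements we are free to choose outside the $\mathrm{G}\langle F\rangle$-orbit of $x$, guaranteeing that $x$ never becomes forced into $C$ and that the choice of $g$ is always possible.

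Finally, Part 2 follows by specializing Part 1 to $F = \emptyset$. The identity function belongs to $\overline G$, so $U \in \overline G[U]$, and hence $U$ is the only copy precisely when $\bigcap \overline G[U] = U$. Part 1 identifies this intersection with $\mathfrak{rc}(\emptyset)$, yielding the equivalence of ``$U$ is the only copy'' with $\mathfrak{rc}(\emptyset) = U$. The equivalence with ``every point orbit of $\mathrm{G}$ is ranked'' is then immediate from the definition of $\mathfrak{rc}$, since $\mathrm{G}\langle \emptyset \rangle = \mathrm{G}$ and a point $u$ lies in $\mathfrak{rc}(\emptyset)$ precisely when its $\mathrm{G}$-orbit is ranked.
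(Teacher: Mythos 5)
Your overall architecture (an induction on rank for $\mathfrak{rc}(F)\subseteq\mathfrak{ic}(F)$, a recursive construction of a copy omitting $x$ for the converse, and specialization to $F=\emptyset$ for the second assertion) matches the paper's, and your Part 2 is correct. But both halves of Part 1 have genuine gaps. In the easy inclusion, your base case misstates rank $0$: it means the typeset $\mathrm{G}\langle F\tr x\rangle$ is \emph{finite}, not that $x$ is fixed by $\mathrm{G}\langle F\rangle$; one needs the argument of Corollary \ref{cor:maptypef} (an embedding onto $C$ fixing $F$ maps the finite orbit injectively into itself, hence onto itself). More seriously, your inductive step misreads the definition of rank $\alpha$: the witness set $F'\supseteq F$ is an arbitrary finite subset of $U$ — it is \emph{not} required to consist of points of rank $<\alpha$ over $F$ (e.g.\ for $\aut(\mathbb Z,\le)$ the orbit of $0$ has rank $1$ witnessed by $F'=\{0\}$, and $0$ itself has rank $1$, not $0$). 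So you cannot conclude $F'\subseteq C$ from the inductive hypothesis. The paper's Lemma \ref{lem:contall} repairs exactly this by replacing $F'$ with $g[F']\subseteq C$, where $g\in\mathrm{G}\langle F\rangle$ agrees on $F'$ with an embedding of $U$ onto $C$, and then invoking invariance of rank under $\mathrm{G}$ (Lemma \ref{lem:rankext}).

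In the reverse inclusion, the entire difficulty is concentrated in the ``preservation principle'' you defer to Section \ref{sect:rankedcl}; no such statement appears there, and as you phrase it (``enlargements by elements outside the $\mathrm{G}\langle F\rangle$-orbit of $x$'') it is not even the condition you need, since the orbit $T$ you are forced to hit at a given stage may be the $\mathrm{G}\langle F'\rangle$-orbit of $x$ itself. The obstruction is that unrankedness of $\langle A_n\tr x\rangle$ only guarantees (property \textbf{(p)}) an unranked continuation $\langle A_n\cup\{z_0\}\tr q\rangle$ for \emph{some} $q$ in the orbit of $x$, not for $q=x$; so adding an arbitrary $z_0\ne x$ can make $x$ ranked over $A_{n+1}$ and hence force $x$ into every copy containing $A_{n+1}$. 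The principle you need is true — take $g\in\mathrm{G}\langle A_n\rangle$ with $g(q)=x$ and add $z:=g(z_0)$, which still lies in $T$ and keeps $\langle A_n\cup\{z\}\tr x\rangle$ unranked by Lemma \ref{lem:rankext} — but this pull-back is the heart of the proof and must be supplied. The paper's Lemma \ref{lem:unrunion2} does the equivalent bookkeeping differently: it lets the distinguished point drift along a chain of unranked continuations $\langle F\cup U_n\tr x_n\rangle$ and composes the correcting group elements into a limit $l\in\overG$ with $F\subseteq l[U]$ and $x\notin l[U]$, producing the copy directly as $l[U]$ rather than verifying the criterion of Theorem \ref{thm:characterization} on a hand-built set.
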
 

We do not know if a more general version of  Theorem \ref{thm:twocop} holds for all  groups $\mathrm{G}$ with $\overG[U]\not=\{U\}$. The next Theorem,  being just of an enough strengthening of Theorem \ref{thm:outlnem} in order to establish  Theorem~\ref{thm:twocop},  does not seem to be sufficiently strong to lead to a generalization of Theorem \ref{thm:twocop}. Note that Theorem \ref{thm:twocop} implies that $\overG[U]\not=\{U\}$ if the group $\mathrm{G}$ is algebraically finite. 

 \begin{thm}\label{thm:interschcopi}[Theorem \ref{thm:interschcop}] 
  Let $U$ be a countable set and $\mathrm{G}$ be a subgroup of $\mathfrak S(U)$ and $\overG[U]\not=\{U\}$.  Then for every finite subset $F$ of $U$ and every copy $C\in \overG[U]$ with $F\subseteq C$ (hence $\mathfrak{rc}(F)\subseteq C$),  there exists: \\
  An $\omega$-sequence  
  $C=C_0\supset C_1\supset C_2\supset C_3\supset\dots$ of copies $C_i\in \overG[U]$ for which $\mathfrak{rc}(F)=\bigcap_{i\in \omega}C_i$.   
 \end{thm}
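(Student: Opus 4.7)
The plan is to construct the sequence by iterated refinement, reducing each step to Theorem~\ref{thm:rcclinters} applied inside $U$. The \emph{refinement principle} I will establish is: for every copy $D\in\overG[U]$ with $F\subseteq D$ and every $y\in D\setminus\mathfrak{rc}(F)$, there exists a copy $D'\subsetneq D$ with $F\subseteq D'$ and $y\notin D'$. Granted this, enumerate $C\setminus\mathfrak{rc}(F)=\{x_1,x_2,\dots\}$ and set $C_0:=C$; at stage $n\ge 1$, take $C_n$ to be a refinement of $C_{n-1}$ eliminating $x_n$ when $x_n\in C_{n-1}$, and set $C_n:=C_{n-1}$ otherwise. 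Since $\mathfrak{rc}(F)\subseteq C_n$ for every $n$ by Theorem~\ref{thm:rcclinters}, while each $x_n$ is forced out by a finite stage, $\bigcap_n C_n=\mathfrak{rc}(F)$.

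To prove the refinement principle I write $D=g[U]$ for some $g\in\overG$, and set $F_0:=g^{-1}[F]$, $y_0:=g^{-1}(y)$. It suffices to exhibit a copy $E\subseteq U$ with $F_0\subseteq E$ and $y_0\notin E$: then $D':=g[E]=(g\circ h)[U]\in\overG[U]$ for any $h\in\overG$ with $E=h[U]$, and $D'\subseteq D$, $F=g[F_0]\subseteq D'$, $y=g(y_0)\notin D'$, with strictness following from $y\in D\setminus D'$. By Theorem~\ref{thm:rcclinters}, such $E$ exists as soon as $y_0\notin\mathfrak{rc}(F_0)$. This will in turn follow from the key preservation statement: for every $g\in\overG$ and every finite $S\subseteq U$, $g[\mathfrak{rc}(S)]\subseteq\mathfrak{rc}(g[S])$. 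Indeed, $y_0\in\mathfrak{rc}(F_0)$ would give $y=g(y_0)\in g[\mathfrak{rc}(F_0)]\subseteq\mathfrak{rc}(g[F_0])=\mathfrak{rc}(F)$, contrary to the choice of $y$.

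The main obstacle is the preservation statement. Every $g\in\overG$ is an embedding of the canonical homogeneous structure associated to $\mathrm{G}$, so it carries each $\mathrm{G}\langle S\rangle$-orbit into a single $\mathrm{G}\langle g[S]\rangle$-orbit. Using the transfinite rank assignment from Section~\ref{sect:rankedcl}, I would argue by induction on orbit-rank that $g$ sends a ranked $\mathrm{G}\langle S\rangle$-orbit into a ranked $\mathrm{G}\langle g[S]\rangle$-orbit of no larger rank. The delicate step is the recursive clause of the rank definition, which refers to how an orbit decomposes after adjoining further finite parameters; verifying that this behavior transfers faithfully through $g$ amounts to the elementarity of embeddings of the canonical homogeneous structure into itself.
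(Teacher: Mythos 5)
Your argument is correct in outline and takes a genuinely different route from the paper's. The paper relativizes the whole rank apparatus to the copy $C_{n-1}$: it fixes $f\in \overG$ with $f[U]=C_{n-1}$, invokes the invariance of rank under passage to the group $f[\mathrm{G}]$ acting on $C_{n-1}$ (Lemma \ref{lem:contallinvariant}), applies Lemma \ref{lem:unrunion2} inside $C_{n-1}$ to an unranked type with sockel $F$, and transfers the resulting copy back via Lemma \ref{lem:invarcop}. You instead pull everything back to $U$ through $g^{-1}$, apply Theorem \ref{thm:rcclinters} in $U$ to get a copy $E$ omitting $y_0$, and push forward by $g$; this bypasses the relativized types $\langle F\tr p\rangle_C$ entirely, at the price of your ``preservation statement.''

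That statement is where you stopped short, but the obstacle you flag dissolves: no induction on rank and no elementarity is needed. Given $g\in\overG$, a finite $S$ and $p\in U$, choose $h\in\mathrm{G}$ with $\restrict{h}{(S\cup\{p\})}=\restrict{g}{(S\cup\{p\})}$; then $\langle g[S]\tr g(p)\rangle$ \emph{is} the type $\langle h[S]\tr h(p)\rangle$, which by Lemma \ref{lem:rankext} is ranked of the same rank as $\langle S\tr p\rangle$. Hence $g[\mathfrak{rc}(S)]\subseteq\mathfrak{rc}(g[S])$ (indeed with equality of ranks), and your contradiction $y\in\mathfrak{rc}(F)$ goes through. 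Two smaller points. First, as written your recursion only yields a weakly decreasing sequence (you set $C_n:=C_{n-1}$ when $x_n\notin C_{n-1}$), whereas the theorem asserts a strictly decreasing chain; fix this by eliminating at stage $n$ the least-indexed $x_j$ still in $C_{n-1}$, which exists because $C_{n-1}\neq\mathfrak{rc}(F)$ --- by Corollary \ref{cor:oenonfinempt}, $\mathfrak{rc}(F)$ is never a copy under the standing hypothesis --- and this bookkeeping still removes every $x_j$ eventually. Second, you should say a word on why $C\setminus\mathfrak{rc}(F)$ is nonempty (same corollary) so that the enumeration and the strict chain are not vacuous. With those repairs the proof is complete and arguably lighter than the paper's.
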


For the connection between  the Hausdorff rank of a linear order and the rank defined in this article, see   Subsection \ref{subs:linordcas}.  

Besides the set of copies $\overG[U]$, there are three related  sets of interest:   the set of intersections of copies  $\widehat{\overG[U]}$, the set $\overG^{\mathfrak{rc}}[U]$of ranked closures of arbitrary subsets of $U$ and the  topological closure $\overline{\overG[U]}$ of ${\overG[U]}$ in  $\powerset(U)$.

\begin{lem}\label{lem:equalityclosures}
$\overG[U]\subseteq \widehat{\overG[U]} \subseteq \overG^{\mathfrak{rc}}[U]=\overline{\overG[U]}$. 
\end{lem}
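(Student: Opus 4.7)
I would prove the chain by handling three steps. The first inclusion $\overG[U]\subseteq \widehat{\overG[U]}$ is immediate, since any copy $C$ is the intersection of the one-element family $\{C\}$. For $\widehat{\overG[U]}\subseteq \overG^{\mathfrak{rc}}[U]$, I would show that any intersection $A=\bigcap_{i\in I}C_i$ of copies satisfies $A=\mathfrak{rc}(A)$: extensivity of the ranked closure gives $A\subseteq \mathfrak{rc}(A)$, while for every finite $F\subseteq A$ every $C_i$ contains $F$, so Theorem \ref{thm:rcclinters} gives $\mathfrak{rc}(F)\subseteq \bigcap_{i\in I}C_i=A$; taking the union over $F\in \finp{A}$ yields $\mathfrak{rc}(A)\subseteq A$.

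For the central equality $\overG^{\mathfrak{rc}}[U]=\overline{\overG[U]}$, the inclusion $(\supseteq)$ is the slicker direction. Given $A\in \overline{\overG[U]}$ I again aim at $A=\mathfrak{rc}(A)$, and to rule out an $x\in \mathfrak{rc}(A)\setminus A$ I would pick $F\in \finp{A}$ with $x\in \mathfrak{rc}(F)$, notice that $V:=\{B\in \powerset(U): F\subseteq B,\; x\notin B\}$ is a basic open neighborhood of $A$ in the powerset topology, extract $C\in V\cap \overG[U]$, and derive a contradiction from Theorem \ref{thm:rcclinters}: $F\subseteq C$ forces $\mathfrak{rc}(F)\subseteq C$, so $x\in C$, contrary to $x\notin C$.

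The inclusion $(\subseteq)$ is where the real work lies. Given $S\subseteq U$ and a basic neighborhood $V=\{B: F_2\subseteq B,\; F_1\cap B=\emptyset\}$ of $\mathfrak{rc}(S)$ (with $F_1,F_2$ disjoint finite sets, $F_2\subseteq \mathfrak{rc}(S)$, $F_1\cap \mathfrak{rc}(S)=\emptyset$), I would first absorb $S$ into a single finite $F\in \finp{S}$ with $F_2\subseteq \mathfrak{rc}(F)$, using that the family $\{\mathfrak{rc}(F):F\in \finp{S}\}$ is up-directed and covers $\mathfrak{rc}(S)$. Then I would invoke Theorem \ref{thm:interschcopi} to produce a decreasing $\omega$-chain $C_0\supset C_1\supset\cdots$ of copies containing $F$ with $\bigcap_i C_i=\mathfrak{rc}(F)$, and use $F_1\cap \mathfrak{rc}(F)\subseteq F_1\cap \mathfrak{rc}(S)=\emptyset$ to pick $i$ large enough that $C_i\cap F_1=\emptyset$; together with $F_2\subseteq \mathfrak{rc}(F)\subseteq C_i$ this puts $C_i$ in $V\cap \overG[U]$. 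The degenerate case $\overG[U]=\{U\}$, in which Theorem \ref{thm:interschcopi} does not apply, must be dispatched separately: by Theorem \ref{thm:outlnem} every point orbit is then ranked, so $\mathfrak{rc}(\emptyset)=U$, forcing all four sets in the chain to collapse to $\{U\}$. The main obstacle is precisely the step just described: the powerset topology is controlled by finite coordinates, whereas $\mathfrak{rc}(F)$ is an \emph{a priori} infinite intersection, and Theorem \ref{thm:interschcopi} is exactly the bridge that converts the ranked closure into a countable decreasing family of copies separating finite witnesses.
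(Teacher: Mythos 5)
Your proof is correct, and it follows the same overall decomposition as the paper (the two easy inclusions plus the two directions of the central equality), but it substitutes different machinery at two points. For $\widehat{\overG[U]}\subseteq \overG^{\mathfrak{rc}}[U]$ and for $\overline{\overG[U]}\subseteq \overG^{\mathfrak{rc}}[U]$ you argue concretely with finite witnesses and Theorem \ref{thm:rcclinters}; the paper gets both at once from the abstract observation that $\overG^{\mathfrak{rc}}[U]$ is an algebraic closure system containing $\overG[U]$, hence closed under intersections and topologically closed in $\powerset(U)$. Your versions are just the unpacked form of that argument and are fine. The real divergence is in the hard inclusion $\overG^{\mathfrak{rc}}[U]\subseteq \overline{\overG[U]}$: you reach into Theorem \ref{thm:interschcopi} to manufacture a decreasing $\omega$-chain of copies with intersection $\mathfrak{rc}(F)$ and then truncate it to dodge the finite forbidden set, whereas the paper gets the same separating copy directly from the much earlier and lighter Corollary \ref{cor:in/out} (a finite set disjoint from $\mathfrak{ic}(F)$ can be avoided by a copy containing $F$), combined with $\mathfrak{rc}(F)=\mathfrak{ic}(F)$ from Theorem \ref{thm:rcclinters}. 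Your route works --- there is no circularity, since Theorem \ref{thm:interschcop} does not depend on this lemma --- and your up-directedness reduction to a single finite $F$ and your separate treatment of the degenerate case $\overG[U]=\{U\}$ are both handled correctly; but the chain theorem is strictly stronger than what is needed here, and using Corollary \ref{cor:in/out} instead would let you delete the degenerate-case analysis entirely.
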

The content of this lemma is in the equality $\overG^{\mathfrak{rc}}[U]=\overline{\overG[U]}$. Indeed, 
the first inclusion is obvious.  The family $\overG^{\mathfrak{rc}}[U]$ defines an algebraic closure system. As a closure system,  it is closed under intersection, since it contains $\overG[U]$ it contains  $\widehat{\overG[U]}$. Being algebraic, it is closed in $\powerset(U)$ equipped with the product topology. Since it contains ${\overG[U]}$, it contains its topological closure, that is $\overline{\overG[U]}$. It remains to show that $\overG^{\mathfrak{rc}}[U]\subseteq\overline{\overG[U]}$.  Note that a subset $S$ of $U$ is in $\overline{\overG[U]}$  if for every finite subset $F\subseteq S$ and every finite subset $E$ of $U\setminus S$ there exists a copy $C\in \overG[U]$ for which $F\subseteq C$ and $E\cap C=\emptyset$. Thus, let $S\in \overG^{\mathfrak{rc}}[U]$.  Let $F\subseteq S$ be a finite subset,  and  let $E$ be a finite subset  of $U\setminus S$. Since $F$ is finite, $\mathfrak{rc}(F)$ is equal to $\mathfrak{ic}(F)$, the intersection of copies containing $F$ (Theorem \ref{thm:outlnem}), hence for every $x\not \in F$ there is a copy $C$ containing $F$ and not $x$. In fact, there is some copy  $C$  containing  $F$ and  no element of $E$  (see Lemma \ref{cor:in/out}). Hence, $\mathfrak{ic}(F)\in \overline{\overG[U]}$.   

The fact that $\overline{\overG[U]}=\overG^{\mathfrak{rc}}[U]$  has the following significant consequence: 

\begin{cor} $\overline{\overG[U]}$, the topological closure of the set of copies, is closed under intersections. 
\end{cor}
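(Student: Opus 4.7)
The plan is to derive the corollary immediately from Lemma~\ref{lem:equalityclosures}, which already does most of the work. That lemma establishes the key equality $\overline{\overG[U]}=\overG^{\mathfrak{rc}}[U]$, so it suffices to verify that the family $\overG^{\mathfrak{rc}}[U]$ of ranked closures of subsets of $U$ is stable under arbitrary intersections.

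First I would observe that $S\in \overG^{\mathfrak{rc}}[U]$ iff $S$ is a fixed point of the ranked closure operator $\mathfrak{rc}$, that is, $\mathfrak{rc}(S)=S$. As noted in the discussion following the statement of Lemma~\ref{lem:equalityclosures}, $\mathfrak{rc}$ is an algebraic closure operator on $\powerset(U)$ (its precise definition and the verification that it is monotone, extensive, and idempotent are to be given in Section~\ref{sect:rankedcl}). Once this is granted, a routine fixed-point argument applies: given any family $\{S_i\}_{i\in I}\subseteq \overG^{\mathfrak{rc}}[U]$, set $T:=\bigcap_{i\in I}S_i$; monotonicity of $\mathfrak{rc}$ yields $\mathfrak{rc}(T)\subseteq \mathfrak{rc}(S_i)=S_i$ for every $i\in I$, so $\mathfrak{rc}(T)\subseteq T$, and extensivity gives equality. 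Hence $T\in \overG^{\mathfrak{rc}}[U]$.

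Combining this closure property of $\overG^{\mathfrak{rc}}[U]$ with the equality $\overline{\overG[U]}=\overG^{\mathfrak{rc}}[U]$ from Lemma~\ref{lem:equalityclosures} yields the corollary.

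There is essentially no obstacle beyond citing Lemma~\ref{lem:equalityclosures}; the only mild subtlety is to keep the bookkeeping straight between the three distinct objects in play (the set of copies, their intersections, and their ranked closures), and to be sure the fact that $\mathfrak{rc}$ is a genuine closure operator is properly in hand when this corollary is invoked. If one wanted a self-contained proof without using the equality from Lemma~\ref{lem:equalityclosures}, the work would shift to arguing that a filtered intersection of sets of the form $\{S\subseteq U : F\subseteq S,\; E\cap S=\emptyset\}$ witnessing membership in $\overline{\overG[U]}$ can be recombined for an intersection, which amounts to essentially the same argument.
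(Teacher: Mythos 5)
Your proof is correct and follows essentially the same route as the paper: the corollary is deduced directly from the equality $\overline{\overG[U]}=\overG^{\mathfrak{rc}}[U]$ of Lemma~\ref{lem:equalityclosures} together with the observation that $\overG^{\mathfrak{rc}}[U]$, being the family of closed sets of the closure operator $\mathfrak{rc}$ (Lemma~\ref{lem:rcclosedop}), is closed under intersections. Your fixed-point argument merely spells out the standard fact the paper invokes with the phrase ``as a closure system, it is closed under intersection.''
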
 

As  an other consequence, we have:
 \begin{thm}\label{thm:equrcpwscl}[Theorem \ref{thm:alrcinterscl}]
Let $U$ be a countable set and $\mathrm{G}$ be a subgroup of $\mathfrak S(U)$. Then $\mathfrak{rc}(S)$ is the least element of $\overline{\overG[U]}$ for every subset $S$ of $U$. 
\end{thm}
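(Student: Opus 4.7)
The plan is to leverage Lemma \ref{lem:equalityclosures}, which identifies $\overline{\overG[U]}$ with $\overG^{\mathfrak{rc}}[U]$, the set of ranked closures of arbitrary subsets of $U$. Once that identification is in hand, the statement reduces to showing that $\mathfrak{rc}(S)$ is the $\subseteq$-least member of $\overG^{\mathfrak{rc}}[U]$ containing $S$, which is a formal consequence of $\mathfrak{rc}$ being a closure operator on $\powerset(U)$.

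First I would check that $\mathfrak{rc}(S)$ itself is a legitimate candidate. Extensivity of the ranked closure gives $S \subseteq \mathfrak{rc}(S)$, and $\mathfrak{rc}(S)$ lies in $\overG^{\mathfrak{rc}}[U]$ since that family is by definition the image of $\mathfrak{rc}$, or equivalently its set of fixed points via the idempotency $\mathfrak{rc}(\mathfrak{rc}(S))=\mathfrak{rc}(S)$.

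For minimality, I would take an arbitrary $T \in \overG^{\mathfrak{rc}}[U]$ with $S \subseteq T$ and write $T = \mathfrak{rc}(T')$ for some $T' \subseteq U$. Monotonicity of $\mathfrak{rc}$ then yields
\[
\mathfrak{rc}(S) \subseteq \mathfrak{rc}(T) = \mathfrak{rc}(\mathfrak{rc}(T')) = \mathfrak{rc}(T') = T,
\]
so $\mathfrak{rc}(S)$ is indeed the $\subseteq$-least element of $\overline{\overG[U]}$ containing $S$.

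The real content of the theorem is thus packaged into two prior results: Section \ref{sect:rankedcl}, where $\mathfrak{rc}$ is established as an algebraic closure operator (with the three closure axioms in particular holding), and Lemma \ref{lem:equalityclosures}, which delivers the nontrivial equality $\overG^{\mathfrak{rc}}[U]=\overline{\overG[U]}$. Granting those, no real obstacle arises here; the only thing to double-check is that the setup of Section \ref{sect:rankedcl} genuinely provides extensivity, monotonicity, and idempotency of $\mathfrak{rc}$ in full generality (not merely for finite subsets $F$, where Theorem \ref{thm:outlnem} already does the heavy lifting).
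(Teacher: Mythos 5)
Your proposal is correct and follows essentially the same route as the paper: the published proof likewise invokes Lemma \ref{lem:equalityclosures} to identify $\overline{\overG[U]}$ with $\overG^{\mathfrak{rc}}[U]$ and then observes that the closure of $S$ with respect to this closure system is $\mathfrak{rc}(S)$. The closure-operator properties you flag as needing verification in full generality are exactly what Lemma \ref{lem:rcclosedop} supplies, so no gap remains.
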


The intersection of the copies containing a finite set is equal to the ranked closure of this finite set. We do not know if this is also the case for infinite subsets of $U$. That is we cannot decide:

\begin{problem}
Is it true that $\widehat{\overG[U]}=\overline{\overG[U]}$? 
\end{problem}

But,  Lemma \ref{lem:alrcinterscl} states   that $\widehat{\overG[U]}$ is an algebraic closure system  if and only if $\widehat{\overG[U]}=\overline{\overG[U]}$.

 \section{Elementary properties}

Let  $U$ be a countable  set and let $\mathrm{G}$ be a subgroup of the symmetric group $\mathfrak{S}(U)$. 
 
\begin{defin}\label{defin:contpowerbrr}
For $\mathscr{Q}\subseteq \powerset(U)$ and $S\cup T\subseteq U$ let $\mathscr{Q}\llbracket S\rrg=\{V\in \mathscr{Q}\mid S\subseteq V\}$ and let $\mathscr{Q}\llbracket S,T\rrbracket=\{V\in \mathscr{Q}\mid \text{ $S\subseteq V$ and $T\cap V=\emptyset$}\}$.

For $E$ and $F$ two finite subsets of $U$  the set $\powerset(U)\llbracket F,E\rrbracket$ is a basic neighbourhood for the {\em powerset topology}.
\end{defin}

Of course $\overG[U]\subseteq \powerset(U)$. Then  $\overline{\mathrm{G}}[U]\llg F\rrg$, for $F\subseteq U$, denotes the set of copies for the group $\mathrm{G}$ which contain $F$ as a subset.  The set $\overG[U]\llbracket F,E\rrbracket$ is then a neighbourhood for the powerset topology relative to the set of copies $\overG[U]$.  We investigate the set of copies $\overG[U]$ for the group $\mathrm{G}$ with respect to the order $\subseteq$ on $\powerset(U)$ and with respect to the  powerset topology on $\powerset(U)$.   Then $\overline{\overG[U]}$ is the set of subsets of $U$ which are in the topological closure of $\overG[U]$ the set of copies  for the group $\mathrm{G}$ and $(\overG[U];\subseteq)$ is the {\em poset of copies} and $\widehat {\overline{\mathrm{G}}[U]}$ is the set of intersections of copies.   The \emph{hypergraph of copies} is the hypergraph  $H(\overline G):=(U, \overline{\mathrm{G}}[U])$ whose hyperedges are the copies.

\begin{defin}\label{defin:stabilzer}
  Let $X$ be a subset of $U^U$ and $F$ be  a subset of $U$. Then $X\langle F\rangle:=\{f\in X\mid \restrict{f}{F}=\restrict{\id\, }{\, F}\}$ denotes the {\em stabilizer} of $F$  for the set $X$ of  functions and $X_{ F} =\{f\in X\mid f[F]=F\}$ denotes the {\em setwise stabilizer} of $F$  for the set $X$ of  functions. Also,  set $\overline X$ the topological closure of $X$ in $U^U$ equipped with the product topology. 
  \end{defin} 

\begin{lem}\label{lem:extov}
Let $X\subseteq U^U$ and $F\in \finp{U}$. Then 
\begin{enumerate}
\item $\overline X\langle F\rangle = \overline {X\langle F\rangle }$.
\item $\overline X_{ F}= \overline {X_{ F}}$.
\end{enumerate}
\end{lem}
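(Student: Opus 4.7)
The plan is to prove both equalities by exactly the same strategy: observe that each defining condition (``$f$ fixes $F$ pointwise'' in part (1), and ``$f[F]=F$'' in part (2)) depends only on the restriction $\restrict{f}{F}$, so it picks out a clopen subset of $U^U$ in the product topology. One inclusion then follows because closures intersected with a clopen set behave well, and the other follows by a single approximation of a limit function on the finite set $E\cup F$ where $E$ is the test set of the function topology.

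For (1), first I would note that the set $C:=\{f\in U^U\mid \restrict{f}{F}=\restrict{\id}{F}\}$ is clopen, being an intersection of the finitely many clopen sets $\{f\mid f(x)=x\}$ for $x\in F$. Since $X\langle F\rangle\subseteq C$ and $C$ is closed, one has $\overline{X\langle F\rangle}\subseteq \overline X\cap C=\overline X\langle F\rangle$. For the reverse inclusion, take $g\in \overline X\langle F\rangle$ and an arbitrary finite $E\subseteq U$; set $E':=E\cup F$ and use $g\in\overline X$ to obtain $h\in X$ with $\restrict{h}{E'}=\restrict{g}{E'}$. Because $\restrict{g}{F}=\restrict{\id}{F}$, we get $\restrict{h}{F}=\restrict{\id}{F}$, so $h\in X\langle F\rangle$ and $\restrict{h}{E}=\restrict{g}{E}$; hence $g\in\overline{X\langle F\rangle}$.

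For (2), I would argue identically after noting that $C':=\{f\in U^U\mid f[F]=F\}$ is a finite union, over the bijections $\sigma$ of $F$, of the clopen sets $\{f\mid \restrict{f}{F}=\sigma\}$, so $C'$ is again clopen. Thus $\overline{X_F}\subseteq \overline X\cap C'=\overline X_F$. Conversely, given $g\in \overline X_F$ and a finite $E\subseteq U$, approximate $g$ on $E':=E\cup F$ by some $h\in X$; then $\restrict{h}{F}=\restrict{g}{F}$ forces $h[F]=g[F]=F$, i.e.\ $h\in X_F$, while $\restrict{h}{E}=\restrict{g}{E}$, giving $g\in\overline{X_F}$.

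There is no real obstacle here; the only step worth highlighting is the observation that both stabilizer conditions are \emph{finitely determined}, which is what makes them clopen and what lets the approximation on $E\cup F$ simultaneously handle the membership test and the approximation test. The proof will consequently be short and essentially symmetric between the two parts.
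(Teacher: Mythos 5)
Your proof is correct and follows essentially the same route as the paper: one inclusion comes from observing that the stabilizer conditions are finitely determined (hence cut out a closed set), and the reverse comes from approximating the limit function on $F\cup E$ so that the approximant inherits the stabilizer property. The only difference is that you spell out the clopenness explicitly, which the paper leaves as a one-line remark.
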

\begin{proof}
The sets of functions $\overline X\langle F\rangle$ and $\overline{X}_{ F}$ are closed because $\overline{X}$ is closed and $F$ is finite. Hence $\overline {X\langle F\rangle}\subseteq \overline X\langle F\rangle$ and  $ \overline {X_{ F}}\subseteq \overline X_{ F}$. Conversely, let $f\in  \overline X\langle F\rangle$. Let $H\in \finp{U}$.  Since $f\in \overline X$ there is some $g\in X$ such that $\restrict{g}{(F\cup H)}=\restrict{f}{(F\cup H)}$.  Since  $g\in X\langle F\rangle$ and $f$ and   $g$ coincide on $H$ this implies $f\in \overline {X\langle F\rangle}$. 

Let $f\in  \overline X_{ F}$ and let $H\in \finp{U}$.  Since $f\in \overline X$ there is some $g\in X$ such that $\restrict{g}{(F\cup H)}=\restrict{f}{(F\cup H)}$.  Since  $g\in X_{F}$ and $f$ and   $g$ coincide on $H$ this implies $f\in \overline {X_{ F}}$. 
 \end{proof}

\begin{lem}\label{lem:fix-copy}  If $F$ is a finite subset of a copy $C\in \overG[U]$ then there exists a function $f\in \overG$ with $f[U]=C$ and $f(a)=a$ for all $a\in F$. That is: $\overline{\mathrm{G}\langle F\rangle}[U]=\overG\langle F\rangle[U]=  \overG[U]\llg F\rrg$ for every $F\in \finp{U}$. 
 \end{lem}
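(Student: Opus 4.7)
The statement splits into three parts: the equality $\overline{\mathrm{G}\langle F\rangle}[U]=\overG\langle F\rangle[U]$, and the equality $\overG\langle F\rangle[U]=\overG[U]\llg F\rrg$, and the existence claim which is really just the nontrivial inclusion in the second equality. The first equality is immediate from Lemma~\ref{lem:extov}(1), which tells us that $\overline{\mathrm{G}\langle F\rangle}=\overG\langle F\rangle$, so the sets of images coincide. So I focus on the second equality.

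The inclusion $\overG\langle F\rangle[U]\subseteq \overG[U]\llg F\rrg$ is trivial: if $f\in \overG$ fixes $F$ pointwise then $F=f[F]\subseteq f[U]$, so $f[U]$ is a copy containing $F$. The content of the lemma is the reverse inclusion: given a copy $C\in \overG[U]$ with $F\subseteq C$, produce an $f\in \overG$ with $f\restriction F=\id\restriction F$ and $f[U]=C$.

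Here is the construction. Pick any $g\in \overG$ with $g[U]=C$; since $g$ is injective and $F\subseteq C=g[U]$, the finite set $F':=g^{-1}[F]\subseteq U$ is well-defined with $|F'|=|F|$. Because $g\in \overG=\overline{\mathrm{G}}$, there exists $\gamma\in \mathrm{G}$ with $\gamma\restriction F'=g\restriction F'$. In particular, for every $a\in F$ we have $\gamma(g^{-1}(a))=g(g^{-1}(a))=a$, and since $\gamma\in \mathrm{G}\subseteq \mathfrak{S}(U)$ is a permutation of $U$ this yields $\gamma^{-1}(a)=g^{-1}(a)$. Now set $f:=g\circ \gamma^{-1}$. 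Since $\mathrm{G}\subseteq \overG$ and $\overG$ is closed under composition, $f\in \overG$. Moreover, for $a\in F$ we compute $f(a)=g(\gamma^{-1}(a))=g(g^{-1}(a))=a$, and since $\gamma^{-1}$ is a permutation of $U$ we have $f[U]=g[\gamma^{-1}[U]]=g[U]=C$. Thus $f\in \overG\langle F\rangle$ and $f[U]=C$, as required.

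There is no real obstacle; the only small point to be careful about is the order of composition. The right move is to compose $g$ on the \emph{right} with an element of $\mathrm{G}$ that agrees with $g$ on the finite preimage $F'$, so that the composition has $g^{-1}$ built into it on $F$, while the image $g[U]=C$ is preserved because the right factor is surjective.
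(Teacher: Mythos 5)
Your proof is correct and follows essentially the same route as the paper: choose $g\in\overG$ with $g[U]=C$, take the finite preimage $F'=g^{-1}[F]$, approximate $g$ on $F'$ by some $\gamma\in\mathrm{G}$, and set $f=g\circ\gamma^{-1}$. The paper's proof is word-for-word this construction (with $h$ in place of $\gamma$), and your additional remark that the first equality follows from Lemma~\ref{lem:extov}(1) matches the paper's implicit ``the equality follows too.''
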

 \begin{proof} The following inclusion for every subset $F$ of $U$ (finite or not) is obvious: $\overG\langle F\rangle[U]\subseteq   \overG[U]\llg F\rrg$. 
We claim that the  reverse inclusion holds for every $F\in \finp{U}$.  Let $C\in \overG[U]\llg F\rrg$. Let $g\in \overline G$ such that $g[U]=C$. Let $F'$ be  such that $g(F')=F$. Then there is an  $h\in G$ such that $\restrict{h}{F'}=\restrict{g}{F'}$.  Set $f:= g\circ h^{-1}$. Then $C=f[U]$ and  $f\in \overG\langle F\rangle $ implying $C\in (\overG\langle F\rangle)[U]$. The claim follows. The equality follows too. 

\end{proof}

\begin{cor}\label{cor:fix-copy}
$(\overG\langle F\rangle)[U]= \overG_{ F}[U]= \overG[U]\llg F\rrg$ for every $F\in \finp{U}$.
\end{cor}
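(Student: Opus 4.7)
The plan is to observe that the corollary follows almost immediately from Lemma \ref{lem:fix-copy} together with the obvious inclusion $\overG\langle F\rangle \subseteq \overG_F$. Indeed, Lemma \ref{lem:fix-copy} already delivers the second of the two equalities, namely $\overG\langle F\rangle[U] = \overG[U]\llg F\rrg$, so the only genuine content left is the equality $\overG\langle F\rangle[U] = \overG_F[U]$.

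For this, I would argue by a short chain of inclusions. First, since any $f\in \overG\langle F\rangle$ satisfies $\restrict{f}{F}=\restrict{\id}{F}$, we have $f[F]=F$, hence $f\in \overG_F$; this gives $\overG\langle F\rangle \subseteq \overG_F$, and taking images of $U$ yields
\[
\overG\langle F\rangle[U] \;\subseteq\; \overG_F[U].
\]
Second, for any $f\in \overG_F$ we have $F = f[F] \subseteq f[U]$, so $f[U]\in \overG[U]\llg F\rrg$, which gives
\[
\overG_F[U] \;\subseteq\; \overG[U]\llg F\rrg.
\]
Combining these two inclusions with the equality $\overG[U]\llg F\rrg = \overG\langle F\rangle[U]$ from Lemma \ref{lem:fix-copy} closes the loop and forces equality throughout.

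There is no real obstacle here; the corollary is essentially an unpacking of definitions once Lemma \ref{lem:fix-copy} is in hand. The only point worth mentioning explicitly in the write-up is that the nontrivial direction in Lemma \ref{lem:fix-copy} (producing a function in $\overG$ fixing $F$ pointwise with prescribed image $C$) is doing all the work, whereas the passage between the setwise and pointwise stabilizers at the level of images is automatic because $F$ is already contained in $f[U]$ as soon as $f[F]=F$.
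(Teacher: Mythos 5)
Your proof is correct and is exactly the intended derivation: the paper leaves this corollary unproved as an immediate consequence of Lemma \ref{lem:fix-copy}, and your sandwich $\overG\langle F\rangle[U]\subseteq \overG_F[U]\subseteq \overG[U]\llg F\rrg=\overG\langle F\rangle[U]$ is the natural way to unpack it.
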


\begin{cor}\label{cor:in/out}
If $F$ is a finite subset of a copy $C\in \overG[U]$ then $\mathfrak{ic}(F)$, the intersection of copies containing $F$ is equal to $\mathfrak{ic}_C(F)$ the intersection of copies containing $F$ and included into $C$. Furthermore, if $E$ is a finite set disjoint from $\mathfrak{ic}(F)$, there is a copy containing $F$ and disjoint from $E$. \end{cor}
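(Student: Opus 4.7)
The plan is to prove the equality $\mathfrak{ic}(F) = \mathfrak{ic}_C(F)$ first; the ``furthermore'' clause then follows by a short induction on $|E|$. The inclusion $\mathfrak{ic}(F) \subseteq \mathfrak{ic}_C(F)$ is immediate, since the right-hand intersection runs over a subfamily of the copies defining the left-hand one. For the reverse, I will prove the contrapositive: given $x \notin \mathfrak{ic}(F)$, I produce a copy $D \subseteq C$ with $F \subseteq D$ and $x \notin D$.

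Using Lemma \ref{lem:fix-copy}, fix $f \in \overG\langle F\rangle$ with $f[U] = C$. If $x \notin C$ then $D := C$ already works; otherwise $x \in C$ and the injectivity of $f$ yields a unique $y \in U$ with $f(y) = x$. Provided one can show $y \notin \mathfrak{ic}(F)$, pick any copy $D_0 \supseteq F$ with $y \notin D_0$; writing $D_0 = h[U]$ with $h \in \overG\langle F\rangle$ (Lemma \ref{lem:fix-copy} again), the set $D := (f \circ h)[U] = f[D_0]$ is a copy (as $\overG$ is closed under composition), contains $F$ (since $f \circ h$ fixes $F$ pointwise), sits inside $C = f[U]$, and omits $x = f(y)$ by injectivity of $f$. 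The main obstacle is thus the transfer from $x \notin \mathfrak{ic}(F)$ to $y \notin \mathfrak{ic}(F)$.

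I intend to clear this obstacle via the auxiliary fact that every $g \in \mathrm{G}\langle F\rangle$ preserves $\mathfrak{ic}(F)$ setwise: such a $g$ is a bijection of $U$ fixing $F$, and $g[D'] = (g \circ h')[U]$ is again a copy containing $g[F] = F$ whenever $D' = h'[U]$ is a copy containing $F$; applying the same reasoning to $g^{-1}$ shows $g$ permutes the family $\overG[U]\llg F\rrg$ bijectively and hence fixes its intersection $\mathfrak{ic}(F)$ setwise. Since Lemma \ref{lem:extov}(1) gives $f \in \overline{\mathrm{G}\langle F\rangle}$, the density of $\mathrm{G}\langle F\rangle$ in $\overG\langle F\rangle$ in the function topology yields some $g \in \mathrm{G}\langle F\rangle$ agreeing with $f$ at $y$, so $g(y) = x$; if $y$ lay in $\mathfrak{ic}(F)$ then $x = g(y) \in g[\mathfrak{ic}(F)] = \mathfrak{ic}(F)$, contradicting the choice of $x$. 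This completes the equality.

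For the ``furthermore'' assertion, I induct on $|E|$. When $|E| \le 1$ the conclusion is immediate from the definition of $\mathfrak{ic}(F)$ (for $|E|=0$ simply take the copy $C$ given by the hypothesis). For $|E| > 1$, pick $e \in E$ and apply the inductive hypothesis to $E \setminus \{e\}$ (still disjoint from $\mathfrak{ic}(F)$) to obtain a copy $D'$ containing $F$ and disjoint from $E \setminus \{e\}$. Since $e \notin \mathfrak{ic}(F)$ and the first part, applied with $D'$ in place of $C$, gives $\mathfrak{ic}(F) = \mathfrak{ic}_{D'}(F)$, there is a copy $D \subseteq D'$ with $F \subseteq D$ and $e \notin D$; this $D$ is then disjoint from all of $E$.
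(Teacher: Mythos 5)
Your proof is correct, and its skeleton matches the paper's: reduce the equality to producing, for each $x\notin\mathfrak{ic}(F)$, a copy inside $C$ containing $F$ and omitting $x$ by pushing a copy forward under a map supplied by Lemma \ref{lem:fix-copy}, and then dispose of a finite $E$ by iterating the first part (your induction on $|E|$ is exactly the paper's ``repeating the process'' made formal). The one genuine divergence is how you arrange for the pushed-forward copy to omit $x$. The paper applies Lemma \ref{lem:fix-copy} to the \emph{enlarged} set $F\cup\{x\}$ (legitimate since $x\in C$), obtaining $f\in\overG\langle F\cup\{x\}\rangle$ with $f[U]=C$; because $f$ is injective and fixes $x$, the image of any copy containing $F$ and avoiding $x$ still avoids $x$, and no transfer is needed. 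You instead fix only $F$, pull $x$ back to $y=f^{-1}(x)$, and must then justify $y\notin\mathfrak{ic}(F)$, which you do via the auxiliary observation that each $g\in\mathrm{G}\langle F\rangle$ permutes the family $\overG[U]\llg F\rrg$ and hence stabilizes $\mathfrak{ic}(F)$ setwise, combined with the density given by Lemma \ref{lem:extov}. That observation is correct (and of independent interest --- compare Corollary \ref{cor:interschcop}(2b)), but it costs you an extra invariance lemma and a density step that the paper's choice of stabilized set renders unnecessary. Both routes are sound; the paper's is shorter, yours is more self-explanatory about \emph{why} the obstruction at $x$ can be moved.
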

\begin{proof} Clearly, $\mathfrak{ic}(F)\subseteq \mathfrak{ic}_C(F)\subseteq C$. Let $x\in C \setminus \mathfrak{ic}(F)$. Let $C'\in \overG[U]$. Set $F':= F\cup \{x\}$. Apply   Lemma \ref {lem:fix-copy} to 
$C$ and $F'$. There is some $f\in \overline G\langle F'\rangle$ such that $f[U]=C$. Then $C'':= f[C']$ is a copy containing  $F$ included in $C$ and not containing $x$, hence $x\not \in\mathfrak{ic}(F)$. Now, we prove that  there is some copy $D$ containing $F$ and disjoint from $E$. Indeed, pick $x\in E$, then some copy containing $F$, say  $D$,  avoids $x$.   The property above ensures that $\mathfrak{ic}(F)$, the intersection of copies containing $F$ is equal to  $\mathfrak{ic}_D(F)$, the intersection of copies containing $F$ that are included into $D$. If $D$ contains some  element $y$ of $E$ we may find a copy $D'$ included into $D$ and  avoiding it. Repeating the process, we find a copy containing no element of $E$. 
\end{proof}
  
\begin{lem}\label{lem:restriction} 
Let $\mathrm{G}$ be a subgroup of $\mathfrak S(U)$ and let  $\{f,g\}\subseteq  \overG$ with $f[U]\subseteq g[U]$. Then $g^{-1}\circ f\in \overG$. If $f[U]= g[U]$ then $g^{-1}\circ f\in \overG\cap \mathfrak{S}(U)$.
 \end{lem}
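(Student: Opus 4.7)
The plan is to approximate $g^{-1}\circ f$ on finite sets by elements of $\mathrm{G}$, exploiting the fact that $\mathrm{G}$ is a group (not just its closure). First I would check that $g^{-1}\circ f$ is a well-defined map from $U$ to $U$: since $g\in \overG$, every element of $\overG$ is injective (each is a limit of permutations, hence one-to-one on every finite set), so $g^{-1}$ makes sense as a map from $g[U]$ onto $U$; and because $f[U]\subseteq g[U]$, the composite $g^{-1}\circ f$ is defined on all of $U$.

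For the membership in $\overG$, fix an arbitrary $F\in \finp{U}$; I need to produce $h\in \mathrm{G}$ with $\restrict{h}{F}=\restrict{(g^{-1}\circ f)}{F}$. Set $F':=f[F]$ (a finite subset of $g[U]$) and $F'':=g^{-1}[F']$, which is finite by injectivity of $g$. Using $f\in \overG$ I pick $h_1\in \mathrm{G}$ with $\restrict{h_1}{F}=\restrict{f}{F}$, and using $g\in \overG$ I pick $h_2\in \mathrm{G}$ with $\restrict{h_2}{F''}=\restrict{g}{F''}$. Since $\mathrm{G}$ is a group, $h:=h_2^{-1}\circ h_1$ belongs to $\mathrm{G}$, and for $x\in F$ one unwinds $h(x)=h_2^{-1}(f(x))=h_2^{-1}(g(g^{-1}(f(x))))=g^{-1}(f(x))$, since $g^{-1}(f(x))\in F''$ where $h_2$ coincides with $g$. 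Hence $g^{-1}\circ f\in \overG$.

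For the second assertion, suppose $f[U]=g[U]$. The argument above already shows $g^{-1}\circ f\in \overG$, so it remains to verify that it is a bijection of $U$. Injectivity is immediate since $f$ and $g^{-1}$ are both injective on their respective domains. For surjectivity, given $z\in U$ we have $g(z)\in g[U]=f[U]$, hence $g(z)=f(x)$ for some $x\in U$, and then $(g^{-1}\circ f)(x)=z$. Thus $g^{-1}\circ f\in \overG\cap \mathfrak{S}(U)$.

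There is no genuine obstacle in this proof; the only point that requires any care is that the approximant $h_2^{-1}\circ h_1$ lies in $\mathrm{G}$ rather than merely in $\overG$, which is why one must choose $h_1$ and $h_2$ from $\mathrm{G}$ itself and exploit closure of $\mathrm{G}$ under composition and inversion.
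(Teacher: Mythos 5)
Your proof is correct and follows essentially the same route as the paper's: approximate $f$ on $F$ and $g$ on $g^{-1}[f[F]]$ by elements of $\mathrm{G}$, compose the inverse of the second with the first inside the group, and observe the result agrees with $g^{-1}\circ f$ on $F$; the surjectivity argument for the case $f[U]=g[U]$ is the same (the paper merely states it without the one-line verification you supply). No issues.
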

 \begin{proof} 
Let $F\in\powerset_{<\omega}(U)$. There exists a function $k\in\mathrm{G}$ with $\restrict{k}{F}=\restrict{f}{F}$ and there exists a function $h\in \mathrm{G}$ with $\restrict{h}{(g^{-1}[k[F]])}=\restrict{g}{(g^{-1}[k[F]])}$. Implying that $\restrict{(h^{-1}\circ k)}{F}=\restrict{(g^{-1}\circ f)}{F}$ with $h^{-1}\circ k\in \mathrm{G}$.   Let $f[U]= g[U]$. Then $g^{-1}\circ f$ maps $U$ onto $U$. 
\end{proof}

 \begin{lem}\label{lem:justonecopy}
 Let ${\mathrm{G}}$ be a subgroup of $\mathfrak S(U)$. Then the following properties are equivalent:
 \begin{enumerate} [(i)]
\item $\overG[U]= \{U\}$;
 \item  $\overG\subseteq \mathfrak S(U)$;
 \item $\overline {\mathrm{G}\langle F\rangle}\subseteq\mathfrak S(U)$ for every $F\in \finp{U}$;
  \item  $\overline {\mathrm{G}\langle F\rangle}\subseteq\mathfrak S(U)$ for some  $F\in \finp{U}$.
 \end{enumerate}
 \end{lem}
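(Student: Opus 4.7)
The plan is to prove a cycle (i) $\Rightarrow$ (ii) $\Rightarrow$ (iii) $\Rightarrow$ (iv) $\Rightarrow$ (ii), together with the easy equivalence (i) $\Leftrightarrow$ (ii), since this way each implication uses only the preceding material in the paper.

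The equivalence (i) $\Leftrightarrow$ (ii) follows from the basic observation recalled in the introduction that $\overG$ consists entirely of injections. Indeed, $\overG[U]=\{U\}$ means $f[U]=U$ for every $f\in \overG$; combined with injectivity this says every $f\in \overG$ is a bijection, i.e.\ $\overG\subseteq \mathfrak{S}(U)$; conversely $\overG\subseteq \mathfrak{S}(U)$ gives $f[U]=U$ for all $f\in\overG$, so $\overG[U]=\{U\}$. The implication (ii) $\Rightarrow$ (iii) is immediate from monotonicity of closure: $\mathrm{G}\langle F\rangle\subseteq \mathrm{G}$ forces $\overline{\mathrm{G}\langle F\rangle}\subseteq \overG\subseteq \mathfrak S(U)$. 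And (iii) $\Rightarrow$ (iv) is trivial (take, say, $F=\emptyset$, noting $\mathrm{G}\langle \emptyset\rangle=\mathrm{G}$).

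The only substantive step is (iv) $\Rightarrow$ (ii). Fix $F\in \finp{U}$ with $\overline{\mathrm{G}\langle F\rangle}\subseteq \mathfrak S(U)$ and let $f\in \overG$ be arbitrary; the task is to show $f\in \mathfrak S(U)$. Pick $g\in \mathrm{G}$ agreeing with $f$ on $F$, which exists by definition of the function topology. Since $g\in \mathrm{G}\subseteq \overG$ and $f[U]\subseteq U=g[U]$, Lemma~\ref{lem:restriction} gives $g^{-1}\circ f\in \overG$. For any $x\in F$ we have $(g^{-1}\circ f)(x)=g^{-1}(g(x))=x$, so $g^{-1}\circ f$ fixes $F$ pointwise, i.e.\ $g^{-1}\circ f\in \overG\langle F\rangle$. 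By Lemma~\ref{lem:extov}(1), $\overG\langle F\rangle=\overline{\mathrm{G}\langle F\rangle}$, and by hypothesis this set is contained in $\mathfrak S(U)$. Thus $g^{-1}\circ f$ is a bijection of $U$, and then $f=g\circ (g^{-1}\circ f)$ is a composition of two bijections, hence in $\mathfrak S(U)$.

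I do not expect any real obstacle: the argument above is essentially forced once one sees that Lemmas~\ref{lem:extov} and \ref{lem:restriction} let one reduce an arbitrary $f\in \overG$ to the pointwise stabilizer of a single finite set $F$. The only point that requires a moment of care is verifying that $g^{-1}\circ f$ genuinely lands in $\overG\langle F\rangle$ (not merely in $\overG$), which is exactly what the choice $\restrict{g}{F}=\restrict{f}{F}$ ensures.
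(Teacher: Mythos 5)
Your proposal is correct and follows essentially the same route as the paper: the paper also dismisses (i)$\Leftrightarrow$(ii) and (ii)$\Rightarrow$(iii)$\Rightarrow$(iv) as obvious, and proves (iv)$\Rightarrow$(ii) by choosing $g\in\mathrm{G}$ agreeing with $f$ on $F$, showing $k:=g^{-1}\circ f\in\overG\langle F\rangle=\overline{\mathrm{G}\langle F\rangle}\subseteq\mathfrak S(U)$ via Lemma~\ref{lem:extov}, and concluding $f=g\circ k\in\mathfrak S(U)$. Your explicit appeal to Lemma~\ref{lem:restriction} to get $g^{-1}\circ f\in\overG$ is a slightly more careful justification of a step the paper treats as immediate, but the argument is the same.
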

 \begin{proof}
 The equivalence between $(i)$ and $(ii)$ and the implications $(ii)\Rightarrow (iii)\Rightarrow (iv)$ are obvious. Suppose that $(iv)$ holds. We will  prove that $(ii)$ holds. Let $f\in \overG$. Let $F\in \finp{U}$ such that  $\overline {\mathrm{G}\langle F\rangle}\subseteq\mathfrak S(U)$  and let $g \in \mathrm{G}$ such that $\restrict{g}{F}= \restrict{f}{F}$. Let $k:= g^{-1}\circ f$. Since $\{g^{-1}, f\}\subseteq  \overG$ the function  $k\in\overG$, actually $k\in \overG\langle F\rangle$. Implying  $k\in \overline {\mathrm{G}\langle F\rangle}$ according to Lemma \ref{lem:extov}.  Hence $k\in\mathfrak S(U)$. Consequently $f= g\circ k\in \mathfrak S(U)$. 

\end{proof} 
 
 It follows that if the stabilizer $\mathrm{G}\langle F\rangle$ of $\mathrm{G}$ for some finite subset $F$ of $U$ contains only the identity function then the group $\mathrm{G}$ is closed in $\mathfrak S(U)$. 

 \begin{lem}\label{lem:onecopy}
The copy $U$ is isolated, for the powerset topology relative to the set of copies $\overline{\mathrm{G}}[U]$   if and only if $\overline{\mathrm{G}}[U]=\{U\}$. 
\end{lem}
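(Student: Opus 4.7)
The plan is to reduce the isolation condition to the one-copy characterization of Lemma~\ref{lem:justonecopy}, using Lemma~\ref{lem:fix-copy} as the bridge.

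The reverse implication is immediate: if $\overline{\mathrm{G}}[U]=\{U\}$ then $\overline{\mathrm{G}}[U]$, as a topological space, consists of a single point, which is vacuously isolated.

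For the forward direction, suppose $U$ is isolated for the powerset topology relative to $\overline{\mathrm{G}}[U]$. By Definition~\ref{defin:contpowerbrr}, there exist finite sets $F,E\subseteq U$ such that
$$\overline{\mathrm{G}}[U]\cap \powerset(U)\llg F,E\rrg=\{U\}.$$
The first observation I would make is that $E$ must be empty: for $U$ itself to belong to this neighbourhood requires $E\cap U=\emptyset$, forcing $E=\emptyset$. Hence $\overline{\mathrm{G}}[U]\llg F\rrg=\{U\}$. By Lemma~\ref{lem:fix-copy}, $\overline{\mathrm{G}}[U]\llg F\rrg=\overline{\mathrm{G}\langle F\rangle}[U]$, so $\overline{\mathrm{G}\langle F\rangle}[U]=\{U\}$.

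Now I would apply Lemma~\ref{lem:justonecopy} twice. Since $\mathrm{G}\langle F\rangle$ is itself a subgroup of $\mathfrak S(U)$, the equivalence $(i)\Leftrightarrow (ii)$ of that lemma, applied to $\mathrm{G}\langle F\rangle$ in place of $\mathrm{G}$, yields $\overline{\mathrm{G}\langle F\rangle}\subseteq \mathfrak S(U)$. But this is precisely condition $(iv)$ of Lemma~\ref{lem:justonecopy} applied to the original group $\mathrm{G}$, and the equivalence $(iv)\Leftrightarrow (i)$ then delivers $\overline{\mathrm{G}}[U]=\{U\}$, completing the argument. I do not foresee any substantive obstacle here; the proof is essentially a bookkeeping exercise unpacking the basic neighbourhood shape together with two invocations of previously established lemmas.
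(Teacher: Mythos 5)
Your proof is correct and follows essentially the same route as the paper: identify the basic neighbourhoods of $U$ (noting the excluded set must be empty), use Lemma~\ref{lem:fix-copy} to rewrite $\overline{\mathrm{G}}[U]\llg F\rrg$ as $\overline{\mathrm{G}\langle F\rangle}[U]$, and then invoke Lemma~\ref{lem:justonecopy} to pass from $\overline{\mathrm{G}\langle F\rangle}\subseteq\mathfrak S(U)$ back to $\overline{\mathrm{G}}[U]=\{U\}$. Your version merely makes explicit two steps the paper leaves implicit (the reduction to $E=\emptyset$ and the double application of Lemma~\ref{lem:justonecopy}), which is harmless.
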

\begin{proof}
The set  $U$ is isolated if and only if  $\overG[U]\llg F\rrg = \{U\}$ for some $F\in \finp{U}$. According to Lemma \ref{lem:fix-copy} and Lemma \ref{lem:extov} $ \{U\}=\overline{\mathrm{G}}[U]\llg F\rrg$ if and only if   $\{U\}=(\overG\langle F\rangle)[U]=(\overline{\mathrm{G}\langle F\rangle})[U]$ if and only if $\mathfrak{S}(U)\supseteq \overline{\mathrm{G}\langle F\rangle}$ if and only if $\overline{\mathrm{G}}[U]=\{U\}$ according to Lemma \ref{lem:justonecopy}.  

\end{proof}

\begin{lem}\label {lem:stronglyisolated} 
For every copy $V\in \overline{\mathrm{G}}[U]\setminus \{U\}$ and subset $F\in \finp{V}$ there exists a copy $W\in \overline{\mathrm{G}}[U]$  with $F\subseteq W\subset V$.  
\end{lem}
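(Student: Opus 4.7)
The plan is to exploit Lemma \ref{lem:fix-copy} together with Lemma \ref{lem:justonecopy} to manufacture a witness $W$ as the image of a suitable composition. Since $F \subseteq V$ is finite and $V \in \overG[U]$, Lemma \ref{lem:fix-copy} (or Corollary \ref{cor:fix-copy}) produces an $f \in \overG\langle F\rangle$ with $f[U] = V$; this $f$ fixes $F$ pointwise and collapses $U$ onto $V$.

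Next I would produce a second function $g \in \overG\langle F\rangle$ whose image is a proper subset of $U$. The hypothesis $V \in \overG[U]\setminus\{U\}$ tells us $\overG[U] \ne \{U\}$, so by Lemma \ref{lem:justonecopy} the condition $\overline{\mathrm{G}\langle F\rangle} \subseteq \mathfrak{S}(U)$ fails (for every finite $F$, in particular for ours). Combined with Lemma \ref{lem:extov}, which gives $\overline{\mathrm{G}\langle F\rangle} = \overG\langle F\rangle$, this yields some $g \in \overG\langle F\rangle$ with $g \notin \mathfrak{S}(U)$. Since every element of $\overG$ is injective, $g \notin \mathfrak{S}(U)$ forces $g[U] \subsetneq U$.

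Now set $W := (f \circ g)[U]$. Because $\overG$ is closed under composition, $f \circ g \in \overG$, so $W \in \overG[U]$ is a copy. For $x \in F$ one has $g(x) = x$ and then $f(g(x)) = f(x) = x$, so $F \subseteq W$. Clearly $W = f[g[U]] \subseteq f[U] = V$. To see the inclusion is strict, pick any $y \in U \setminus g[U]$; then $f(y) \in V$, while the injectivity of $f$ forces $f(y) \notin f[g[U]] = W$. Hence $F \subseteq W \subsetneq V$, as required.

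I do not anticipate any serious obstacle: the argument is essentially a one-line composition, and the only delicate point is to ensure a nontrivial $g$ in the pointwise stabilizer, which is precisely the content of the equivalence (i)$\Leftrightarrow$(iv) in Lemma \ref{lem:justonecopy}.
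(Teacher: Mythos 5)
Your proof is correct and follows essentially the same idea as the paper's: produce $W$ as the image of a composition of elements of $\overG\langle F\rangle$ that fixes $F$ pointwise and has image strictly inside $V$. The detour through Lemma \ref{lem:justonecopy} is unnecessary, though: the function $f$ you already obtained from Lemma \ref{lem:fix-copy} satisfies $f[U]=V\subsetneq U$, so the paper simply sets $W:=f[V]=(f\circ f)[U]$, which contains $F$ and is a proper subset of $V$ by injectivity of $f$.
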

\begin{proof}
 Let $V\in \overline{\mathrm{G}}[U]\setminus \{U\}$ and $F\in \finp{V}$. 
  According to Lemma~\ref{lem:fix-copy} there exists a function  $g\in \overG\langle F\rangle$ with  $g[U]= V$. Set $W:= g[V]$. 
\end{proof}

\begin{cor}\label{lem:isolated}
The set of copies $\overline{\mathrm{G}}[U]$ has no isolated point for the powerset topology if and only if $\overline{\mathrm{G}}[U]\not =\{U\}$. 
\end{cor}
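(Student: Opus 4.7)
The plan is to prove the two directions separately. The easy direction is: if $\overline{\mathrm{G}}[U]=\{U\}$, then $\overline{\mathrm{G}}[U]$ is a singleton and its unique point is trivially isolated, so the set does have an isolated point. This handles one direction of the equivalence by contraposition.

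For the nontrivial direction, assume $\overline{\mathrm{G}}[U]\neq\{U\}$, and I will show that no copy $V\in\overline{\mathrm{G}}[U]$ is isolated. Basic open neighbourhoods of $V$ in the induced powerset topology have the form $\overline{\mathrm{G}}[U]\llbracket F,E\rrbracket$ with $F\in\finp{V}$ and $E\in\finp{U\setminus V}$, so to show $V$ is not isolated I must find, for any such $F,E$, a copy $W\neq V$ with $F\subseteq W$ and $W\cap E=\emptyset$.

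I would split into two cases. If $V=U$, then $E$ must be empty, and Lemma~\ref{lem:onecopy} directly says that $U$ is isolated if and only if $\overline{\mathrm{G}}[U]=\{U\}$; since we are assuming the latter fails, $U$ is not isolated. If instead $V\subsetneq U$, I invoke Lemma~\ref{lem:stronglyisolated} applied to $V$ and the finite set $F\subseteq V$ to obtain a copy $W\in\overline{\mathrm{G}}[U]$ with $F\subseteq W\subset V$. The strict inclusion gives $W\neq V$, and since $W\subset V$ and $E\cap V=\emptyset$ we automatically get $W\cap E=\emptyset$. Hence $W\in\overline{\mathrm{G}}[U]\llbracket F,E\rrbracket\setminus\{V\}$, so $V$ is not isolated.

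There is essentially no obstacle here; the work has been done in the preceding lemmas. The only thing one must be careful about is that a basic neighbourhood of $V$ in $\overline{\mathrm{G}}[U]$ prescribes both a finite set $F$ of points it contains and a finite set $E$ of points it avoids. Lemma~\ref{lem:stronglyisolated} shrinks a copy while keeping a prescribed finite subset, so shrinking inside $V$ automatically takes care of the avoidance constraint $E$ for free because $E$ was disjoint from $V$ to begin with. This is the key observation that makes the argument go through cleanly.
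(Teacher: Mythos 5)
Your proof is correct and follows essentially the same route as the paper's: both directions come down to Lemma~\ref{lem:onecopy} together with the ability to shrink a copy while fixing a prescribed finite subset, and your key observation that $E$ is avoided for free because $E\cap V=\emptyset$ is exactly the step $f[W]\cap E\subseteq V\cap E=\emptyset$ in the paper. The only cosmetic difference is that you invoke Lemma~\ref{lem:stronglyisolated} directly on $V$ after a case split, whereas the paper pulls the neighbourhood back to $U$ via $f^{-1}$ and concludes by contradiction.
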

\begin{proof}
Suppose that a copy $V$ is isolated in $\overline{\mathrm{G}}[U]$.   Let $f\in \overG$ be a function with $f[U]=V$.   The set of copies $\overline{\mathrm{G}}[U]\llg F, E \rrg$, containing $F$ and being disjoint from $E$,  is open for every $\{F,E\}\subseteq \finp{U}$.  Because $V$ is isolated,  we may select $F,E$ so that $\overline{\mathrm{G}}[U]\llg F, E\rrg= \{V\}$. Let  $F':= f^{-1}(F)$. Then $\overline{\mathrm{G}}[U]\llg F'\rrg= \{U\}$. For otherwise there exists a copy $W$ with $F'\subseteq W\subset U$. Then $F\subseteq f[W]\subset V$ with $f[W]\cap E\subseteq V\cap E=\emptyset$; in contradiction to $\overline{\mathrm{G}}[U]\llg F, E\rrg= \{V\}$. The equality $\overline{\mathrm{G}}[U]\llg F'\rrg= \{U\}$ implies, according to Lemma \ref {lem:onecopy},  that $\overline{\mathrm{G}}[U]=\{U\}$. 

\end{proof}

\section{Orbits and Types and Typesets}\label{sect:Torb}

Let $U$ be a set and $\mathrm{G}$ be a subgroup of $\mathfrak S(U)$. 

In order to indicate an orbit of the action of the group $\mathrm{G}$ it suffices to choose any element in the orbit. Let $\mathrm{Orb}(\mathrm{G})$ denote the set of different orbits of the action of $\mathrm{G}$ on $U$.  The orbits of the action of the stabilizer subgroups of the group $\mathrm{G}$ will play a role and we will need some particular notation for them. According to Definition \ref{defin:stabilzer} for $F\subseteq U$ the  $F$-stabilizer of $\mathrm{G}$ will be  denoted by $\mathrm{G}\langle F\rangle$, that is:
\[
\mathrm{G}\langle F\rangle=\{g\in \mathrm{G}\mid \forall\, x\in F\, (g(x)=x)\}. 
\]
Then the set of orbits, denoted $\mathrm{Orb}(\mathrm{G}\langle F\rangle)$,  of the subgroup $\mathrm{G}\langle F\rangle$ of $\mathrm{G}$  acting on $U$ forms a partition of the set $U$.  We emphasize the fact that the orbit  of any element $x$ of $F$  w.r.t. the action of  $\mathrm {G}\langle F\rangle$ is $\{x\}$.  A   {\em type} is a pair of the form $\langle F\tr p\rangle$ for which  $F$ is a finite subset of $U$ and $p\in U$.   The set $F$ is the {\em sockel} of the type $\langle F\tr p\rangle$.   Two types $\langle F\tr p\rangle$ and $\langle E\tr q\rangle$ are {\em equal} if $F=E$ and if there exists a function $g\in \mathrm{G}\langle F\rangle$ with $g(p)=q$. The {\em typeset}, denoted $\mathrm{G}\langle F\tr p\rangle$,  of the type $\langle F\tr p\rangle$, is the set in $\mathrm{Orb}(\mathrm{G}\langle F\rangle)$ given by:  
\[
\mathrm{G}\langle F\tr p\rangle:=\{x\in U\mid \exists\, g\in \mathrm{G}\langle F\rangle\, (g(p)=x)\}.  
\]
Note that for all $\{x,y\}\subseteq \mathrm{G}\langle F\tr p\rangle$ there exists a function $f\in \mathrm{G}\langle F\rangle$ with $f(x)=y$. It follows that two types $\langle F\tr p\rangle$ and $\langle E\tr q\rangle$ are {\em equal} if and only if  $F=E$ and if  $\mathrm{G}\langle F\tr p\rangle=\mathrm{G}\langle E\tr q\rangle$. That is, the set of typesets of the types with sockel $F$ is the set $\mathrm{Orb}(\mathrm{G}\langle F\rangle)$.   Two typesets having the same sockel are either disjoint or equal. Note:

\begin{fact}\label{fact:extendtypcl}
Let $\langle F\tr p\rangle$ be a type. Then 
\[
\mathrm{G}\langle F\tr p\rangle=\{x\in U\mid \exists\, f\in \overG\langle F\rangle\, (f(p)=x)\}.  
\]
\end{fact}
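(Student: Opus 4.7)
The plan is to unfold both sides of the claimed equality and compare them using Lemma~\ref{lem:extov}. By the definition of a typeset, the left-hand side is
\[
\mathrm{G}\langle F\tr p\rangle=\{x\in U\mid \exists\, g\in \mathrm{G}\langle F\rangle\, (g(p)=x)\},
\]
so what has to be shown is that replacing $\mathrm{G}\langle F\rangle$ with its adherence $\overG\langle F\rangle$ does not enlarge the set of images of $p$.

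The inclusion $\subseteq$ is immediate: every $g\in \mathrm{G}\langle F\rangle$ lies in $\overG\langle F\rangle$, and therefore any $x=g(p)$ with $g\in \mathrm{G}\langle F\rangle$ is witnessed by some $f\in \overG\langle F\rangle$ (namely $f=g$). So the only work is in the reverse inclusion.

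For $\supseteq$, suppose $x\in U$ and $f\in \overG\langle F\rangle$ satisfy $f(p)=x$. By Lemma~\ref{lem:extov}(1) we have $\overG\langle F\rangle=\overline{\mathrm{G}\langle F\rangle}$, so $f$ is adherent to $\mathrm{G}\langle F\rangle$ in the function topology. Applying the definition of the function topology to the finite set $H=\{p\}$, there exists $g\in \mathrm{G}\langle F\rangle$ with $\restrict{g}{H}=\restrict{f}{H}$, i.e.\ $g(p)=f(p)=x$. Hence $x\in \mathrm{G}\langle F\tr p\rangle$, completing the proof.

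The argument is essentially a one-line consequence of Lemma~\ref{lem:extov}, so there is no real obstacle; the only point to be careful about is invoking the right form of the adherence, namely the equality $\overG\langle F\rangle=\overline{\mathrm{G}\langle F\rangle}$, rather than treating $\overG\langle F\rangle$ as merely the set of functions in $\overG$ fixing $F$ (which a priori could be larger than the closure of $\mathrm{G}\langle F\rangle$).
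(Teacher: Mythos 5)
Your proof is correct and is exactly the intended argument: the paper states this Fact without proof, and the natural justification is precisely the adherence argument you give (either via Lemma~\ref{lem:extov} as you do, or directly by approximating $f$ on the finite set $F\cup\{p\}$ by some $g\in\mathrm{G}$, which then automatically lies in $\mathrm{G}\langle F\rangle$). No gaps.
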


The type $\langle E\tr q\rangle$ is a {\em continuation} of the type $\langle F\tr p\rangle$ if $F\subseteq E$ and if $q\in \mathrm{G}\langle F\tr p\rangle$. It follows that the type $\langle E\tr q\rangle$ is a  continuation of the type $\langle F\tr p\rangle$ if and only if $F\subseteq E$ and $\mathrm{G}\langle F\tr p\rangle\supseteq \mathrm{G}\langle E\tr q\rangle$. Note:

\begin{fact}\label{fact:partitiont}
Let $\mathrm{G}\subseteq \mathfrak{S}(U)$. Let $\langle F\tr p\rangle$ be a type  and let $ E\in \finp{U}$ with $F\subseteq E$. Then the   typesets of the types  $\langle E\tr q\rangle$ which are continuations of the type $\langle F\tr p\rangle$ form a partition of the typeset $\mathrm{G}\langle F\tr p\rangle$. That is:
\[
\mathrm{G}\langle F\tr p\rangle=\bigcup_{q\in \mathrm{G}\langle F\tr p\rangle}\mathrm{G}\langle E\tr q\rangle. 
\]
Noting that  either $\mathrm{G}\langle E\tr q\rangle=\mathrm{G}\langle E\tr q'\rangle$ or $\mathrm{G}\langle E\tr q\rangle\cap\mathrm{G}\langle E\tr q'\rangle=\emptyset$ and that  a type $\langle E\tr q\rangle$ with $F\subseteq E$ is a continuation of the type $\langle F\tr p\rangle$ if and only if $q\in \mathrm{G}\langle F\tr p\rangle$.  
\end{fact}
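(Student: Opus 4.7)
The plan is to derive the statement directly from the inclusion of stabilizer subgroups $\mathrm{G}\langle E\rangle \subseteq \mathrm{G}\langle F\rangle$ that holds whenever $F \subseteq E$, and from the basic fact (already recorded in the paragraph preceding this Fact) that the typesets with a fixed sockel $E$ are exactly the orbits in $\mathrm{Orb}(\mathrm{G}\langle E\rangle)$, hence form a partition of $U$. The only work is to check that these $\mathrm{G}\langle E\rangle$-orbits, once restricted to those intersecting $\mathrm{G}\langle F\tr p\rangle$, are precisely the typesets $\mathrm{G}\langle E\tr q\rangle$ of the continuations of $\langle F\tr p\rangle$, and that they cover $\mathrm{G}\langle F\tr p\rangle$.

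First I would unpack the characterization: by the definition of continuation given just before Fact~\ref{fact:extendtypcl}, a type $\langle E\tr q\rangle$ with $F\subseteq E$ is a continuation of $\langle F\tr p\rangle$ exactly when $q\in \mathrm{G}\langle F\tr p\rangle$. This is essentially the definition, so the indexing set in the displayed union is the correct one.

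Next I would establish the two inclusions in
\[
\mathrm{G}\langle F\tr p\rangle=\bigcup_{q\in \mathrm{G}\langle F\tr p\rangle}\mathrm{G}\langle E\tr q\rangle.
\]
For $\subseteq$: given $x\in \mathrm{G}\langle F\tr p\rangle$, take $q:=x$; since $\id\in \mathrm{G}\langle E\rangle$ fixes $x$, we have $x\in \mathrm{G}\langle E\tr x\rangle$ and the type $\langle E\tr x\rangle$ is a continuation of $\langle F\tr p\rangle$. For $\supseteq$: pick $q\in \mathrm{G}\langle F\tr p\rangle$, so $q=g(p)$ for some $g\in \mathrm{G}\langle F\rangle$; if $y\in \mathrm{G}\langle E\tr q\rangle$, then $y=h(q)$ for some $h\in \mathrm{G}\langle E\rangle\subseteq \mathrm{G}\langle F\rangle$, so $h\circ g\in \mathrm{G}\langle F\rangle$ and $(h\circ g)(p)=y$, giving $y\in \mathrm{G}\langle F\tr p\rangle$.

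Finally, the dichotomy ``$\mathrm{G}\langle E\tr q\rangle=\mathrm{G}\langle E\tr q'\rangle$ or $\mathrm{G}\langle E\tr q\rangle\cap \mathrm{G}\langle E\tr q'\rangle=\emptyset$'' is immediate from the remark already in the paper that typesets with common sockel $E$ are exactly the elements of $\mathrm{Orb}(\mathrm{G}\langle E\rangle)$, which is a partition of $U$. Combining this with the equality above gives the partition of $\mathrm{G}\langle F\tr p\rangle$. There is no real obstacle here; the whole statement is a straightforward orbit-refinement observation, and the only thing worth being careful about is the bookkeeping between the sets $\mathrm{G}\langle F\tr p\rangle$, the group $\mathrm{G}\langle F\rangle$, and the definition of continuation.
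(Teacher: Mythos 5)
Your proof is correct and is exactly the immediate verification the paper intends: the statement appears as a \emph{Fact} with no written proof, being treated as a direct consequence of the definitions of typeset and continuation together with the inclusion $\mathrm{G}\langle E\rangle\subseteq \mathrm{G}\langle F\rangle$ for $F\subseteq E$ and the observation that typesets with sockel $E$ are the orbits of $\mathrm{G}\langle E\rangle$. Your two inclusions and the equal-or-disjoint dichotomy fill in precisely the bookkeeping the authors left implicit.
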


For $1\leq n\in \omega$: Let  $U^n$ denote the set of $n$-tuples with entries in $U$. Let $\mathrm{G}_n$ be the subgroup of $\mathfrak{S}(U^n)$  induced by the  action of the group $\mathrm{G}$ on the set $U^n$. Let $\powerset_n(U)$ denote the set of $n$-element subsets of $U$. Let $\mathrm{G}_{(\powerset_n)}$ be the subgroup of  $\mathfrak{S}(\powerset_n(U))$ induced by the  action of the group $\mathrm{G}$ on the set $\powerset_n(U)$. 

\begin{lem}\label{lem:testoligo} 
The following properties are equivalent:
\begin{enumerate}
\item The set  $\mathrm{Orb}(\mathrm{G}\langle F\rangle)$ is finite for every $F\in \finp{U}$. 
\item The set   $\mathrm{Orb}(\mathrm{G}_n)$ is finite for every  $1\leq n\in \omega$. 
\end{enumerate} 
\end{lem}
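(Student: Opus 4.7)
The proof proceeds by showing the two implications separately, using the elementary observation that for a tuple $\bar a = (a_1,\dots,a_n) \in U^n$ with underlying set $F = \{a_1,\dots,a_n\}$, an element $g \in \mathrm{G}$ fixes $\bar a$ coordinatewise if and only if $g \in \mathrm{G}\langle F\rangle$. Thus stabilizers of tuples and stabilizers of their underlying sets coincide, which lets us pass between the two statements.

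\textbf{Direction $(1)\Rightarrow(2)$.} I would induct on $n$. The base case $n = 1$ is the statement $|\mathrm{Orb}(\mathrm{G})| = |\mathrm{Orb}(\mathrm{G}\langle \emptyset\rangle)| < \omega$, which is just $(1)$ applied to $F = \emptyset$. For the inductive step, suppose $\mathrm{G}_n$ has finitely many orbits on $U^n$, with representatives $\bar a_1,\dots,\bar a_k$, and write $F_i \in \finp{U}$ for the underlying set of $\bar a_i$. Given any $(\bar x, y) \in U^{n+1}$, choose $g \in \mathrm{G}$ with $g(\bar x) = \bar a_i$ for some $i$; then the $\mathrm{G}_{n+1}$-orbit of $(\bar x, y)$ is determined by $i$ together with the $\mathrm{G}\langle F_i\rangle$-orbit of $g(y)$. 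Hence
\[
|\mathrm{Orb}(\mathrm{G}_{n+1})| \leq \sum_{i=1}^{k} |\mathrm{Orb}(\mathrm{G}\langle F_i\rangle)|,
\]
and the right-hand side is finite by hypothesis $(1)$.

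\textbf{Direction $(2)\Rightarrow(1)$.} Let $F = \{x_1,\dots,x_n\} \in \finp{U}$ be arbitrary, and for each $y \in U$ form the tuple $\bar t_y := (x_1,\dots,x_n,y) \in U^{n+1}$. Two elements $y, y' \in U$ lie in the same $\mathrm{G}\langle F\rangle$-orbit if and only if some $g \in \mathrm{G}$ fixes each $x_i$ and satisfies $g(y) = y'$, which is precisely the condition that $\bar t_y$ and $\bar t_{y'}$ lie in the same $\mathrm{G}_{n+1}$-orbit. Therefore the map $y \mapsto [\bar t_y]$ from $\mathrm{Orb}(\mathrm{G}\langle F\rangle)$ to $\mathrm{Orb}(\mathrm{G}_{n+1})$ is well-defined and injective, so $|\mathrm{Orb}(\mathrm{G}\langle F\rangle)| \leq |\mathrm{Orb}(\mathrm{G}_{n+1})|$, which is finite by $(2)$.

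\textbf{Main obstacle.} There is no substantive obstacle; the result is essentially bookkeeping once one recognizes that the coordinatewise stabilizer of a tuple equals $\mathrm{G}\langle F\rangle$ for the underlying set $F$. The only mild point to handle correctly is the induction step in $(1)\Rightarrow(2)$, where one must translate the pair $(\bar x, y)$ by an element of $\mathrm{G}$ before invoking the finiteness of orbits of the stabilizer of $F_i$.
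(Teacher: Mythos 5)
Your proof is correct and follows essentially the same route as the paper: the paper's (very terse) argument is an induction on $n$ resting on exactly the fact you isolate, namely that the $\mathrm{G}_{n+1}$-orbit of an $(n+1)$-tuple is determined by the $\mathrm{G}_n$-orbit of its first $n$ coordinates together with the $\mathrm{G}\langle F\rangle$-orbit of the last coordinate over the underlying set $F$ of a chosen representative. Your write-up merely supplies the details (both directions, well-definedness of the orbit maps) that the paper leaves implicit.
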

\begin{proof}
Note: $\mathrm{Orb}(\mathrm{G}\langle \emptyset\rangle)=\mathrm{Orb}(\mathrm{G})=\mathrm{Orb}(\mathrm{G}_1)$. The equivalence of conditions (1) and (2) follows then via induction on $n$ and the fact that two $n+1$ tuples $\vec{x}=(x_0,x_1,\dots,x_{n-1},x_n)\in U^{n+1}$ and $\vec{y}=(y_0,y_1,\dots,y_{n-1},y_n)\in U^{n+1}$ are in the same orbit of $\mathrm{G}_{n+1}$ if and only if there exists an $n$-tuple $(u_0,u_1,\dots,u_{n-1})\in U^n$ with functions $g$ and $f$ in $\mathrm{G}$ so that $g(x_i)=u_i=f(y_i)$ for all $i\in n+1$ and so that the typesets
$\mathrm{G}\langle \{u_0,u_1,\dots,u_{n-1}\}\tr g(x_n)\rangle$ and $\mathrm{G}\langle \{u_0,u_1,\dots,u_{n-1}\}\tr f(y_n)\rangle$ are equal. 

\end{proof}

\begin{defin}\label{defin:oligomorphic3} 
If the second  condition of Lemma \ref{lem:testoligo}  is satisfied the group is said to be \emph{oligomorphic}. 
\end{defin}

\begin{lem}\label{lem:maptypef}
Let $\langle F\tr x\rangle$ be a type. If $f\in \overG$ then $\mathrm{G}\langle f[F]\tr f(x)\rangle\supseteq f[\mathrm{G}\langle F\tr x\rangle]$. If in addition the function $f$ is onto $U$, that is  $f\in \overG\cap \mathfrak{S}(U)$, then 
 $\mathrm{G}\langle f[F]\tr f(x)\rangle=f[\mathrm{G}\langle F\tr x\rangle]$. 
\end{lem}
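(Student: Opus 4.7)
My plan is to prove the inclusion $\mathrm{G}\langle f[F]\tr f(x)\rangle\supseteq f[\mathrm{G}\langle F\tr x\rangle]$ via a direct conjugation argument, and then derive the equality case by applying the inclusion to $f^{-1}$.

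\medskip

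For the inclusion, I fix an element $y\in \mathrm{G}\langle F\tr x\rangle$, so by definition there exists $g\in \mathrm{G}\langle F\rangle$ with $g(x)=y$. My goal is to exhibit some $h\in \mathrm{G}\langle f[F]\rangle$ sending $f(x)$ to $f(y)$, because that is exactly the condition for $f(y)\in \mathrm{G}\langle f[F]\tr f(x)\rangle$. The difficulty is that $f$ is generally only an injection, not a bijection, so one cannot conjugate by $f$ directly. The fix is to replace $f$ by a genuine group element that agrees with $f$ on a suitable finite set. Since $f\in\overG$ and $F\cup\{x,y\}$ is finite, I can choose $f_{0}\in\mathrm{G}$ with $\restrict{f_{0}}{(F\cup\{x,y\})}=\restrict{f}{(F\cup\{x,y\})}$.

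\medskip

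Then the conjugate $h:=f_{0}\circ g\circ f_{0}^{-1}$ is an element of $\mathrm{G}$. For any $a\in F$, the computation $h(f(a))=h(f_{0}(a))=f_{0}(g(a))=f_{0}(a)=f(a)$ uses $g\in\mathrm{G}\langle F\rangle$, so $h$ fixes $f[F]=f_{0}[F]$ pointwise; hence $h\in \mathrm{G}\langle f[F]\rangle$. Similarly $h(f(x))=h(f_{0}(x))=f_{0}(g(x))=f_{0}(y)=f(y)$. This exhibits $f(y)$ in the orbit of $f(x)$ under $\mathrm{G}\langle f[F]\rangle$, which is precisely $\mathrm{G}\langle f[F]\tr f(x)\rangle$, completing the first inclusion.

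\medskip

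For the equality in the case $f\in\overG\cap\mathfrak{S}(U)$, the key is that $\overG\cap\mathfrak{S}(U)$ is a group (this was noted early in the paper), so $f^{-1}\in\overG$ as well. Applying the first inclusion to $f^{-1}$ with the type $\langle f[F]\tr f(x)\rangle$ gives
\[
\mathrm{G}\langle f^{-1}[f[F]]\tr f^{-1}(f(x))\rangle\supseteq f^{-1}[\mathrm{G}\langle f[F]\tr f(x)\rangle],
\]
and since $f$ is bijective the left hand side simplifies to $\mathrm{G}\langle F\tr x\rangle$. Applying $f$ to both sides yields $f[\mathrm{G}\langle F\tr x\rangle]\supseteq \mathrm{G}\langle f[F]\tr f(x)\rangle$, which combined with the first inclusion gives equality.

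\medskip

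The main (very mild) obstacle is simply recognizing that one cannot conjugate by $f$ directly; once the approximation $f_{0}\in\mathrm{G}$ is introduced, the rest is a short computation. No appeal to Fact \ref{fact:extendtypcl} or Lemma \ref{lem:extov} is needed, though Fact \ref{fact:extendtypcl} could offer an alternative route by working directly with functions in $\overG\langle f[F]\rangle$.
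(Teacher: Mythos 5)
Your proof is correct, and for the first inclusion it is actually more careful than the paper's own argument. The paper conjugates by $f$ directly, writing $f\circ g\circ f^{-1}$ and checking that it fixes $f[F]$ pointwise and sends $f(x)$ to $f(y)$; but since $f\in\overG$ is in general only injective, $f\circ g\circ f^{-1}$ is a partial function with domain $f[U]$ and so is not literally an element of $\mathrm{G}$ (nor of $\overG$), which means an approximation step is still needed to produce the required witness in $\mathrm{G}\langle f[F]\rangle$. Your replacement of $f$ by $f_0\in\mathrm{G}$ with $\restrict{f_0}{(F\cup\{x,y\})}=\restrict{f}{(F\cup\{x,y\})}$ supplies exactly that step: since $f_0[F]=f[F]$, $f_0(x)=f(x)$ and $f_0(y)=f(y)$, the conjugate $f_0\circ g\circ f_0^{-1}$ is a genuine group element doing the same job. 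For the equality case the paper recomputes directly with $f^{-1}\circ h\circ f$ (which lies in $\overG\langle F\rangle$ rather than $\mathrm{G}\langle F\rangle$, so Fact \ref{fact:extendtypcl} is implicitly used there), whereas you derive the reverse inclusion by applying the first part to $f^{-1}\in\overG\cap\mathfrak{S}(U)$; your route is shorter and trades a second computation for the observation that $\overG\cap\mathfrak{S}(U)$ is a group. Both arguments are sound; yours is the cleaner of the two.
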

\begin{proof}
Let $y\in \mathrm{G}\langle F\tr x\rangle$. There exists a function $g\in \mathrm{G}\langle F\rangle$ with $g(x)=y$.  Then $(f\circ g\circ f^{-1})(f(x))=f(y)$ and $(f\circ g\circ f^{-1})(f(z))=f(z)$ for all $z\in F$.   Hence $f(y)\in \mathrm{G}\langle f[F]\tr f(x)\rangle$ and $\mathrm{G}\langle f[F]\tr f(x)\rangle\supseteq f[\mathrm{G}\langle F\tr x\rangle]$.

Let  $f\in \overG\cap \mathfrak{S}(U)$.   Let $y\in \mathrm{G}\langle f[F]\tr f(x)\rangle$. There exists a function $h\in \mathrm{G}\langle f[F]\rangle$ with $y=h(f(x))$. Then $(f^{-1}\circ h\circ f)(z)=z$ for all $z\in F$, and $f(f^{-1}\circ h \circ f)(x)= f \circ f^{-1}\circ h \circ f(x)= h \circ f(x)= y$. Hence $y\in  f[\mathrm{G}\langle F\tr x\rangle]$ and $\mathrm{G}\langle f[F]\tr f(x)\rangle\subseteq f[\mathrm{G}\langle F\tr x\rangle]$. 
\end{proof}

\begin{cor}\label{cor:maptypef}
Let  $\langle F\tr x\rangle$ be a type whose typeset $\mathrm{G}\langle F\tr x\rangle$ is finite. Then $\mathrm{G}\langle f[F]\tr f(x)\rangle=f[\mathrm{G}\langle F\tr x\rangle]$ for all functions $f\in \overG$.   The set 
$\mathrm{G}\langle F\tr x\rangle$ is a subset of all copies $C\in \overG[U]$ with $F\subseteq C$. 
\end{cor}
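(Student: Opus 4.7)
My plan has two parts, one for each assertion in the corollary.

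For the first equality, the inclusion $\mathrm{G}\langle f[F]\tr f(x)\rangle\supseteq f[\mathrm{G}\langle F\tr x\rangle]$ is already given by Lemma \ref{lem:maptypef}. The idea for the reverse inclusion is to exploit the finiteness of $T:=\mathrm{G}\langle F\tr x\rangle$ in order to replace an arbitrary $f\in\overG$ by a genuine bijection from $\mathrm{G}$ that does the same job. Since $F\cup\{x\}\cup T$ is a finite subset of $U$ and $f\in\overG$, there exists $g\in\mathrm{G}$ such that $\restrict{g}{(F\cup\{x\}\cup T)}=\restrict{f}{(F\cup\{x\}\cup T)}$. Now $g\in\mathrm{G}\cap\mathfrak S(U)$, so Lemma \ref{lem:maptypef} (its second, stronger conclusion) applies and yields $\mathrm{G}\langle g[F]\tr g(x)\rangle=g[T]$. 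Because $g$ and $f$ agree on $F\cup\{x\}\cup T$, we have $g[F]=f[F]$, $g(x)=f(x)$, and $g[T]=f[T]$, so $\mathrm{G}\langle f[F]\tr f(x)\rangle=f[T]$, as desired.

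For the second assertion, let $C\in\overG[U]$ with $F\subseteq C$. By Lemma \ref{lem:fix-copy} there exists $f\in\overG\langle F\rangle$ with $f[U]=C$; in particular $f[F]=F$. Since $f\in\overG\langle F\rangle$, Fact \ref{fact:extendtypcl} gives $f(x)\in\mathrm{G}\langle F\tr x\rangle=T$. Typesets with the same sockel $F$ are either equal or disjoint, hence $\mathrm{G}\langle F\tr f(x)\rangle=\mathrm{G}\langle F\tr x\rangle=T$. Applying the first part of the corollary to this $f$, we obtain $T=\mathrm{G}\langle f[F]\tr f(x)\rangle=f[T]\subseteq f[U]=C$, which is what we wanted.

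The only place where one must be slightly careful is the transition from $f\in\overG$ to a bijection $g\in\mathrm{G}$ in the first part: what makes it work is precisely that the typeset $T$ is finite, so that the set on which we need to approximate $f$ by a member of $\mathrm{G}$ is itself finite. I do not foresee any genuine obstacle; the finiteness hypothesis was tailored exactly to make the approximation argument succeed and to allow Lemma \ref{lem:maptypef} to be upgraded from an inclusion for arbitrary $f\in\overG$ to an equality.
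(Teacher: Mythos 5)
Your proposal is correct and follows essentially the same route as the paper: for the first equality, both arguments approximate $f\in\overG$ on the finite set $F\cup\{x\}\cup\mathrm{G}\langle F\tr x\rangle$ by an element of $\mathrm{G}$ and invoke the bijective case of Lemma \ref{lem:maptypef}; for the second assertion, both take $f\in\overG\langle F\rangle$ with $f[U]=C$ from Lemma \ref{lem:fix-copy}, note that $\langle F\tr x\rangle$ and $\langle F\tr f(x)\rangle$ are equal types, and conclude $\mathrm{G}\langle F\tr x\rangle=f[\mathrm{G}\langle F\tr x\rangle]\subseteq C$. The only cosmetic difference is that you justify the equality of the two types via Fact \ref{fact:extendtypcl} while the paper approximates $f$ on $F\cup\{x\}$ by a member of $\mathrm{G}$, which amounts to the same observation.
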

\begin{proof}
The first assertion of the Corollary follows from Lemma \ref{lem:maptypef} because
there exists a function $h\in \mathrm{G}$ for which $\restrict{h}{(F\cup \mathrm{G}\langle F\tr x\rangle)}=\restrict{f}{(F\cup \mathrm{G}\langle F\tr x\rangle)}$. Let $C\in \overG[U]$ with $F\subseteq C$. It follows from Lemma~\ref{lem:fix-copy}  that  there exists a function $f\in \overG\langle F\rangle$, hence with $f[F]=F$, for which $f[U]=C$. Then $\mathrm{G}\langle F\tr x\rangle=\mathrm{G}\langle F\tr f(x)\rangle=f[\mathrm{G}\langle F\tr x\rangle]\subseteq C$. The two types $\langle F\tr x\rangle$ and $\langle F\tr f(x)\rangle$ are equal because there exist a function $g\in \mathrm{G}$ with $\restrict{g}{(F\cup \{x\})}=\restrict{f}{(F\cup \{x\})}$. That is $g\in \mathrm{G}\langle F\rangle$ with $h(x)=f(x)$. 
\end{proof}

\section{A characterization of members of $\overline{\mathrm{G}}[U]$}

\begin{thm}\label{thm:characterization} 
Let $U$ be a countable set and $\mathrm{G}$ be a subgroup of $\mathfrak S(U)$. A subset $C$ of $U$ belongs to $\overline{\mathrm{G}}[U]$ if and only if  for every $F\in \finp{C}$ and every $x\in U\setminus F$ there is a function $g \in  \mathrm{G}\langle F\rangle$ such that  $g(x)\in C$. 
\end{thm}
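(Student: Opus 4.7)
The necessity is immediate from the earlier lemmas: given $C = f[U]$ with $f \in \overG$, Lemma~\ref{lem:fix-copy} lets us choose $f \in \overG\langle F\rangle$, so in particular $f(x) \in C$; Lemma~\ref{lem:extov} gives $f \in \overline{\mathrm{G}\langle F\rangle}$, so applying adherence to the finite set $F \cup \{x\}$ yields $g \in \mathrm{G}\langle F\rangle$ with $g(x) = f(x) \in C$.

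For sufficiency I would build $f \in \overG$ with $f[U] = C$ by a back-and-forth argument. Fix enumerations $U = \{u_n\}_{n<\omega}$ and $C = \{c_n\}_{n<\omega}$, and inductively construct finite $F_n \subseteq U$ together with $g_n \in \mathrm{G}$ satisfying: (i) $g_{n+1}$ agrees with $g_n$ on $F_n$; (ii) $g_n[F_n] \subseteq C$; (iii) $\{u_0,\dots,u_n\} \subseteq F_n$; and (iv) $\{c_0,\dots,c_n\} \subseteq g_n[F_n]$. Start with $F_{-1} := \emptyset$ and $g_{-1}$ any element of $\mathrm{G}$.

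In the \emph{forth step}, assuming $u_{n+1} \notin F_n$, set $E := g_n[F_n] \in \finp{C}$ (by~(ii)) and $y := g_n(u_{n+1}) \in U \setminus E$ (using injectivity of $g_n$). The hypothesis applied to $E$ and $y$ produces $h \in \mathrm{G}\langle E\rangle$ with $h(y) \in C$; then $g_{n+1} := h \circ g_n \in \mathrm{G}$ agrees with $g_n$ on $F_n$ (since $h$ pointwise fixes $E$) and sends $u_{n+1}$ into $C$. The \emph{back step} is essentially free: since $g_{n+1} \in \mathfrak{S}(U)$ is a bijection, one simply adjoins $v := g_{n+1}^{-1}(c_{n+1})$ to the domain without further altering $g_{n+1}$. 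Setting $f(u) := g_n(u)$ whenever $u \in F_n$ then gives a well-defined (by~(i)) injection $U \to C$ whose image is $C$ (by~(ii) and~(iv)), and $f \in \overG$ because every finite restriction of $f$ coincides with some $g_n \in \mathrm{G}$.

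The main obstacle is the forth step: one must extend the current approximation so that a new point of $U$ maps into $C$ while preserving all previously committed values. The crucial observation is that the current image $g_n[F_n]$ already lies in $C$, so the hypothesis, applied with this set as sockel, delivers exactly an element of its pointwise stabilizer that moves $g_n(u_{n+1})$ into $C$; left-multiplication by this element provides the required extension without touching the previous commitments.
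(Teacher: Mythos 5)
Your proposal is correct and follows essentially the same route as the paper: necessity via Lemma~\ref{lem:fix-copy} (plus passing from $\overG\langle F\rangle$ to $\mathrm{G}\langle F\rangle$ on a finite set), and sufficiency by the same back-and-forth in which the hypothesis is applied with the current image $g_n[F_n]\subseteq C$ as sockel and the new approximation is obtained by left-composing with the resulting stabilizer element. The paper's back step likewise just adjoins the $g_{n+1}$-preimage of the next point of $C$ to the domain, so no further comparison is needed.
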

\begin{proof}
Let $(*)$ be the following statement:
\noindent
$(*)$: For every $F\in \finp{C}$ and every $x\in U\setminus F$ there is a function $g \in  \mathrm{G}\langle F\rangle$ such that  $g(x)\in C$. 

Let $C\in \overline{\mathrm{G}}[U]$ and let $F\in \finp{C}$. Let $x\in U\setminus F$.   According to Lemma \ref{lem:fix-copy} there exists a function $h\in \overG\langle F\rangle$ with $h[U]=C$. Hence $h(x)\in C$.  Since $h\in \overG\langle F\rangle$ there exists a function  $g\in  \mathrm{G}\langle F\rangle$ such that $\restrict{h}{(F\cup \{x\})}=\restrict{g}{(F\cup\{x\})}$.  Hence $g(x)\in C$. We conclude that property $(*)$  holds.

Given property $(*)$ we make use of the standard back and forth method in order to prove that there is a function $f\in \overG$  with $f[U]=C$. (See  Fra\"{\i}ss\'e's characterization of homogeneous structures, \cite{Fra}.) Let $\{u_i\mid i\in \omega\}$ be an $\omega$-enumeration of $U$ and let $U_n=\{u_i\mid i\in n\}$.  Let $\{c_i\mid i\in \omega\}$ be an $\omega$-enumeration of $C$ and let $C_n=\{c_i\mid i\in n\}$. For $n\in \omega$ we will construct recursively sets   $V_n\subseteq U$ and functions $l_n\in \mathrm{G}$ with $l_n[V_n]\subseteq C$ and with $f_n:=\restrict{l_n}{V_n}\subseteq \restrict{l_{n+1}}{V_{n+1}}:=f_{n+1}$. Furthermore $U_n\subseteq V_n$ and $C_n\subseteq l_n[V_n]$   for every $n\in \omega$. Then $f:=\bigcup_nf_n\in \overG$ with $f[U]=U$. 

Let $V_0=\emptyset$  and $l_0=\emptyset$. Given  the set $V_n$ and the function  $l_n\in \mathrm{G}$   let $i\in \omega$ be the smallest number for which $u_i\not\in V_n$. Let $j\in \omega$ be the smallest number for which $c_j\not\in l_n[V_n]$. Using property $(*)$ let $g\in \mathrm{G}\langle l_n[V_n]\rangle$ with $g(l_n(u_i))\in C$. Let $V_{n+1}=V_n\cup \{u_i, l^{-1}(g^{-1}(c_j))$ and let $l_{n+1}=g\circ l_n$.  

\end{proof}

\begin{cor}\label{cor:characterization} 
Let $U$ be a countable set and $\mathrm{G}$ be a subgroup of $\mathfrak S(U)$. A subset $C$ of $U$ belongs to $\overline{\mathrm{G}}[U]$ if and only if  $C\cap \mathrm{G}\langle F\tr x\rangle\not=\emptyset$ for every type $\langle F\tr x\rangle$ of $\mathrm{G}$ with $F\subseteq C$.  
\end{cor}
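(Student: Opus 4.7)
The plan is to observe that this corollary is essentially a direct reformulation of Theorem \ref{thm:characterization} in the language of types and typesets from Section \ref{sect:Torb}, so the proof reduces to unpacking definitions and handling a trivial edge case.

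First I would recall the definition $\mathrm{G}\langle F\tr x\rangle=\{y\in U\mid \exists\, g\in \mathrm{G}\langle F\rangle\, (g(x)=y)\}$. This immediately gives the logical equivalence: the statement ``there exists $g\in \mathrm{G}\langle F\rangle$ with $g(x)\in C$'' is exactly the statement $C\cap \mathrm{G}\langle F\tr x\rangle\neq \emptyset$. So the forward translation is purely a rewriting step.

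Next I would reconcile the mismatch in the quantifier range for $x$: Theorem \ref{thm:characterization} quantifies over $x\in U\setminus F$, while the corollary quantifies over all types $\langle F\tr x\rangle$ with $F\subseteq C$, allowing also $x\in F$. The point to make is that for $x\in F$ every $g\in \mathrm{G}\langle F\rangle$ satisfies $g(x)=x$, whence $\mathrm{G}\langle F\tr x\rangle=\{x\}$; since $x\in F\subseteq C$, the intersection $C\cap \mathrm{G}\langle F\tr x\rangle=\{x\}$ is automatically non-empty. Hence the additional types with $x\in F$ add no new content, and the condition of the corollary for $F\subseteq C$ is equivalent to the condition of Theorem \ref{thm:characterization} for the same $F$ and all $x\in U\setminus F$.

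Combining these two observations, the corollary follows. There is no real obstacle here: the only thing to verify carefully is that the definitions from Section \ref{sect:Torb} match the existential formulation in Theorem \ref{thm:characterization}, and that the seemingly broader quantification over all types (including those with $x\in F$) is vacuously satisfied in the extra cases.
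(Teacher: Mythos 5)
Your proposal is correct and matches the paper's treatment: the paper states this corollary without proof, regarding it as an immediate restatement of Theorem \ref{thm:characterization} once one unpacks the definition of the typeset $\mathrm{G}\langle F\tr x\rangle$. Your careful handling of the extra types with $x\in F$ (where $\mathrm{G}\langle F\tr x\rangle=\{x\}\subseteq C$, so the condition is automatic) is exactly the right observation and fills in the only detail the paper leaves implicit.
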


 \begin{lem} \label{lem:bernstein}
 Let $U$ be a countable set and $\mathrm{G}$ be a subgroup of $\mathfrak S(U)$.  If for every $F\in  \finp{U}$ every orbit of the action of $G\langle F\rangle$ on $U\setminus F$ is infinite then there is a coinfinite subset $A$ of $U$ such that $A'\in \overline{\mathrm{G}}[U]$ for every subset $A'$ of $U$ which contains $A$. In particular, there is an embedding of $\powerset (\omega)$ ordered by inclusion into $\overline{\mathrm{G}}[U]$ ordered by inclusion. 
\end{lem}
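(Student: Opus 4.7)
The plan is to invoke the characterization in Theorem~\ref{thm:characterization}: a set $A' \subseteq U$ lies in $\overline{\mathrm{G}}[U]$ iff for every $F \in \finp{A'}$ and every $x \in U\setminus F$ there is $g \in \mathrm{G}\langle F\rangle$ with $g(x) \in A'$. Since types $\langle F\tr x\rangle$ with $x \in F$ are trivially handled (take $g = \id$), the work reduces to constructing a coinfinite $A$ which meets every typeset $\mathrm{G}\langle F\tr x\rangle$ with $F \in \finp{U}$ and $x \in U\setminus F$; any superset $A' \supseteq A$ will then inherit this property and belong to $\overline{\mathrm{G}}[U]$.

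To build $A$, I would enumerate the (countable) collection of pairs $(F_n, x_n)_{n \in \omega}$ with $F_n \in \finp{U}$ and $x_n \in U\setminus F_n$, and construct two interleaved sequences of disjoint finite sets $A_0 \subseteq A_1 \subseteq \cdots$ and $B_0 \subseteq B_1 \subseteq \cdots$ by recursion. At stage $n+1$, use the hypothesis that $\mathrm{G}\langle F_n\tr x_n\rangle$ is infinite to choose some $a_n \in \mathrm{G}\langle F_n\tr x_n\rangle \setminus B_n$; set $A_{n+1} = A_n \cup \{a_n\}$. Then pick any $b_n \in U \setminus (A_{n+1} \cup B_n)$, which exists since $U$ is infinite while $A_{n+1} \cup B_n$ is finite, and set $B_{n+1} = B_n \cup \{b_n\}$. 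Finally let $A := \bigcup_n A_n$ and $B := \bigcup_n B_n$; by construction $B \subseteq U \setminus A$ is infinite, so $A$ is coinfinite, and $A \cap \mathrm{G}\langle F_n\tr x_n\rangle \ni a_n$ for every $n$.

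Now for any $A' \supseteq A$: given $F \in \finp{A'}$ and $x \in U\setminus F$, the pair $(F,x)$ appears as some $(F_n, x_n)$, so $a_n \in A \subseteq A'$ witnesses the condition via Fact~\ref{fact:extendtypcl}. Hence every such $A'$ lies in $\overline{\mathrm{G}}[U]$.

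For the embedding of $(\powerset(\omega); \subseteq)$, since $B$ is countably infinite I would fix a bijection $\varphi \colon \omega \to B$ and define $\Phi \colon \powerset(\omega) \to \overline{\mathrm{G}}[U]$ by $\Phi(X) := A \cup \varphi[X]$. Each $\Phi(X)$ contains $A$ and is therefore a copy by the preceding paragraph; the map is clearly order-preserving, and because $A$ is disjoint from $B$ it is also order-reflecting (hence injective). The only genuine obstacle is the diagonalization itself: we must guarantee that the choice of $a_n$ does not collide with any previously committed ``out'' element and vice versa. This is precisely where the infinite-orbit hypothesis is used — it ensures every diagonalization step succeeds against the finitely many prior constraints.
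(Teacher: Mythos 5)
Your proposal is correct and takes essentially the same approach as the paper: reduce, via the characterization of copies (Corollary \ref{cor:characterization}), to producing a coinfinite set $A$ that meets every typeset $\mathrm{G}\langle F\tr x\rangle$, all of which are infinite by hypothesis. The paper obtains $A$ by citing a Bernstein-type splitting lemma for a countable family of infinite sets, whereas you carry out that splitting explicitly by diagonalization against an enumeration of the types; these are the same argument.
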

\begin{proof} Let $Type(G):= \{Orb(x, G\langle F\rangle) : F\in  [U\setminus \{x\}]^{<\omega}\;  \text{and}\;  x\in U\}$. This is a countable set of subsets of $U$. According to our hypothesis, each one is infinite. Bernstein's property ensures that there is a partition of $U$ into two infinite parts $A$ and $B$  such that each member of $Type (G)$ meets $A$ and $B$ (for this general property of sets, replace the continuum in  Lemma 2, p.514, of \cite {kuratowski} by $\aleph_0$). If $A'$ is any subset of $U$ containing $A$ then each member of $Type (G)$ will meet  $A' $. In particular,  the condition of Corollary \ref{cor:characterization}   holds, hence $A'\in \overline{\mathrm{G}}[U]$.
\end{proof}

\begin{lem}\label{lem:Gsubdelta}
Let $U$ be a countable set and $\mathrm{G}$ be a subgroup of $\mathfrak S(U)$.  Then $\overline{\mathrm{G}}[U]$  is  a $G_{\delta}$-set of $\powerset (U)$ equipped with the powerset topology. 
\end{lem}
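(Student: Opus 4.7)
The plan is to use the characterization of copies given by Theorem \ref{thm:characterization} (equivalently Corollary \ref{cor:characterization}) to write $\overline{\mathrm{G}}[U]$ as a countable intersection of open sets in the powerset topology on $\powerset(U)$. The key observation is that the condition ``if $F\subseteq C$ then $C\cap \mathrm{G}\langle F\tr x\rangle\neq\emptyset$'' is, for each fixed finite $F$ and fixed $x$, an open condition on $C\in\powerset(U)$.

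More precisely, for each pair $(F,x)$ with $F\in\finp{U}$ and $x\in U$, I would define
\[
O_{F,x}:=\{C\in\powerset(U)\mid F\not\subseteq C\}\,\cup\,\{C\in\powerset(U)\mid C\cap \mathrm{G}\langle F\tr x\rangle\neq\emptyset\}.
\]
The first set equals $\bigcup_{y\in F}\powerset(U)\llbracket\emptyset,\{y\}\rrbracket$ and the second equals $\bigcup_{y\in\mathrm{G}\langle F\tr x\rangle}\powerset(U)\llbracket\{y\},\emptyset\rrbracket$, so both are unions of basic open sets of the powerset topology, hence $O_{F,x}$ is open.

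Next I would verify $\overline{\mathrm{G}}[U]=\bigcap_{(F,x)}O_{F,x}$, where the intersection ranges over all $(F,x)\in\finp{U}\times U$. By Corollary \ref{cor:characterization}, $C\in\overline{\mathrm{G}}[U]$ iff for every type $\langle F\tr x\rangle$ with $F\subseteq C$ the typeset $\mathrm{G}\langle F\tr x\rangle$ meets $C$; equivalently $C\in O_{F,x}$ for all $(F,x)$. (When $x\in F\subseteq C$ the typeset is $\{x\}\subseteq C$, so the constraint is trivially met, and when $F\not\subseteq C$ the membership in $O_{F,x}$ is automatic.) Finally, since $U$ is countable, $\finp{U}\times U$ is countable, so $\overline{\mathrm{G}}[U]$ is a countable intersection of open sets, i.e.\ a $G_\delta$-set.

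The only mildly subtle point, and the one I would take care with, is identifying the right family of open sets that exactly capture the criterion of Theorem \ref{thm:characterization}; after that, openness of each $O_{F,x}$ is immediate from the definition of the powerset topology and countability of $\finp{U}\times U$ closes out the argument. No serious obstacle is expected.
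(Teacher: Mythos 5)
Your proposal is correct and is essentially the paper's own proof: the paper defines, for each type $\langle F\tr p\rangle$, the open set $O(\langle F\tr p\rangle)=\bigcup_{a\in F}\powerset(U)\llbracket \emptyset, \{a\} \rrbracket\cup \bigcup_{x\in \mathrm{G}\langle F\tr p\rangle}\powerset(U)\llbracket F\cup\{x\}, \emptyset\rrbracket$ and intersects over the countably many types, invoking Corollary \ref{cor:characterization} exactly as you do. Your $O_{F,x}$ is the same set written with slightly different (equivalent) basic neighbourhoods, so there is nothing to add.
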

\begin{proof} 
A base for the powerset topology consists of a set of subsets of the form  $\powerset(U)\llbracket F, E\rrbracket=\{A\subseteq U\mid \text{$F\subseteq A$ and $E\cap A=\emptyset$}\}$ with $F$ and $E$ ranging over finite subsets of $U$.  Let $\langle F\tr p\rangle$ be a type of $\mathrm{G}$. Then 
\[
O(\langle F\tr p\rangle):=\bigcup_{a\in F}\powerset(U)\llbracket \emptyset, \{a\} \rrbracket\cup \bigcup_{x\in \mathrm{G}\langle F\tr p\rangle}\powerset(U)\llbracket F\cup\{x\}, \emptyset\rrbracket
\]
is an open subset of $\powerset (U)$. Then the intersection of the subsets  $O(\langle F\tr p\rangle)$ of $\powerset (U)$  with $\langle F\tr p\rangle$ ranging over the countable set of types  of $\mathrm{G}$ is a $G_\delta$-set. It follows from Corollary \ref{cor:characterization} that 
\[
\bigcap_{\langle F\tr p\rangle\in \mathrm{Types}(\mathrm{G})} O(\langle F\tr p\rangle)=\overG[U]. 
\] 
\end{proof}

\begin{thm}\label{them:cardcopies} Let $U$ be a countable set and $\mathrm{G}$ be a subgroup of $\mathfrak S(U)$.  Then the cardinality of $\overline{\mathrm{G}}[U]$ is either $1$ or $2^{\aleph_0}$. 
\end{thm}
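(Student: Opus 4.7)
The upper bound $|\overline{\mathrm{G}}[U]|\leq 2^{\aleph_0}$ is immediate from $\overline{\mathrm{G}}[U]\subseteq \powerset(U)$ together with the countability of $U$. Since $U=\id[U]$ always lies in $\overline{\mathrm{G}}[U]$, the set of copies is never empty, so the case $|\overline{\mathrm{G}}[U]|=1$ is exactly the case $\overline{\mathrm{G}}[U]=\{U\}$. The plan is therefore to prove $|\overline{\mathrm{G}}[U]|\geq 2^{\aleph_0}$ whenever $\overline{\mathrm{G}}[U]\neq \{U\}$, by combining the two topological facts already isolated in the preceding lemmas.

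The key observation is that $\powerset(U)$ equipped with the powerset topology, $U$ being countable, is homeomorphic to the Cantor space $2^{\omega}$ and thus is Polish. By Lemma \ref{lem:Gsubdelta}, $\overline{\mathrm{G}}[U]$ is a $G_{\delta}$-subset of $\powerset(U)$, and hence, by Alexandroff's theorem, it is Polish in the induced topology. Under the assumption $\overline{\mathrm{G}}[U]\neq \{U\}$, Corollary \ref{lem:isolated} says that this Polish space has no isolated point. The classical perfect-set theorem (see \cite{Kechris}) then produces a topological embedding of $2^{\omega}$ into $\overline{\mathrm{G}}[U]$, giving $|\overline{\mathrm{G}}[U]|\geq 2^{\aleph_0}$.

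There is no real obstacle once Lemma \ref{lem:Gsubdelta} and Corollary \ref{lem:isolated} are in hand; the argument amounts to invoking the appropriate descriptive-set-theoretic fact. If one prefers a self-contained construction, the same conclusion is obtained by building a Cantor scheme $(V_s)_{s\in 2^{<\omega}}$ of non-empty basic relatively-open sets $V_s=\overline{\mathrm{G}}[U]\llg F_s,E_s\rrg$ with $V_{s0}\cap V_{s1}=\emptyset$: whenever $V_s$ contains two distinct copies $C,C'$ one picks some $x\in U$ lying in exactly one of them and splits $V_s$ into $\overline{\mathrm{G}}[U]\llg F_s\cup \{x\},E_s\rrg$ and $\overline{\mathrm{G}}[U]\llg F_s,E_s\cup \{x\}\rrg$, the absence of isolated points guaranteeing the existence of a second copy at every node. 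Interleaving the splitting with a diagonal book-keeping that at stage $n$ decides the $n$-th element of an enumeration of $U$ and refines the $n$-th open set in a $G_{\delta}$-presentation of $\overline{\mathrm{G}}[U]$ ensures that each branch $\alpha\in 2^{\omega}$ determines a unique copy $C_{\alpha}\in \overline{\mathrm{G}}[U]$, and distinct branches produce distinct copies; this yields $2^{\aleph_0}$ copies and completes the proof.
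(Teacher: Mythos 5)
Your proposal is correct and follows essentially the same route as the paper: the paper likewise combines Lemma \ref{lem:Gsubdelta} (the set of copies is a $G_\delta$ in the Polish space $\powerset(U)$) with Corollary \ref{lem:isolated} (no isolated points when there is more than one copy) and invokes the perfect-set theorem from \cite{Kechris}. The additional hands-on Cantor-scheme sketch is a fine elaboration but not a different argument.
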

\begin{proof}
According to Lemma \ref{lem:isolated} if $\overline{\mathrm{G}}[U]$ has more than one element, it has no isolated point. Since  $\overline{\mathrm{G}}[U]$ is a $G_{\delta}$-subset of the Polish space $\powerset (U)$ it has the cardinality of the continuum. (See for example Theorem (6.2) of \cite{Kechris}.)
\end{proof}

\begin{lem}\label{lem:updirected}
Let $U$ be a countable set and $\mathrm{G}$ be a subgroup of $\mathfrak S(U)$.  Then $\overline{\mathrm{G}}[U]$   is closed under the union of non-empty up-directed families of members of $\overline{\mathrm{G}}[U]$. 
\end{lem}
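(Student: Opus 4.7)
The plan is to reduce the claim to the pointwise characterization of copies provided by Theorem \ref{thm:characterization}. Let $\mathcal{F}=\{C_i : i\in I\}$ be a non-empty up-directed family in $(\overline{\mathrm{G}}[U];\subseteq)$, and set $C:=\bigcup_{i\in I} C_i$. To show $C\in \overline{\mathrm{G}}[U]$, by Theorem \ref{thm:characterization} it suffices to verify: for every $F\in \finp{C}$ and every $x\in U\setminus F$, there exists $g\in \mathrm{G}\langle F\rangle$ with $g(x)\in C$.

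Given such $F$ and $x$, I would first exploit the up-directedness together with the finiteness of $F$. Enumerate $F=\{a_1,\dots,a_n\}$, pick indices $i_1,\dots,i_n\in I$ with $a_k\in C_{i_k}$, and use up-directedness inductively to obtain a single index $j\in I$ with $\{C_{i_1},\dots,C_{i_n}\}\subseteq \overline{\mathrm{G}}[U]_{\subseteq C_j}$; equivalently, $F\subseteq C_j$. Since $C_j\in \overline{\mathrm{G}}[U]$ and $F\in \finp{C_j}$, applying Theorem \ref{thm:characterization} to the copy $C_j$ yields a function $g\in \mathrm{G}\langle F\rangle$ with $g(x)\in C_j$. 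Because $C_j\subseteq C$, we conclude $g(x)\in C$, as required.

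There is no real obstacle here; the only point worth stating carefully is the use of up-directedness to absorb all of $F$ into a single member of $\mathcal{F}$, which works precisely because $F$ is finite. (If one preferred, one could give a slightly slicker formulation: the family $\mathcal{F}$, being up-directed, is cofinal in the set $\{C_i : F\subseteq C_i\}$ as soon as this latter set is non-empty, which it is by the same finite absorption argument.) Once a single member $C_j$ of $\mathcal{F}$ containing $F$ is produced, the characterization theorem does all the work and the conclusion $C\in \overline{\mathrm{G}}[U]$ follows immediately.
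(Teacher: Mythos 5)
Your proof is correct and follows exactly the same route as the paper: reduce to the characterization of copies in Theorem \ref{thm:characterization}, use up-directedness and the finiteness of $F$ to find a single member $C_j$ of the family containing $F$, and then transport the witness $g\in\mathrm{G}\langle F\rangle$ from $C_j$ to the union. The only difference is that you spell out the finite absorption step in more detail than the paper does.
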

\begin{proof} 
Let $\mathcal C:=(C_i; i\in I)$ with $I\not =\emptyset$ be an up-directed family of members of $\overline{\mathrm{G}}[U]$. Let $C:= \bigcup_{i\in I} C_i$. We claim that $C\in \overline{\mathrm{G}}[U]$. For that we apply Theorem \ref{thm:characterization}. Let  $F\in \finp{C}$ and  $x\in U\setminus F$. We have to show  that  there is a $g \in  \mathrm{G}\langle F\rangle$ such that  $g(x)\in C$. This is immediate. Since $\mathcal C$ is up-directed, there is an $i\in I$ such that $F\subseteq C_i$. Since $C_i\in \overline{\mathrm{G}}[U]$ there is a $g_i\in G\langle F\rangle$ such that $g_i(x)\in C_i$. Since $C_i\subseteq C$, $g_i(x)\in C$. It suffices to set $g:=g_i$. 
\end{proof}   

\begin{cor}\label{cor:maximal}
Let $U$ be a countable set and $\mathrm{G}$ be a subgroup of $\mathfrak S(U)$.  Then for every subset $F$ of $U$ and for every copy $C\in \overline{\mathrm{G}}[U]$ with $F\cap C=\emptyset$ there exists  a  maximal copy $C'\in \overline{\mathrm{G}}[U]$ with $C\subseteq C'$ and with  $F\cap C'=\emptyset$.  
\end{cor}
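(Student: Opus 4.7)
The plan is to apply Zorn's lemma to the family
\[
\mathcal{F}:=\{C'\in \overline{\mathrm{G}}[U]\mid C\subseteq C' \text{ and } F\cap C'=\emptyset\},
\]
ordered by inclusion. First I would observe that $\mathcal{F}$ is nonempty, since $C\in \mathcal{F}$ by hypothesis.

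Next, I would verify that every chain in $\mathcal{F}$ admits an upper bound in $\mathcal{F}$. Let $(C_i)_{i\in I}$ be a nonempty chain in $\mathcal{F}$ and set $C^*:=\bigcup_{i\in I}C_i$. A chain is up-directed, so Lemma \ref{lem:updirected} applies and gives $C^*\in \overline{\mathrm{G}}[U]$. Clearly $C\subseteq C^*$, and since $F\cap C_i=\emptyset$ for every $i\in I$ we have
\[
F\cap C^*=F\cap \bigcup_{i\in I}C_i=\bigcup_{i\in I}(F\cap C_i)=\emptyset,
\]
so $C^*\in \mathcal{F}$ is an upper bound of the chain.

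By Zorn's lemma, $\mathcal{F}$ contains a maximal element $C'$. By construction $C\subseteq C'$ and $F\cap C'=\emptyset$, and $C'$ is maximal among copies satisfying these two conditions, which is exactly what is required. There is essentially no obstacle here: the only substantive ingredient is the closure of $\overline{\mathrm{G}}[U]$ under up-directed unions granted by Lemma \ref{lem:updirected}, and the disjointness from $F$ is trivially preserved under arbitrary unions.
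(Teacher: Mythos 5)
Your proof is correct and is exactly the argument the paper intends: the corollary is stated without proof immediately after Lemma \ref{lem:updirected}, and the evident route is the Zorn's lemma argument you give, using that lemma to close chains (which are up-directed) under unions while disjointness from $F$ is preserved trivially.
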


\begin{defin}\label{defin:invarcop}
Let $U$ be a countable set and $\mathrm{G}$ be a subgroup of $\mathfrak S(U)$. Let $f\in \overG$ and $h\in \overG$.  Then  $f(h):=f\circ h\circ f^{-1}$ and $f[\overG]:=\{f(h)\mid h\in \overG\}$ and $f[\mathrm{G}]:=\{f(g)\mid g\in \mathrm{G}\}$. 
\end{defin}

\begin{lem}\label{lem:invarcop}
Let $U$ be a countable set and $\mathrm{G}$ be a subgroup of $\mathfrak S(U)$. Let $f\in \overG$. Then:
\begin{enumerate}
\item $f(k)\circ f(h)=f(k\circ h)$ for all $\{k,h\}\subseteq \overG$.  
\item The function $f(g)$ is a bijection of $f[U]$ onto $f[U]$ with inverse $f(g^{-1})$ and $f[\mathrm{G}]$ is a subgroup of the symmetric group $\mathfrak{S}(f[U])$. 
\item $f: U\to f[U]$ is an isomorphism of the group action of $\mathrm{G}$  mapping every  $g\in \mathrm{G}$ to $f(g)$.  It is also an isomorphism of the monoid action of $\overG$  mapping every  $h\in \overG$ to $f(h)$.
 \item $C\in (f[\overG])[f[U]]$ if and only if $C\in \overG[U]$ and $C\subseteq f[U]$. 
\end{enumerate}
\end{lem}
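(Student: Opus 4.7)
The plan is to treat the four parts in order, using the fact that every $h \in \overG$ is an injection of $U$ into $U$ (since $\overG$ is the closure of a group of permutations), so that $h^{-1}$ exists as a bijection $h[U]\to U$, and the key algebraic identity $f^{-1}\circ f=\id_U$ on $U$.

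For part (1), I first check that $f(h)=f\circ h\circ f^{-1}$ is a well-defined map $f[U]\to f[U]$: starting from $y\in f[U]$, $f^{-1}(y)\in U$, then $h(f^{-1}(y))\in U$, then $f(h(f^{-1}(y)))\in f[U]$. The identity $f(k)\circ f(h)=f(k\circ h)$ is then the straight calculation $(f\circ k\circ f^{-1})\circ(f\circ h\circ f^{-1}) = f\circ k\circ(f^{-1}\circ f)\circ h\circ f^{-1}= f\circ(k\circ h)\circ f^{-1}$, where I use that $h[U]\subseteq U$ so $f^{-1}\circ f$ acts as the identity on $h(f^{-1}(y))$.

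Part (2) follows immediately from (1): for $g\in\mathrm{G}$ one has $g^{-1}\in\mathrm{G}$, and (1) gives $f(g)\circ f(g^{-1})=f(g\circ g^{-1})=f(\id_U)=\id_{f[U]}$, and symmetrically on the other side, so $f(g)$ is a bijection of $f[U]$ with inverse $f(g^{-1})$. Together with closure under composition (again from (1)), and the presence of the identity $f(\id_U)=\id_{f[U]}$, this makes $f[\mathrm{G}]$ a subgroup of $\mathfrak{S}(f[U])$. Part (3) is then essentially a restatement: the map $g\mapsto f(g)$ is a group homomorphism by (1), it is injective because $f$ is injective (so $f(g)=f(g')$ on $f[U]$ forces $g=g'$ on $U$), and it is equivariant in the sense $f(g(x))=f(g)(f(x))$ for all $x\in U$; the same argument with $\overG$ in place of $\mathrm{G}$ yields the monoid statement.

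Part (4) is where the real content lies, and is the step I expect to require care. The forward direction is routine: if $C=f(h)[f[U]]$ for some $h\in\overG$, then $C=f[h[f^{-1}[f[U]]]]=f[h[U]]=(f\circ h)[U]$, and since $\overG$ is closed under composition, $f\circ h\in\overG$, so $C\in\overG[U]$ and $C\subseteq f[U]$. For the converse, given $C=g[U]$ with $g\in\overG$ and $g[U]\subseteq f[U]$, I apply Lemma~\ref{lem:restriction} (with the roles of $f$ and $g$ swapped from that lemma's statement) to conclude that $h:=f^{-1}\circ g\in\overG$. Then $f(h)[f[U]]=(f\circ h\circ f^{-1})[f[U]]=(f\circ h)[U]=g[U]=C$, completing the identification. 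The only subtle point is verifying that $f^{-1}\circ g$ is well defined on all of $U$, which is exactly the hypothesis $g[U]\subseteq f[U]$ used by Lemma~\ref{lem:restriction}.
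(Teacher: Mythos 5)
Your proof is correct and follows essentially the same route as the paper's: the same direct computations for (1)--(3), and for (4) the identification $C=(f\circ h)[U]$ in the forward direction and $h:=f^{-1}\circ g\in\overG$ in the converse. The only difference is cosmetic: where you invoke Lemma~\ref{lem:restriction} for the converse of (4), the paper reproves that fact inline by the same finite-approximation argument, so your version is if anything slightly cleaner.
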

\begin{proof}\\ 
\noindent
(1): Let $\{k,h\}\subseteq \overG$. Then $(f(k)\circ f(h))=f\circ k\circ f^{-1}\circ f\circ h\circ f^{-1}=f\circ (k\circ h)\circ f^{-1} = f(k\circ h)$. \\
\noindent
(2): Let $g\in \mathrm{G}$ and $x\in f[U]$. Clearly $f(g)\in (f[U])^{f[U]}$.  It follows from (1) that
   $f(g)\circ f(g^{-1})= f(g\circ g^{-1})=f(\mathrm{id}_U)=\mathrm{id}_{f[U]}$ and hence $f(g)\big(f(g^{-1})(x)\big)=x$.  Implying that the function $f(g)$ is a bijection of $f[U]$ onto $f[U]$ with inverse $f(g^{-1})$. Hence $f[\mathrm{G}]$ is a subgroup of the symmetric group $\mathfrak{S}(f[U])$.    \\
\noindent
(3): It remains to prove that if $\{k,h\}\subseteq \overG$ with $k\not=h$ then $f(k)\not=f(h)$. Let $x\in U$ with $k(x)\not=h(x)$. If $(f(k))(f(x))=(f(h))(f(x))$ then $(f\circ k\circ f^{-1})(f(x))=(f\circ k\circ f^{-1})(f(x))$ and $f\circ k(x)=f\circ h(x)$. The function $f$ is one-to-one on $U$ and hence the contradiction $k(x)=h(x)$. It follows that $f$ is one-to-one on $\overG$. \\
(4): Let $V=f[U]$ and  $C\in (f[\overG])[V]$. Then $C\subseteq V$ and  there exists  a function $h\in \overG$ with $f(h)[V]=C$. Hence $(f\circ h)[U]=(f\circ h\circ f^{-1})[V]=C$. Which in turn implies that $C\in \overG[U]$. Let then $C\in \overG[U]$ and $C\subseteq V$. Let $h\in\overG$ with $h[U]=C$. Let $F$ be a finite subset of $U$.  Let $k=f^{-1}\circ h$. There exists a function $g\in \mathrm{G}$ for which $\restrict{g}{F}=\restrict{h}{F}$. There exists a function $l\in \mathrm{G}$ with $\restrict{f}{(f^{-1}(h[F]))}=\restrict{l}{(f^{-1}(h[F]))}$. Then $\restrict{k}{F}=\restrict{(l^{-1}\circ g)}{F}$. We conclude that $k\in \overG$ and that $f(k)[f[U]]=f(k)[V]=(f\circ k\circ f^{-1})[V]=(h\circ f^{-1})[V]=h[U]=C$. 

\end{proof}

Item (4) of Lemma \ref{lem:invarcop} states that a subset $S$ of a copy $C=f[U]$ for $f\in \overG[U]$  is a copy in respect to the action of $\mathrm{G}$ on $U$ if and only if it is also a copy in respect to the action of $f[G]$ on $C$. That is being a copy is "invariant`` of that interpretation.

\begin{lem}\label{lem:invpropacttp}
Let $U$ be a countable set and $\mathrm{G}$ be a subgroup of $\mathfrak S(U)$.  Let $f\in \overG$ and   $F\cup \{x,y\}$ be a finite subset of $f[U]$.   Then: 
\begin{enumerate}
\item The types $\langle F\tr x\rangle$ and $\langle F\tr y\rangle$ are equal with respect to the group action of $\mathrm{G}$ on $U$  if and only if they are equal with respect to the group action of the group $f[\mathrm{G}]$ on $f[U]$. 
\item If $F\cup \{x\}\subseteq f[U]:=C$ for $f\in \overG$, then: 
\begin{enumerate}
\item $\mathrm{G}\langle F\tr x\rangle\cap C=(f[\mathrm{G}])\langle F\tr x\rangle$. 
\item If $\mathrm{G}\langle F\tr x\rangle$ is finite then $\mathrm{G}\langle F\tr x\rangle=(f[\mathrm{G}])\langle F\tr x\rangle \subseteq C$.
\end{enumerate} 
\end{enumerate}
\end{lem}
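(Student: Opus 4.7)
The plan is to exploit the isomorphism of group actions induced by $f$, as described in Lemma~\ref{lem:invarcop}(3), but with care: the point $F\cup\{x,y\}$ lies in $f[U]$, so types over $F$ for the $\mathrm{G}$-action on $U$ are not literally transported by $f$ from types over $f^{-1}[F]$ for the $\mathrm{G}$-action on $U$. The bridging device is the defining property of $f\in\overG$: on any finite subset of $U$, $f$ agrees with some honest element of $\mathrm{G}$.

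For part~(1), I would prove the forward direction as follows. Suppose $g\in\mathrm{G}\langle F\rangle$ satisfies $g(x)=y$. Let $F':=f^{-1}[F]$, $x':=f^{-1}(x)$, $y':=f^{-1}(y)$; these are well-defined since $F\cup\{x,y\}\subseteq f[U]$ and $f$ is injective. Since $f\in\overG$, choose $\phi\in\mathrm{G}$ that agrees with $f$ on the finite set $F'\cup\{x',y'\}$. Set $h:=\phi^{-1}\circ g\circ\phi\in\mathrm{G}$. For $u\in F'$, $\phi(u)=f(u)\in F$, so $g(\phi(u))=\phi(u)$ and $h(u)=u$; for $u=x'$, $\phi(x')=x$, $g(x)=y$, and $\phi^{-1}(y)=y'$, so $h(x')=y'$. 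Thus $h\in\mathrm{G}\langle F'\rangle$ with $h(x')=y'$, hence $f(h)=f\circ h\circ f^{-1}\in f[\mathrm{G}]$ fixes $F$ and maps $x$ to $y$. The reverse direction is entirely symmetric: starting from $h\in\mathrm{G}$ with $f(h)$ fixing $F$ and sending $x$ to $y$ (so $h$ fixes $F'$ and sends $x'$ to $y'$), the conjugate $g:=\phi\circ h\circ \phi^{-1}\in\mathrm{G}$ (with the same $\phi$) is checked, by the same computation, to fix $F$ and send $x$ to $y$.

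Part~(2)(a) is then a pointwise consequence of (1). The inclusion $(f[\mathrm{G}])\langle F\tr x\rangle\subseteq C$ is automatic since the $f[\mathrm{G}]$-action is on $f[U]=C$. For $z\in C$, by part~(1) applied with $y:=z$, we have $z\in(f[\mathrm{G}])\langle F\tr x\rangle$ iff the types of $x$ and $z$ over $F$ are equal in $f[\mathrm{G}]$ iff they are equal in $\mathrm{G}$ iff $z\in\mathrm{G}\langle F\tr x\rangle$. This gives the required equality. Part~(2)(b) follows by combining (2)(a) with Corollary~\ref{cor:maptypef}: if $\mathrm{G}\langle F\tr x\rangle$ is finite, then it is a subset of every copy containing $F$, in particular $\mathrm{G}\langle F\tr x\rangle\subseteq C$, so $\mathrm{G}\langle F\tr x\rangle\cap C=\mathrm{G}\langle F\tr x\rangle$, and (2)(a) finishes the proof.

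The only real point of care is the verification in part~(1): one must resist the temptation to conjugate directly by $f$, since $f$ is not in $\mathrm{G}$ (and $\mathrm{G}$ need not be closed under conjugation by arbitrary elements of $\overG$). The choice of a ``finite approximation'' $\phi\in\mathrm{G}$ to $f$ on the relevant finite set is what makes the argument stay inside $\mathrm{G}$; this is in the same spirit as the back-and-forth-style trick used in the proof of Lemma~\ref{lem:fix-copy}. Beyond this, the computations are routine conjugation checks.
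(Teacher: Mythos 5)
Your proof is correct and follows essentially the same route as the paper: part (1) is the substance, obtained by conjugating the witnessing permutation across $f$ after replacing $f$ by a finite approximation inside $\mathrm{G}$, and (2)(a), (2)(b) then follow pointwise from (1) together with Corollary~\ref{cor:maptypef}. The only (cosmetic) difference is the order of operations in (1): the paper conjugates $g$ by $f$ and then approximates $f^{-1}\circ g\circ f$ by some $k\in\mathrm{G}$, whereas you first approximate $f$ by $\phi\in\mathrm{G}$ on the relevant finite set and conjugate by $\phi$ — the same idea, and your version makes the existence of the approximating element slightly more transparent.
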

\begin{proof}\\
(1): Let $F\cup \{x,y\}\subseteq f[U]:=C$ for $f\in \overG$. Assume $\langle F\tr x\rangle=\langle F\tr y\rangle$. Then there exists a function $g\in \mathrm{G}\langle F\rangle$ with $g(x)=y$. Let the function  $k\in \mathrm{G}$ be such that $\restrict{k}{(f^{-1}[F\cup \{x,y\}])}=\restrict{(f^{-1}\circ g\circ f)}{(f^{-1}[F\cup \{x,y\}])}$. Then $k\in \mathrm{G}\langle f^{-1}[F]\rangle$ and $k(f^{-1}(x))=f^{-1}(y)$ and $\restrict{(f\circ k\circ f^{-1})}{(F\cup \{x,y\})}=\restrict{g}{(F\cup \{x,y\})}$. Hence, the function $f(k)$ is a witness for $\langle F\tr x\rangle=\langle F\tr y\rangle$ in respect to the group action of $f[G]$ on $f[U]$.  On the other hand let $k\in \mathrm{G}$ for which the function $f(k)$  is a witness for $\langle F\tr x\rangle=\langle F\tr y\rangle$ in respect to the group action of $f[G]$ on $f[U]$. Then $f(k)(a)=a$ for all $a\in F$ and $f(k)(x)=y$. Let $g\in \mathrm{G}$ be such that $\restrict{g}{(F\cup \{x,y\})}=\restrict{f(k)}{(F\cup \{x,y\})}$. Then  the function $g$ is a witness for $\langle F\tr x\rangle=\langle F\tr y\rangle$ in respect to the group action of $\mathrm{G}$ on $U$.

Item (2)(a) follows from Item (1). Item (2)(b) follows from Item~(2)(a) and Corollary \ref{cor:maptypef}.  
\end{proof}

\section{Algebraic closure}\label{sect:algebcl}

 Let $U$ be a countable set and $\mathrm{G}$ be a subgroup of $\mathfrak S(U)$. For $F$ a finite subset of $U$ we defined in Section \ref{sect:Torb} the set $\mathrm{Orb}(\mathrm{G}\langle F\rangle)$ to be the set of typesets with sockel $F$, that is the set of orbits of the group $\mathrm{G}\langle F\rangle$.   Let $F\subseteq U$, not necessarily finite.  The {\em algebraic closure of $F$} is the set, denoted  $\mathfrak{ac}(F)$, and defined by setting: 
\[
\mathfrak{ac}(F)=
\begin{cases}
\bigcup\{T\in \mathrm{Orb}(\mathrm{G}\langle F\rangle)\mid \text{$T$ is finite}\},    &\text{if $\bullet:$  $F$ is finite;}\\
 \bigcup\{\mathfrak{ac}(F')\mid F'\in \finp{F}\},        &\text{$\bullet \bullet:$ in general.}
\end{cases}
\]
Definition $\bullet \bullet$ is consistent with Definition $\bullet$ because if a typeset $T$ of a type  is finite then  every continuation of this type has a typeset which is a subset of $T$ and hence finite. (Fact \ref{fact:partitiont}.)  Finally, set $K_{\mathfrak{ac}}= \mathfrak{ac}(\emptyset)$. Note that $x\in K_{\mathfrak{ac}}$ if and only if the set $\mathrm{G}\langle \emptyset\tr x\rangle$, that is  the orbit of $\mathrm{G}$ containing $x$ is finite. 

\begin{lem}\label{lem:acfinoligo}
If $\mathrm{G}$ is oligomorphic then $\mathfrak{ac}(F)$ is finite for every finite subset $F$ of $U$. 
\end{lem}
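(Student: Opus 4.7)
The plan is to unfold the definitions and reduce the statement to a trivial cardinality fact. By Lemma \ref{lem:testoligo}, the group $\mathrm{G}$ being oligomorphic is equivalent to the condition that $\mathrm{Orb}(\mathrm{G}\langle F\rangle)$ is finite for every $F\in \finp{U}$. This is the only hypothesis I need; no further structural information about $\mathrm{G}$ will come into play.

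Next I would invoke the definition of $\mathfrak{ac}(F)$ for finite $F$, namely
\[
\mathfrak{ac}(F)=\bigcup\{T\in \mathrm{Orb}(\mathrm{G}\langle F\rangle)\mid T\text{ is finite}\}.
\]
The set on the right is a union indexed by a \emph{subset} of $\mathrm{Orb}(\mathrm{G}\langle F\rangle)$, hence by a finite family. Each member of that family is by selection a finite set. Therefore $\mathfrak{ac}(F)$ is a finite union of finite sets and is itself finite. The recursive clause $\bullet\bullet$ of the definition is not needed here since $F$ is already finite, and the consistency remark after the definition shows the two clauses agree.

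The main (and only) observation is that oligomorphicity bounds the \emph{number} of orbits of $\mathrm{G}\langle F\rangle$, while the selection in the definition of $\mathfrak{ac}(F)$ additionally bounds the \emph{size} of each orbit used. There is no real obstacle: the conclusion is immediate once both pieces are combined. I expect the entire argument to take a couple of lines.
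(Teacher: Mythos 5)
Your proof is correct and follows exactly the same route as the paper: oligomorphicity (via Lemma \ref{lem:testoligo}/Definition \ref{defin:oligomorphic3}) makes $\mathrm{Orb}(\mathrm{G}\langle F\rangle)$ finite, so $\mathfrak{ac}(F)$ is a finite union of finite sets. Nothing is missing.
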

\begin{proof}
By Definition \ref{defin:oligomorphic3}, if $\mathrm{G}$ is oligomorphic the set $\mathrm{Orb}(\mathrm{G}\langle F\rangle)$ of typesets with sockel $F$ is finite. Hence $\mathfrak{ac}(F)$ is then a finite union of finite sets. 
\end{proof} 

For $\mathrm{R}$ a relational structure let $\mathrm{Age}(\mathrm{R})$, the \emph{age} of $\mathrm R$,  denote the class of structures which are isomorphic to a finite induced substructure of $\mathrm{R}$. 
A  group $\mathrm{G}$ is \emph{algebraically finite} if $\mathfrak{ac}(F)$ is finite for every finite subset $F$ of $U$. There are many algebraically finite groups which are not oligomorphic. In fact there are many non oligomorphic groups such that $\mathfrak{ac}(F)=F$ for every finite $F$. If, in this case, a structure $\mathrm{R}$  is a countable homogeneous relational structure such that $\aut (R)= \overG\cap \mathfrak{S}$ then this amounts to the fact that $\mathrm{Age}(\mathrm{R})$  satisfies the \emph{disjoint amalgamation property} (DAP). See \cite{Fra}.  

\begin{lem} \label{lem:acl-kernel}
Let $x\in U$. Then $x\in K_{\mathfrak{ac}}$ if and only if  there exists a finite subset $F$ of $U$ such that $x\in g[F]$ for every function  $g\in \mathrm{G}$. 
\end{lem}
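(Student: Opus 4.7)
The plan is to prove both directions by directly unpacking the definitions: $x \in K_{\mathfrak{ac}}$ means the $\mathrm{G}$-orbit of $x$, namely $O_x := \mathrm{G}\langle \emptyset \tr x\rangle = \{g(x) \mid g \in \mathrm{G}\}$, is finite, so the task reduces to showing this finiteness is equivalent to the existence of the claimed set $F$.

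For the forward direction, I would assume $O_x$ is finite and simply take $F := O_x$. The key observation is that $\mathrm{G}$ is a group, so for any $g \in \mathrm{G}$ the element $g^{-1}(x)$ lies in $O_x = F$, and therefore $x = g(g^{-1}(x)) \in g[F]$.

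For the backward direction, suppose $F \in \finp{U}$ is such that $x \in g[F]$ for every $g \in \mathrm{G}$. For each $g \in \mathrm{G}$ pick $y_g \in F$ with $g(y_g) = x$; then $y_g = g^{-1}(x)$, so $g^{-1}(x) \in F$. Since $\mathrm{G}$ is closed under inversion, the set $\{g^{-1}(x) \mid g \in \mathrm{G}\}$ coincides with $O_x$, and we conclude $O_x \subseteq F$. Hence $O_x$ is finite and $x \in K_{\mathfrak{ac}}$.

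There is essentially no obstacle here; the only point to be mindful of is exploiting that $\mathrm{G}$ is a group (not merely a monoid) so that the orbit of $x$ can be described equivalently as $\{g(x) \mid g \in \mathrm{G}\}$ or $\{g^{-1}(x) \mid g \in \mathrm{G}\}$, which is what converts the hypothesis ``$x \in g[F]$ for every $g$'' into containment of the whole orbit inside $F$.
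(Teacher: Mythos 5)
Your proof is correct. The forward direction is the same as the paper's (take $F$ to be the $\mathrm{G}$-orbit of $x$, which is finite and invariant). Your converse, however, is more direct than the paper's: you observe that $x\in g[F]$ forces $g^{-1}(x)\in F$, and since inversion is a bijection of $\mathrm{G}$ onto itself this immediately gives $\mathrm{G}\langle\emptyset\tr x\rangle=\{g^{-1}(x)\mid g\in\mathrm{G}\}\subseteq F$, hence the orbit is finite. The paper instead takes $F$ of minimal cardinality with the stated property, forms $F':=\bigcap\{g[F]\mid g\in\mathrm{G}\}$, shows $F'$ is $\mathrm{G}$-invariant and finite, and concludes by minimality that $F'$, $F$, and the orbit coincide. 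Your route avoids the minimality and invariance bookkeeping entirely and proves the same thing (indeed it shows the orbit sits inside \emph{every} witnessing $F$, not just a minimal one); the paper's argument yields the slightly stronger observation that a minimal witness is exactly the orbit, but that extra information is not needed for the lemma. No gaps.
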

\begin{proof}
Suppose that $x\in K_{\mathfrak{ac}}$. Set $F:=\mathrm{G}\langle \emptyset\tr x\rangle$,  the orbit of $x$ w.r.t. the action of $\mathrm{G}$. By definition, $F$ is finite and    $g[F]=F$ for every $g\in \mathrm{G}$.  

Conversely, let $F\in \finp{U}$ of minimal cardinality for which $x\in g[F]$ for every function  $g\in \mathrm{G}$. This means  that $x\in F':= \bigcap\{ g[F]: g\in \mathrm{G}\}$. We have $h[F']= \bigcap\{ h\circ g[F]: g\in \mathrm{G}\} \supseteq  F'$ for every $h\in G$ and  since $F'$ is finite, $h[F']=F'$. Since $x\in F'$, the orbit of $x$  w.r.t. the action of $\mathrm{G}$ is included into $F'$. By minimality, this orbit, $F'$ and $F$ are equal. Hence, $x\in K_{\mathfrak{ac}}$. \end{proof}

Let $\mathrm{R}$ be a relational structure with domain $U$. The \emph{kernel} of $\mathrm{R}$ is the set $\mathrm{Ker}(\mathrm{R}):=\{x\in U: \mathrm{Age} (\mathrm{R}_{-x})\not = \mathrm{Age}(\mathrm{R})\}$. ($\mathrm{R}_{-x}:=\restrict{\mathrm{R}}{(U\setminus\{x\})}$.)

\begin{cor}
Let $\mathrm{R}$ be a homogeneous relational structure and $\mathrm{G}= \aut(\mathrm{R})$. Then $K_{\mathfrak{ac}}= \mathrm{Ker}(\mathrm{R})$. 
\end{cor}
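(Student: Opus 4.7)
The plan is to prove both inclusions by direct appeal to Lemma \ref{lem:acl-kernel} together with homogeneity of $\mathrm{R}$, using the standard fact that, up to isomorphism, every element of $\mathrm{Age}(\mathrm{R})$ has the form $\restrict{\mathrm{R}}{F}$ for some finite $F\subseteq U$, and similarly every element of $\mathrm{Age}(\mathrm{R}_{-x})$ has the form $\restrict{\mathrm{R}}{F}$ with $F\subseteq U\setminus\{x\}$.

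For $K_{\mathfrak{ac}}\subseteq \mathrm{Ker}(\mathrm{R})$, suppose $x\in K_{\mathfrak{ac}}$ and let $F=\mathrm{G}\langle\emptyset\tr x\rangle$ be the (finite) $\mathrm{G}$-orbit of $x$, so by Lemma \ref{lem:acl-kernel} we have $x\in g[F]$ for every $g\in\mathrm{G}$. I claim that the finite structure $\restrict{\mathrm{R}}{F}\in\mathrm{Age}(\mathrm{R})$ is not in $\mathrm{Age}(\mathrm{R}_{-x})$. Indeed, if it were, there would exist $F'\subseteq U\setminus\{x\}$ and an isomorphism $\varphi\colon \restrict{\mathrm{R}}{F}\to \restrict{\mathrm{R}}{F'}$; by homogeneity of $\mathrm{R}$, $\varphi$ extends to an automorphism $g\in\mathrm{G}$, and then $g[F]=F'\subseteq U\setminus\{x\}$, contradicting Lemma \ref{lem:acl-kernel}. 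Hence $\mathrm{Age}(\mathrm{R}_{-x})\neq\mathrm{Age}(\mathrm{R})$, i.e.\ $x\in\mathrm{Ker}(\mathrm{R})$.

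For $\mathrm{Ker}(\mathrm{R})\subseteq K_{\mathfrak{ac}}$, I argue the contrapositive. Assume $x\notin K_{\mathfrak{ac}}$; by Lemma \ref{lem:acl-kernel} this means that for every $F\in\finp{U}$ there exists $g\in\mathrm{G}$ with $x\notin g[F]$. Now let $\mathrm{A}\in\mathrm{Age}(\mathrm{R})$; choose $F\in\finp{U}$ with $\mathrm{A}\cong \restrict{\mathrm{R}}{F}$ and pick $g\in\mathrm{G}$ such that $x\notin g[F]$. Since $g$ is an automorphism, $\restrict{\mathrm{R}}{g[F]}\cong\restrict{\mathrm{R}}{F}\cong\mathrm{A}$, and $g[F]\subseteq U\setminus\{x\}$, so $\mathrm{A}\in\mathrm{Age}(\mathrm{R}_{-x})$. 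The reverse inclusion $\mathrm{Age}(\mathrm{R}_{-x})\subseteq\mathrm{Age}(\mathrm{R})$ is automatic, whence $\mathrm{Age}(\mathrm{R}_{-x})=\mathrm{Age}(\mathrm{R})$ and $x\notin\mathrm{Ker}(\mathrm{R})$.

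The only delicate step is the use of homogeneity in the first direction to convert a hypothetical partial isomorphism of $\restrict{\mathrm{R}}{F}$ into $\mathrm{R}_{-x}$ into a global automorphism of $\mathrm{R}$; once this conversion is available, everything else is a direct application of Lemma \ref{lem:acl-kernel}. I do not anticipate any other obstacle.
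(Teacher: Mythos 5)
Your proof is correct and follows essentially the same route as the paper: both directions reduce to Lemma \ref{lem:acl-kernel}, with homogeneity invoked exactly where the paper uses it (to upgrade an embedding of $\restrict{\mathrm{R}}{F}$ into $\mathrm{R}_{-x}$ to an automorphism moving the orbit $F$ off $x$). You merely write the two inclusions in contrapositive form and spell out the homogeneity step that the paper leaves implicit.
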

\begin{proof}Let $x\in \mathrm{Ker}(\mathrm{R})$. By definition there exists a finite subset $F$ of $U$ containing $x$ such that $\restrict{\mathrm{R}}{F}$ does not embed into $\mathrm{R}_{-x}$. It follows that $x\in g[F]$ for every $g\in \mathrm{G}$. According to Lemma \ref{lem:acl-kernel}, the element $x\in K_{\mathfrak{ac}}$. Conversely, suppose that $x\in K_{\mathfrak{ac}}$. Then $F:= \mathrm{G}\langle \emptyset\tr x\rangle$ is finite. Since $F$ is preserved by every member of $\mathrm{G}$, there is no embedding of $\restrict{\mathrm{R}}{F}$   into $\mathrm{R}_{-x}$.  
\end{proof}
 
\begin{lem}
The map $F\rightarrow \mathfrak{ac}(F)$ is an algebraic closure operator. 
\end{lem}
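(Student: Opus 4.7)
I need to verify the four standard axioms of an algebraic closure operator: extensivity, monotonicity, idempotence, and the algebraic (finitary) property. Extensivity is immediate since for $x \in F$ the orbit $\mathrm{G}\langle F \tr x\rangle = \{x\}$ is finite (every element of $F$ is fixed by $\mathrm{G}\langle F\rangle$), so $F \subseteq \mathfrak{ac}(F)$, and extensivity for infinite $F$ follows from clause $\bullet\bullet$ applied to singletons $F' = \{x\}$. The algebraic property $\mathfrak{ac}(F) = \bigcup\{\mathfrak{ac}(F') \mid F' \in \finp{F}\}$ is literally clause $\bullet\bullet$ of the definition once we note that for a finite $F$ this bigger union collapses to $\mathfrak{ac}(F)$ by monotonicity.

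For monotonicity I would treat finite sets first. If $F \subseteq F'$ are finite and $T$ is a finite orbit of $\mathrm{G}\langle F\rangle$, then $\mathrm{G}\langle F'\rangle \subseteq \mathrm{G}\langle F\rangle$ refines the orbit partition, so each $\mathrm{G}\langle F'\rangle$-orbit meeting $T$ is contained in $T$ and hence finite; thus $T \subseteq \mathfrak{ac}(F')$ and $\mathfrak{ac}(F) \subseteq \mathfrak{ac}(F')$. The infinite case follows at once from clause $\bullet\bullet$.

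The main obstacle is idempotence, $\mathfrak{ac}(\mathfrak{ac}(F)) = \mathfrak{ac}(F)$. The non-trivial inclusion $\mathfrak{ac}(\mathfrak{ac}(F)) \subseteq \mathfrak{ac}(F)$ reduces, via the algebraic property and monotonicity, to the following finite statement:

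\smallskip
\noindent\emph{Key step.} If $F_0 \in \finp{U}$ and $y \in \mathfrak{ac}(F_0)$, then $\mathfrak{ac}(F_0 \cup \{y\}) \subseteq \mathfrak{ac}(F_0)$.
\smallskip

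\noindent To prove this I would invoke orbit--stabilizer. Let $H = \mathrm{G}\langle F_0\rangle$ and let $O_y = \mathrm{G}\langle F_0 \tr y\rangle$ be the (finite) $H$-orbit of $y$, of cardinality $n$. The pointwise stabilizer of $\{y\}$ inside $H$ is exactly $\mathrm{G}\langle F_0 \cup \{y\}\rangle$, and it has index $n$ in $H$, so one can write $H = \bigsqcup_{i=1}^{n} h_i \mathrm{G}\langle F_0 \cup \{y\}\rangle$. Consequently, for any $z$, the $H$-orbit $H \cdot z$ is the union of the $n$ translates $h_i \cdot (\mathrm{G}\langle F_0 \cup \{y\}\rangle \cdot z)$, so if $z$ has a finite orbit under $\mathrm{G}\langle F_0 \cup \{y\}\rangle$ of size $m$, its $H$-orbit has size at most $nm$, hence is finite; i.e.\ $z \in \mathfrak{ac}(F_0)$.

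To conclude idempotence, take $x \in \mathfrak{ac}(\mathfrak{ac}(F))$. By clause $\bullet\bullet$ applied twice there exist finite $G_0 \subseteq \mathfrak{ac}(F)$ with $x \in \mathfrak{ac}(G_0)$, and a finite $F_0 \subseteq F$ with $G_0 \subseteq \mathfrak{ac}(F_0)$. Iterating the key step along an enumeration $G_0 = \{y_1, \dots, y_k\}$ (each $y_{i+1} \in \mathfrak{ac}(F_0) \subseteq \mathfrak{ac}(F_0 \cup \{y_1,\dots,y_i\})$ by monotonicity), one gets
\[
\mathfrak{ac}(F_0 \cup G_0) \subseteq \mathfrak{ac}(F_0 \cup \{y_1,\dots,y_{k-1}\}) \subseteq \dots \subseteq \mathfrak{ac}(F_0),
\]
and since $x \in \mathfrak{ac}(G_0) \subseteq \mathfrak{ac}(F_0 \cup G_0)$ by monotonicity, we conclude $x \in \mathfrak{ac}(F_0) \subseteq \mathfrak{ac}(F)$.
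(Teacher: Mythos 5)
Your proof is correct and follows the same overall reduction as the paper: extensivity and the finitary property come straight from the definition, monotonicity from the refinement of orbit partitions, and idempotence is reduced via clause $\bullet\bullet$ to a statement about finite sets. The one place where you go beyond the paper is your ``key step'': the paper's proof, after assembling the finite set $S:=\bigcup_{q\in Q}Q_q$, simply asserts $x\in\mathfrak{ac}(S)$, which is exactly the finite transitivity statement that needs justification (note that $\mathrm{G}\langle S\rangle$ need not be contained in $\mathrm{G}\langle Q\rangle$, since $Q\not\subseteq S$ in general). Your orbit--stabilizer argument --- that $\mathrm{G}\langle F_0\cup\{y\}\rangle$ has finite index in $\mathrm{G}\langle F_0\rangle$ when $y\in\mathfrak{ac}(F_0)$, so finite orbits of the smaller group yield finite orbits of the larger one, iterated over an enumeration of $G_0$ --- supplies precisely the missing content, and is the right way to fill that gap.
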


\begin{proof}
We have trivially $F\subseteq \mathfrak{ac}(F)\subseteq \mathfrak{ac} (F')$ for every $F\subseteq F'$. In particular $\mathfrak{ac}(F)\subseteq \mathfrak{ac}(\mathfrak{ac}(F))$. We have $\mathfrak{ac}(F) =\mathfrak{ac}(\mathfrak{ac}(F))$. Indeed, let $x\in \mathfrak{ac}(\mathfrak{ac}(F))$ Then by definition there is some finite $Q\subseteq \mathfrak{ac}(F)$ such that $x\in \mathfrak{ac}(Q)$. Select for each $q\in Q$ some finite $Q_q\subseteq F$ such that $q\in \mathfrak{ac}(Q_q)$.  Set $S:= \bigcup_{q\in Q}Q_q$. Then $x\in \mathfrak{ac}(S)\subseteq \mathfrak{ac}(F)$. Thus, $\mathfrak{ac}$ defines a closure operator. According to the second clause in the definition, it is algebraic.  
\end{proof} 

This operator does not necessarily have  the exchange property. Here is an example where the group is transitive. 
Let $U:= [\N]^2$ and $\mathrm{G}$ be the group of permutations of $U$ induced by permutations of $\N$.  Let $u:= \{x,y\}\in U$, $u':= \{x',y'\}\in U$ be two disjoint pairs of $\N$ and $v:= \{ x,x'\}$. Then $v\not\in \mathfrak{ac}(\{u\})= \{u\}$, and $v\in \mathfrak{ac}(\{u,u'\}$ while $u'\not \in \mathfrak{ac}(\{u, v\})$.

\section{Ranked closure}\label{sect:rankedcl}

 Let $U$ be a countable set and $\mathrm{G}$ be a subgroup of $\mathfrak S(U)$.   A type $\langle F\tr p\rangle$ has {\em rank} 0 if  $\mathrm{G}\langle F\tr p\rangle$ is finite. Let $\mathfrak{R}_0$ be the set of types $\langle F\tr p\rangle$ having rank 0. We define recursively sets $\mathfrak{R}_\alpha$ of types for $\alpha$ an ordinal. If for every ordinal $\beta<\alpha$ the set $\mathfrak{R}_\beta$ has been determined then the type $\langle F\tr p\rangle$ is an element of $\mathfrak{R}_\alpha$ if there exists a finite subset $F'\supseteq F$ of $U$ so that every continuation  $\langle F'\tr q\rangle$ of $\langle F\tr p\rangle$ is a type in $\mathfrak{R}_\beta$ for some ordinal $\beta<\alpha$. Note that  $\mathfrak{R}_\beta\subseteq \mathfrak{R}_\alpha$ for all $\beta < \alpha$; letting $F'=F$. A type $\langle F\tr p\rangle$ has {\em rank} $\alpha$ if $\langle F\tr p\rangle\in \mathfrak{R}_\alpha$ and $\langle F\tr p\rangle\not\in \mathfrak{R}_\beta$ for any $\beta<\alpha$. Hence if a type $\langle E\tr x\rangle \in \mathfrak{R}_\gamma$, then the rank of the type $\langle E\tr x\rangle$ is less than or equal to $\gamma$.   A type $\langle F\tr p\rangle$ is {\em unranked} if there is no ordinal $\alpha$ with $\langle F\tr p\rangle\in \mathfrak{R}_\alpha$.  

\vskip 5pt

Note that a type $\langle F\tr p\rangle$ is unranked if and only if:

\noindent 
\textbf{(p)}: For every finite set $F'$ with $F\subseteq F'\subseteq U$ exists a continuation\\
\phantom{.} \hskip 21pt $\langle F' \tr q\rangle$ of $\langle F\tr p\rangle$ which is unranked. 

\begin{lem}\label{lem:contrankrank}
If a type $\langle F\tr p\rangle$ is ranked and has rank $\alpha$ then every continuation $\langle E\tr q\rangle$ of the type $\langle F\tr p\rangle$ is ranked and has rank less than or equal to $\alpha$.  
\end{lem}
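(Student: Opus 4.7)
The plan is to proceed by transfinite induction on the rank $\alpha$ of the type $\langle F\tr p\rangle$, using Fact~\ref{fact:partitiont} to ensure that typesets only shrink along continuations.

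For the base case $\alpha=0$, the typeset $\mathrm{G}\langle F\tr p\rangle$ is finite. If $\langle E\tr q\rangle$ is any continuation, then $F\subseteq E$ and $q\in\mathrm{G}\langle F\tr p\rangle$; applied to the refinement from sockel $F$ to sockel $E$, Fact~\ref{fact:partitiont} gives $\mathrm{G}\langle E\tr q\rangle\subseteq\mathrm{G}\langle F\tr q\rangle=\mathrm{G}\langle F\tr p\rangle$, hence $\mathrm{G}\langle E\tr q\rangle$ is finite and $\langle E\tr q\rangle\in\mathfrak{R}_0$.

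For the inductive step, assume the statement holds for every ordinal $<\alpha$ and that $\langle F\tr p\rangle$ has rank $\alpha$. Choose the witnessing finite $F'\supseteq F$ so that every continuation $\langle F'\tr r\rangle$ of $\langle F\tr p\rangle$ lies in $\mathfrak{R}_\beta$ for some $\beta<\alpha$. Let $\langle E\tr q\rangle$ be an arbitrary continuation of $\langle F\tr p\rangle$. To show $\langle E\tr q\rangle\in\mathfrak{R}_\alpha$, set $E':=E\cup F'$, a finite set containing $E$, and consider any continuation $\langle E'\tr s\rangle$ of $\langle E\tr q\rangle$. Since $E\subseteq E'$ and $s\in\mathrm{G}\langle E\tr q\rangle$, the chain inclusions $\mathrm{G}\langle E\tr q\rangle\subseteq \mathrm{G}\langle F\tr q\rangle=\mathrm{G}\langle F\tr p\rangle$ (from Fact~\ref{fact:partitiont} and the equality of typesets given that $q\in\mathrm{G}\langle F\tr p\rangle$) yield $s\in\mathrm{G}\langle F\tr p\rangle$. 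Thus $\langle F'\tr s\rangle$ is a continuation of $\langle F\tr p\rangle$, so it has some rank $\beta<\alpha$. Because $F'\subseteq E'$ and trivially $s\in\mathrm{G}\langle F'\tr s\rangle$, the type $\langle E'\tr s\rangle$ is a continuation of $\langle F'\tr s\rangle$, and the inductive hypothesis gives $\langle E'\tr s\rangle\in\mathfrak{R}_\beta\subseteq\mathfrak{R}_\alpha$ with rank $\leq\beta<\alpha$. Since $s$ was arbitrary, $E'$ witnesses that $\langle E\tr q\rangle\in\mathfrak{R}_\alpha$.

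The substantive point — and essentially the only one that could slip — is the verification that the ``transitivity of continuations'' works correctly at both sockel levels: namely that $\langle F'\tr s\rangle$ continues $\langle F\tr p\rangle$ whenever $\langle E'\tr s\rangle$ continues $\langle E\tr q\rangle$ and $\langle E\tr q\rangle$ continues $\langle F\tr p\rangle$. Once this bookkeeping (which rests entirely on Fact~\ref{fact:partitiont} and the definition of equality of typesets) is in place, the induction runs cleanly and no other subtlety intervenes.
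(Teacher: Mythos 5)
Your proof is correct and follows essentially the same route as the paper: transfinite induction on the rank, with the same auxiliary sockel $E'=E\cup F'$ and the same two-level continuation bookkeeping (your argument just makes explicit the appeal to the inductive hypothesis that the paper leaves implicit). No issues.
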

\begin{proof}
By induction on the rank of the type  $\langle F\tr p\rangle$. If $\mathrm{G}\langle F\tr p\rangle$ is finite then $\mathrm{G}\langle E\tr q\rangle \subseteq \mathrm{G}\langle F\tr p\rangle$ is finite.  Let $\alpha>0$. If the continuation $\mathrm{G}\langle E\tr q\rangle$ is finite  then $\langle E\tr q\rangle$ is ranked and has rank $0<\alpha$. Otherwise we procede  as follows:   There exists a finite subset $F'\supseteq F$ of $U$ so that the rank $\beta(x)$ of every continuation  $\langle F'\tr x\rangle$ of $\langle F\tr p\rangle$ is smaller than $\alpha$.  The set $E'=E\cup F'$ is finite. The set of typesets $\mathrm{G}\langle E'\tr x\rangle$ for $x\in \mathrm{G}\langle E\tr q\rangle$  forms a partition of the typeset $\mathrm{G}\langle E\tr q\rangle$. Each one of the types $\langle E'\tr x\rangle$ is a continuation of the type $\langle F'\tr  x\rangle$ having rank $\beta(x)<\alpha$. Implying that the type $\langle E\tr q\rangle\in \mathfrak{R}_\alpha$ and hence that the rank of the type $\langle E\tr q \rangle$ is less than or equal to $\alpha$.    
\end{proof}

\begin{cor}\label{cor:contrankrank}
Let $\langle F\tr p\rangle$ be a ranked type and $F\subseteq E\in\finp{U}$. Then the set $\mathrm{G}\langle F\tr p\rangle$ is  the union over the set of typesets of the  ranked types $\langle E\tr x\rangle$  with $x\in \mathrm{G}\langle F\tr p\rangle$. 

\end{cor}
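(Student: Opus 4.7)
The plan is to combine Fact \ref{fact:partitiont} with Lemma \ref{lem:contrankrank} in a straightforward way; the corollary is essentially a direct juxtaposition of the two prior results.

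First I would invoke Fact \ref{fact:partitiont} applied to the type $\langle F\tr p\rangle$ and the finite set $E\supseteq F$. This yields the decomposition
\[
\mathrm{G}\langle F\tr p\rangle=\bigcup_{x\in \mathrm{G}\langle F\tr p\rangle}\mathrm{G}\langle E\tr x\rangle,
\]
where each $\langle E\tr x\rangle$ with $x\in\mathrm{G}\langle F\tr p\rangle$ is a continuation of $\langle F\tr p\rangle$, and two such typesets are either equal or disjoint. This already gives the equality of sets asserted in the corollary, except that one still needs the word \emph{ranked} in front of the types on the right-hand side.

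To justify this adjective, I would then apply Lemma \ref{lem:contrankrank}: since the type $\langle F\tr p\rangle$ is ranked, say of rank $\alpha$, every continuation of it is ranked and of rank at most $\alpha$. In particular, every $\langle E\tr x\rangle$ appearing in the union above is ranked. Substituting ``continuations $\langle E\tr x\rangle$ of $\langle F\tr p\rangle$'' by the equivalent description ``types $\langle E\tr x\rangle$ with $x\in\mathrm{G}\langle F\tr p\rangle$'' (which was the equivalence recorded immediately after Fact \ref{fact:partitiont}) completes the proof.

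There is no substantive obstacle here: once Lemma \ref{lem:contrankrank} is in hand, the corollary is a one-line remark, and the only care required is to be explicit that the indexing set of the partition produced by Fact \ref{fact:partitiont} consists exactly of the elements $x\in\mathrm{G}\langle F\tr p\rangle$, which is how continuations at sockel $E$ are parametrized.
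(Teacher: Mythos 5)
Your proof is correct and is exactly the intended argument: the paper states this corollary without proof precisely because it is the immediate juxtaposition of Fact \ref{fact:partitiont} (the partition of $\mathrm{G}\langle F\tr p\rangle$ into the typesets of its continuations with sockel $E$) and Lemma \ref{lem:contrankrank} (every continuation of a ranked type is ranked). Nothing further is needed.
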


\begin{lem}\label{lem:rankext}
Let $\langle F\tr p\rangle$ be a type and $g\in \mathrm{G}$. Then the type $\langle F\tr p\rangle$ is ranked if and only if the type $\langle g[F]\tr g(p)\rangle$ is ranked. The type $\langle F\tr p\rangle$ has rank $\alpha$  if and only if the type $\langle g[F]\tr g(p)\rangle$ has rank $\alpha$. 
\end{lem}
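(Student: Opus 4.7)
The statement says that the rank is invariant under the (diagonal) action of $\mathrm{G}$ on sockels and points. The plan is to prove simultaneously both assertions by transfinite induction on the ordinal $\alpha$, using the following single key equivariance fact: for every $g\in\mathrm{G}$ and every type $\langle F\tr p\rangle$,
\[
 g\bigl[\mathrm{G}\langle F\tr p\rangle\bigr]=\mathrm{G}\langle g[F]\tr g(p)\rangle.
\]
This is exactly Lemma~\ref{lem:maptypef} applied to $f=g\in\overG\cap\mathfrak S(U)$. A consequence I will use repeatedly is that the map $q\mapsto g(q)$ is a bijection from the set of continuations $\langle F'\tr q\rangle$ of $\langle F\tr p\rangle$ (with $F\subseteq F'$) onto the set of continuations $\langle g[F']\tr g(q)\rangle$ of $\langle g[F]\tr g(p)\rangle$.

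\noindent\textbf{Base case ($\alpha=0$).} A type has rank $0$ iff its typeset is finite. Since $g$ is a bijection of $U$, the displayed equality gives $|\mathrm{G}\langle F\tr p\rangle|=|\mathrm{G}\langle g[F]\tr g(p)\rangle|$, so $\langle F\tr p\rangle\in\mathfrak R_0$ iff $\langle g[F]\tr g(p)\rangle\in\mathfrak R_0$.

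\noindent\textbf{Inductive step.} Assume the equivalence ``$\langle E\tr q\rangle\in\mathfrak R_\beta$ iff $\langle g[E]\tr g(q)\rangle\in\mathfrak R_\beta$'' holds for every type and every $g\in\mathrm{G}$ for all $\beta<\alpha$. Suppose $\langle F\tr p\rangle\in\mathfrak R_\alpha$; choose a witnessing finite set $F'\supseteq F$ such that every continuation $\langle F'\tr q\rangle$ of $\langle F\tr p\rangle$ has rank some $\beta(q)<\alpha$. Set $E':=g[F']\supseteq g[F]$. By the equivariance fact, every continuation of $\langle g[F]\tr g(p)\rangle$ at sockel $E'$ has the form $\langle E'\tr g(q)\rangle$ for a unique continuation $\langle F'\tr q\rangle$ of $\langle F\tr p\rangle$; by the inductive hypothesis this has rank $\beta(q)<\alpha$. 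Hence $\langle g[F]\tr g(p)\rangle\in\mathfrak R_\alpha$. The converse implication is obtained by applying the same argument to $g^{-1}\in\mathrm{G}$ and the type $\langle g[F]\tr g(p)\rangle$.

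\noindent\textbf{Conclusion.} From the equivalence ``rank $\le\alpha$ on one side iff rank $\le\alpha$ on the other'' we deduce both parts: $\langle F\tr p\rangle$ is ranked iff $\langle g[F]\tr g(p)\rangle$ is ranked, and the ranks coincide when they exist. I expect no real obstacle; the only point to be careful about is making sure the bijection of continuations holds setwise (i.e.\ that distinct typesets above $F'$ correspond to distinct typesets above $g[F']$), which follows from the fact that conjugation by $g$ is a group isomorphism $\mathrm{G}\langle F'\rangle\to\mathrm{G}\langle g[F']\rangle$, as in Lemma~\ref{lem:invarcop}.
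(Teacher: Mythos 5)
Your proof is correct and follows essentially the same route as the paper's: transfinite induction on the rank, using the equivariance $g[\mathrm{G}\langle F\tr p\rangle]=\mathrm{G}\langle g[F]\tr g(p)\rangle$ from Lemma~\ref{lem:maptypef} (the paper invokes Corollary~\ref{cor:maptypef} for the base case) to transfer continuations over the witnessing set $F'$ to $g[F']$, with the reverse inequality obtained by applying $g^{-1}$. Your formulation of the induction as ``membership in $\mathfrak R_\alpha$ is preserved'' is a slightly cleaner packaging of the paper's argument, but the content is identical.
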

\begin{proof}
By induction on the rank of the type  $\langle F\tr p\rangle$.  It follows from Corollary \ref{cor:maptypef} that the type $\langle F\tr p\rangle$ has rank 0 if and only if the type $\langle g[F]\tr g(p)\rangle$ has rank 0. Let the type $\langle F\tr p\rangle$ have rank $\alpha>0$. Then there exists a finite $F'\supseteq F$ for which every continuation $\langle F'\tr q\rangle$ of the type $\langle F\tr p\rangle$ has a rank, say $\beta_q$,  which is  smaller than $\alpha$. Also $g[F']\supseteq g[F]$. Let $\langle g[F']\tr s\rangle$ be a continuation of the type   $\langle g[F]\tr g(p)\rangle$. Then $s\in \mathrm{G}\langle g[F]\tr g(p)\rangle$. It follows,  using Lemma \ref{lem:maptypef}, that $g^{-1}(s)\in \mathrm{G}\langle F\tr p\rangle$ and hence that the type $\langle F'\tr g^{-1}(s)\rangle$ is a continuation of the type $\langle F\tr p\rangle$. It has a rank $\beta_{g^{-1}(s)}<\alpha$. Then, via the induction, the rank of the type $\langle g[F']\tr s\rangle$ is also equal to $\beta_{g^{-1}(s)}<\alpha$. Hence the rank of the type $\langle g[F]\tr g(p)\rangle$ is equal to $\bar{\alpha}$ less than or equal to $\alpha$. Applying $g^{-1}$ to the type $\langle g[F]\tr g(p)\rangle$ we obtain $\alpha\leq \bar{\alpha}$. Hence  $\langle F\tr p\rangle$ is ranked  if and only if $\langle g[F]\tr g(p)\rangle$ is ranked and both types have the same rank. 
\end{proof}

\begin{lem}\label{lem:contall}
Let $\langle F\tr p\rangle$ be a ranked type. Then $\mathrm{G}\langle F\tr p\rangle\subseteq C$ for every $C\in \overG[U]$ with $F\subseteq C$.   
\end{lem}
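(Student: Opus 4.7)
The plan is to prove the lemma by transfinite induction on the rank $\alpha$ of the type $\langle F \tr p\rangle$.

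For the base case $\alpha = 0$, the typeset $\mathrm{G}\langle F \tr p\rangle$ is finite by definition, and Corollary \ref{cor:maptypef} directly asserts that a finite typeset with sockel $F$ is contained in every copy $C$ with $F \subseteq C$.

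For the inductive step, assume the statement for all ranks strictly less than $\alpha$, and fix a copy $C \in \overG[U]$ with $F \subseteq C$. By the definition of rank, there exists a finite $F'' \supseteq F$ such that every continuation $\langle F''\tr q\rangle$ of $\langle F\tr p\rangle$ has rank strictly less than $\alpha$. The key technical step is to transport $F''$ inside $C$ without losing this rank-lowering property: using Lemma \ref{lem:fix-copy}, pick $f \in \overG\langle F\rangle$ with $f[U] = C$, then choose $g \in \mathrm{G}$ agreeing with $f$ on $F''$. Since $g$ fixes $F$, we have $g \in \mathrm{G}\langle F\rangle$, and setting $F' := g[F'']$ yields a finite set with $F \subseteq F' \subseteq C$.

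I then verify that $F'$ inherits the defining property of $F''$. Let $\langle F'\tr r\rangle$ be any continuation of $\langle F\tr p\rangle$, so $r \in \mathrm{G}\langle F\tr p\rangle$. Because $g \in \mathrm{G}\langle F\rangle$ (hence also $g^{-1} \in \mathrm{G}\langle F\rangle$), one checks that $g^{-1}(r) \in \mathrm{G}\langle F\tr p\rangle$; thus $\langle F''\tr g^{-1}(r)\rangle$ is a continuation of $\langle F\tr p\rangle$ via $F''$ and so has rank strictly less than $\alpha$. Lemma \ref{lem:rankext} applied to $g$ then transfers this bound to $\langle F'\tr r\rangle$. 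With this in hand, for any $x \in \mathrm{G}\langle F\tr p\rangle$, the type $\langle F'\tr x\rangle$ is a continuation of $\langle F\tr p\rangle$ with rank $<\alpha$ and with sockel $F' \subseteq C$, so the inductive hypothesis applied to $\langle F'\tr x\rangle$ and $C$ yields $x \in \mathrm{G}\langle F'\tr x\rangle \subseteq C$. Since $x$ was arbitrary, $\mathrm{G}\langle F\tr p\rangle \subseteq C$.

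The main obstacle I anticipate is precisely this transport step: the witness $F''$ supplied by the definition of rank need not lie inside $C$, and one must produce a replacement $F' \subseteq C$ that still witnesses the rank drop. Resolving this is what forces the simultaneous use of Lemma \ref{lem:fix-copy} (to obtain an embedding of $U$ into $C$ fixing $F$) and Lemma \ref{lem:rankext} (the $\mathrm{G}$-invariance of the rank); once these are combined, the induction closes routinely.
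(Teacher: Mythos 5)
Your proof is correct and follows essentially the same route as the paper's: transfinite induction on the rank, base case via Corollary \ref{cor:maptypef}, and in the inductive step the same transport of the witness set into $C$ by combining Lemma \ref{lem:fix-copy} with a $g\in\mathrm{G}$ agreeing with the embedding on $F''$, together with the rank-invariance of Lemma \ref{lem:rankext}. The only cosmetic difference is that the paper phrases the final step via the partition of $\mathrm{G}\langle F\tr p\rangle$ into the typesets $\mathrm{G}\langle g[F'']\tr g(q)\rangle$ (Fact \ref{fact:partitiont}), whereas you argue pointwise for each $x\in\mathrm{G}\langle F\tr p\rangle$; the two are equivalent.
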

\begin{proof}
Let $F\subseteq C\in \overG[U]$.  According to Lemma \ref{lem:fix-copy} there is a function  $h\in \overG\langle F\rangle$ with $h[U]=C$. We proceed via induction on the rank of the type  $\langle F\tr p\rangle$.  In the case that the typeset   $\mathrm{G}\langle F\tr p\rangle$ is finite the Lemma follows from Corollary \ref{cor:maptypef}.

 Let $0<\alpha$ be an ordinal for which $\mathrm{G}\langle F''\tr q\rangle\subseteq C$ for all types $(F''\tr q)$ of rank $\beta<\alpha$ and with $F''\subseteq C$.  Let $\langle F\tr p\rangle$ be a type of rank $\alpha$.  There exists then a finite subset $F'\subseteq U$ with  $F'\supseteq F$ and  so that  every continuation $\langle F'\tr q\rangle$ of $\langle F\tr p\rangle$  has a rank, say $\beta(q)$, which is smaller than $\alpha$. Let $Q:=q\in \mathrm{G}\langle F\tr p\rangle$ be the set of those elements $q$. It follows from Fact \ref{fact:partitiont} that the sets $\mathrm{G}\langle F'\tr q\rangle$ for $q\in Q$ form a partition of the set $\mathrm{G}\langle F\tr p\rangle$. Let $g\in \mathrm{G}$ be such that $\restrict{h}{F'}=\restrict{g}{F'}$.  Then $g[F']\subseteq C$ and   $g\in \mathrm{G}\langle F\rangle$. Consequently the sets $\mathrm{G}\langle g[F']\tr g(q)\rangle$  for $q\in Q$ form a partition of the set $\mathrm{G}\langle F\tr p\rangle$. Each of the types $\langle g[F']\tr g(q)\rangle$ is ranked and its rank is equal to $\beta(q)$ according to Lemma \ref{lem:rankext}. Then $\mathrm{G}\langle h[F']\tr  h(q)\rangle\subseteq C$ using the induction assumption. Implying that $\mathrm{G}\langle F\tr p\rangle\subseteq C$.  

\end{proof}

Let $\{f,h\}\subseteq  \overG$ and $C=f[U]=h[U]$. Then the group $f[G]\subseteq \mathfrak{S}(f[U])$ acts on the set $f[U]$. See Definition \ref{defin:invarcop} and Lemma \ref{lem:invarcop}.  It follows from Lemma  \ref{lem:invpropacttp}  that for every finite $F\subseteq C$ and $x\in C\setminus F$ the type $\langle F\tr x\rangle$ with respect to the group action $f[\mathrm{G}]$ is the same as with respect to the group action $h[\mathrm{G}]$. Hence we can define:

\begin{defin}\label{defin:typunderf}
Let $\langle F\tr p\rangle$ be a type and $f\in \overG$ with $F\cup \{p\}\subseteq f[U]$:=C. Then  $\langle F\tr p\rangle_C$ denotes this type with sockel $F$ in respect to  the action of the group $f[\mathrm{G}]$.   
\end{defin}
Note that, for every $f\in \overG$,  the typeset of the type $\langle F\tr p\rangle_C$ is the set
\[
\mathrm{G}\langle F\tr p\rangle\cap C=(f[\mathrm{G}])\langle F\tr p\rangle_C=\{(f(g))(x)\mid g\in \mathrm{G}\}. 
\]

\begin{lem}\label{lem:contallinvariant}
Let  $f\in \overG$ with $f[U]:=C$. Then a type $\langle F\tr p\rangle$, with $f\cup \{p\}\subseteq C$,  is ranked if and only if the type $\langle F\tr p\rangle_C$ is ranked with respect to the group action $f[\mathrm{G}]$. The type $\langle F\tr p\rangle$ has rank $\alpha$  if and only if the type $\langle F\tr p\rangle_C$ has rank $\alpha$ with respect to the group action $f[\mathrm{G}]$. 
\end{lem}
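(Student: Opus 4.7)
The plan is to prove the stronger statement by transfinite induction on $\alpha$: for every ordinal $\alpha$ and every type $\langle F\tr p\rangle$ with $F\cup\{p\}\subseteq C$, the type lies in $\mathfrak{R}_\alpha$ with respect to $\mathrm{G}$ if and only if $\langle F\tr p\rangle_C$ lies in $\mathfrak{R}_\alpha$ with respect to $f[\mathrm{G}]$. Once this biconditional is established for every $\alpha$, both assertions of the lemma follow at once. The machinery will be Lemma \ref{lem:invpropacttp} (which identifies $(f[\mathrm{G}])\langle F\tr p\rangle_C$ with $\mathrm{G}\langle F\tr p\rangle\cap C$), Lemma \ref{lem:fix-copy} (to produce a function $h\in\overG\langle F\rangle$ with $h[U]=C$), Lemma \ref{lem:rankext} (to transport ranks by elements of $\mathrm{G}$), and Theorem \ref{thm:characterization} (to realise arbitrary continuations by elements inside $C$).

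For the base case $\alpha=0$, I first note that $(f[\mathrm{G}])\langle F\tr p\rangle_C=\mathrm{G}\langle F\tr p\rangle\cap C$. If $\mathrm{G}\langle F\tr p\rangle$ is finite, then by Corollary \ref{cor:maptypef} it is already contained in $C$, so both typesets coincide and both ranks are $0$. Conversely, if the intersection $\mathrm{G}\langle F\tr p\rangle\cap C=\{a_1,\dots,a_k\}$ is finite but $\mathrm{G}\langle F\tr p\rangle$ is infinite, pick $p'\in\mathrm{G}\langle F\tr p\rangle\setminus\{a_1,\dots,a_k\}$; apply Theorem \ref{thm:characterization} to the finite subset $F\cup\{a_1,\dots,a_k\}$ of $C$ and to $p'$ to obtain $g\in\mathrm{G}\langle F\cup\{a_1,\dots,a_k\}\rangle$ with $g(p')\in C$. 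Then $g(p')$ belongs to the orbit $\mathrm{G}\langle F\tr p\rangle$ and to $C$, hence equals some $a_j$; but $g$ fixes $a_j$, so $p'=g^{-1}(a_j)=a_j$, contradicting the choice of $p'$.

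For the inductive step, assume the statement for all ordinals $\beta<\alpha$. For the forward direction, suppose $\langle F\tr p\rangle\in\mathfrak{R}_\alpha$ with respect to $\mathrm{G}$, witnessed by a finite $F'\supseteq F$. Using Lemma \ref{lem:fix-copy} fix $h\in\overG\langle F\rangle$ with $h[U]=C$ and choose $g\in\mathrm{G}$ agreeing with $h$ on $F'$; then $g\in\mathrm{G}\langle F\rangle$ and $F'':=g[F']\subseteq C$ contains $F$. A continuation $\langle F''\tr s\rangle$ of $\langle F\tr p\rangle$ satisfies $s\in\mathrm{G}\langle F\tr p\rangle$, which is $g^{-1}$-invariant (being an orbit of $\mathrm{G}\langle F\rangle$), and by Lemma \ref{lem:rankext} has the same rank as the continuation $\langle F'\tr g^{-1}(s)\rangle$, hence rank $<\alpha$. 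Restricting to continuations with $s\in C$ (which are exactly those arising as continuations of $\langle F\tr p\rangle_C$ via $F''$), the induction hypothesis transports each to rank $<\alpha$ with respect to $f[\mathrm{G}]$, so $\langle F\tr p\rangle_C\in\mathfrak{R}_\alpha$. For the backward direction, suppose $\langle F\tr p\rangle_C\in\mathfrak{R}_\alpha$ with witness $F''\subseteq C$. Given any continuation $\langle F''\tr q\rangle$ of $\langle F\tr p\rangle$ with respect to $\mathrm{G}$, apply Theorem \ref{thm:characterization} to $F''\subseteq C$ and $q$ to obtain $g\in\mathrm{G}\langle F''\rangle$ with $g(q)\in C$; then $\langle F''\tr q\rangle=\langle F''\tr g(q)\rangle$ as types, and $g(q)\in\mathrm{G}\langle F\tr p\rangle\cap C$, so $\langle F''\tr g(q)\rangle_C$ is a continuation of $\langle F\tr p\rangle_C$ via $F''$ of rank $<\alpha$ with respect to $f[\mathrm{G}]$. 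The induction hypothesis then gives rank $<\alpha$ with respect to $\mathrm{G}$, completing the inductive step. The main subtlety, and the only non-routine point, is this need to align the witness set $F'$ (forward) and the representative $q$ (backward) with the copy $C$; both alignments are executed respectively by Lemma \ref{lem:fix-copy} together with Lemma \ref{lem:rankext}, and by Theorem \ref{thm:characterization}.
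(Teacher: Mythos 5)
Your proof is correct and follows essentially the same route as the paper's: an induction on the rank in which the witness set $F'$ is pushed into the copy $C$ by combining Lemma \ref{lem:fix-copy} with a $g\in\mathrm{G}$ agreeing with $h$ on $F'$, and ranks are transported along $g$ by Lemma \ref{lem:rankext}. If anything you are more careful than the paper in two spots it leaves implicit, namely the converse half of the rank-$0$ base case and the use of Theorem \ref{thm:characterization} to replace a continuation $\langle F''\tr q\rangle$ with $q\notin C$ by an equal type whose representative lies in $C$.
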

\begin{proof}
Let $\mathrm{rank}\langle F\tr p\rangle$ denote the rank of a type with respect to the group action $\mathrm{G}$ and $\mathrm{rank}_f\langle F\tr p\rangle_C$ denote the rank of a type in $C$ with respect to the group action $f(\mathrm{G})$. We proceed via induction on the ranks.  In the case that the typeset   $\mathrm{G}\langle F\tr p\rangle$ is finite the Lemma follows from Lemma \ref{lem:invpropacttp} Item (2)(b).   

Let $\alpha$ be the smallest ordinal for which there exists a type $\langle F\tr p\rangle_C$ with  $\nu:=\mathrm{rank}\langle F\tr p\rangle\not=\alpha:=\mathrm{rank}_f\langle F\tr p\rangle_C$. There exists  a finite subset $F'\subseteq C$ with  $F'\supseteq F$ and  so that  every continuation $\langle F'\tr q\rangle_C$ of $\langle F\tr p\rangle_C$  has a rank, say $\beta(q)=\mathrm{rank}_f\langle F'\tr q\rangle_C$, which is smaller than $\alpha$. The types $\langle F'\tr q\rangle$ are then continuations of the type $\langle F\tr p\rangle$ and via induction have the ranks $\beta_q$. Hence $\nu\leq \alpha$. There exists  a finite subset $F'\subseteq U$ with  $F'\supseteq F$ and  so that  every continuation $\langle F'\tr q\rangle$ of $\langle F\tr p\rangle$  has a rank, say $\beta(q)=\mathrm{rank}\langle F'\tr q\rangle$, which is smaller than $\nu$. Let $Q:=q\in \mathrm{G}\langle F\tr p\rangle\setminus F'$ be the set of those elements $q$. It follows from Fact \ref{fact:partitiont} that the sets $\mathrm{G}\langle F'\tr q\rangle$ for $q\in Q$ form a partition of the set $\mathrm{G}\langle F\tr p\rangle$. According to Lemma~\ref{lem:fix-copy} there is a function  $h\in \overG\langle F\rangle$ with $h[U]=C$.  Let $g\in \mathrm{G}$ be such that $\restrict{h}{F'}=\restrict{g}{F'}$.  Then $g[F']\subseteq C$ and   $g\in \mathrm{G}\langle F\rangle$. Consequently the sets $\mathrm{G}\langle g[F']\tr g(q)\rangle$  for $q\in Q$ form a partition of the set $\mathrm{G}\langle F\tr p\rangle$. Each of the types $\langle g[F']\tr g(q)\rangle$ is ranked and its rank is equal to $\beta(q)$ according to Lemma \ref{lem:rankext}. Implying according to  Lemma~\ref{lem:contall} that  $\mathrm{G}\langle h[F']\tr  h(q)\rangle\subseteq C$. Via induction then $\mathrm{rank}\langle g[F']\tr g(q)\rangle_C=\beta(q)<\nu$. Hence $\alpha\leq \nu$. 

\end{proof}

\begin{lem}\label{lem:unrunion2}
If  $\mathrm{G}\langle F\tr x\rangle$ is an unranked type,   then there exists an element $C\in  \overG[U]$ with $F\subseteq C$  and with $x\not\in C$. 
\end{lem}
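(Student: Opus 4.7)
The plan is to construct $C$ directly as an increasing union $F = F_0 \subseteq F_1 \subseteq \dots$ of finite sets and verify that the limit is a copy via Theorem \ref{thm:characterization}, while maintaining throughout the invariant that $\langle F_n \tr x\rangle$ is still unranked (and in particular $x\notin F_n$). This reduces the problem to a single extension lemma: given an unranked type $\langle F\tr x\rangle$ and any $y\in U\setminus F$, find $g\in \mathrm{G}\langle F\rangle$ with $g(y)\neq x$ such that $\langle F\cup\{g(y)\}\tr x\rangle$ remains unranked.

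To prove this extension lemma, recall that unrankedness is equivalent to property (p): every finite enlargement of the sockel admits an unranked continuation. Apply (p) to the sockel $F' := F\cup\{y\}$ to obtain an unranked continuation $\langle F'\tr q\rangle$ of $\langle F\tr x\rangle$, so $q\in \mathrm{G}\langle F\tr x\rangle$. Any unranked type has infinite typeset, so $q\notin F'$; in particular $q\neq y$. Pick $h\in \mathrm{G}\langle F\rangle$ with $h(x) = q$ and set $g := h^{-1}\in \mathrm{G}\langle F\rangle$. Then $g[F'] = F\cup\{g(y)\}$ and $g(q) = x$, so Lemma \ref{lem:rankext} transports the unrankedness of $\langle F'\tr q\rangle$ to $\langle F\cup\{g(y)\}\tr x\rangle$. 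The condition $g(y)\neq x$ follows because $g(y) = x$ would force $y = h(x) = q$, contradicting $q\neq y$. Note the case $y = x$ is not excluded and is handled uniformly by this argument.

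With the extension lemma in hand, I would enumerate all pairs $(E_n, y_n)$ with $E_n\in \finp{U}$ and $y_n\in U$ so that each pair appears infinitely often. Start with $F_0 = F$. At stage $n$, if $E_n\not\subseteq F_n$ or $y_n\in F_n$, set $F_{n+1} := F_n$; otherwise invoke the extension lemma to obtain $g\in \mathrm{G}\langle F_n\rangle\subseteq \mathrm{G}\langle E_n\rangle$ with $g(y_n)\neq x$ and $\langle F_n\cup\{g(y_n)\}\tr x\rangle$ unranked, and set $F_{n+1} := F_n\cup\{g(y_n)\}$. Let $C := \bigcup_n F_n$. Then $F\subseteq C$ and $x\notin C$. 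To verify the condition of Theorem \ref{thm:characterization}, fix $E\in \finp{C}$ and $y\in U\setminus E$: pick $n$ with $E\subseteq F_n$ and $(E_n, y_n) = (E, y)$; if $y\in F_n$, take $g = \id\in \mathrm{G}\langle E\rangle$, otherwise the $g$ constructed at stage $n$ lies in $\mathrm{G}\langle E\rangle$ and sends $y$ into $F_{n+1}\subseteq C$.

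The heart of the proof, and its only real obstacle, is the extension lemma, because we cannot a priori force the particular continuation $\langle F'\tr x\rangle$ (the one keeping $x$ as distinguished element) to be unranked; property (p) only guarantees \emph{some} unranked continuation $\langle F'\tr q\rangle$. The crucial trick is to use the freedom in choosing $q$ and then conjugate by $h^{-1}\in \mathrm{G}\langle F\rangle$ via Lemma \ref{lem:rankext} to transport the unrankedness back into the $x$-slot, while simultaneously displacing $y$ to a new point $g(y)\neq x$ that can safely be added to $F_n$.
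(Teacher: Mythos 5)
Your proof is correct. The extension lemma is sound: property \textbf{(p)} applied to $F\cup\{y\}$ yields an unranked continuation $\langle F\cup\{y\}\tr q\rangle$ with $q\in\mathrm{G}\langle F\tr x\rangle$ and $q\notin F\cup\{y\}$ (an unranked type cannot have its distinguished point in the sockel, else the typeset is a singleton and the rank is $0$), and conjugating by $g=h^{-1}\in\mathrm{G}\langle F\rangle$ with $h(x)=q$ correctly transports unrankedness into the $x$-slot via Lemma \ref{lem:rankext}; the bookkeeping $g(y)\neq x$ and $x\notin F_n$ all checks out, as does the closure argument via Theorem \ref{thm:characterization}. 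Your route is organized differently from the paper's, though the two are essentially dual. The paper works on the domain side: it fixes an enumeration $u_0,u_1,\dots$ of $U\setminus F$, keeps the sockels $F\cup U_n$ in the domain, and lets the distinguished point $x_n$ move (with $g_n(x_n)=x$ and $g_{n+1}=g_n\circ f_n$), assembling the partial maps $\restrict{g_n}{(F\cup U_n)}$ directly into an element $l\in\overline{\mathrm{G}}$ whose image omits $x$. You instead push everything forward by $g_n$: the sockel grows in the range, $x$ stays fixed, and instead of building the embedding you build the set $C$ and outsource the back-and-forth to Theorem \ref{thm:characterization}. What your version buys is modularity -- a clean, reusable one-step extension lemma and a transparent reason why $x$ never enters $C$ (each new point is explicitly forced to differ from $x$) -- at the cost of invoking Lemma \ref{lem:rankext} and the characterization theorem where the paper's argument is self-contained. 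Both hinge on exactly the same engine, namely property \textbf{(p)} plus invariance of rank under the group action.
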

\begin{proof}
Let  $(u_i\tr i\in \omega)$ be an enumeration of $U\setminus F$. Let $U_n=\{u_i\mid i\in n\}$.  Note that $U_0=\emptyset$. We will  construct a sequence $\{x_n\in U\mid n\in \omega\}$
so that for all $n\in \omega$: 
\begin{enumerate}
\item $x_0=x$. 
\item $x_n\in U\setminus(F\cup U_n)$ and the type $\langle F\cup U_n\tr x_n\rangle$ is unranked.  
\item The type $\langle F\cup U_{n+1}\tr x_{n+1}\rangle$ is a continuation of the type \\$\langle  F\cup U_n\tr x_n\rangle$.  
\end{enumerate}
The type $\langle F\cup U_n\tr x_n\rangle$ is unranked. Hence there exists, according to property \textbf{(p)},   an unranked type $\langle F\cup  U_{n+1}\tr x_{n+1}\rangle$ which is a continuation of the type $\langle F\cup  U_n\tr x_n\rangle$. Let $f_n\in \mathrm{G}\langle F\cup  U_n\rangle$ be the function with $f_n(x_{n+1})=x_{n}$.     

Let $g_0$ be the identity  function on $U$, that is $\{g_0\}=\mathrm{G}\langle U\rangle$. We will  construct recursively functions $g_n\in \mathrm{G}\langle F\rangle$ for $n\in \omega$   for which $g_n(x_n)=x$ and for which $l_n:=\restrict{g_n}{(F\cup U_n)}\subseteq \restrict{g_{n+1}}{(F\cup U_{n+1})}=l_{n+1}$.  Assume that $g_n$ has been constructed. Let $g_{n+1}=g_n\circ f_n$. Note that $x\not\in l_n[U_n]$ for all $n\in \omega$.

Then $l_0\subset l_1\subset l_2\subset\dots$ and $l:=\bigcup_{n\in \omega}l_n\in \overline{\mathrm{G}}$ with $x\not\in l[U]$. Let $C=l[U]$. 
\end{proof}

\begin{defin}\label{defin:ranktyp}
For $F\subseteq U$ let the {\em intersection closure}:
\[
\mathfrak{ic}_\mathrm{G}(F):=\bigcap\{C\in \overG[U]\mid F\subseteq C\}= \bigcap\overG[U]\llbracket F\rrbracket.  
\]
For $F$ a finite subset of $U$ let $\mathfrak{rc}_\mathrm{G}(F)$, the {\em ranked closure of $F$},  be the union of $F$ together with the union of the typesets $\mathrm{G}\langle F\tr p\rangle$ for which $\langle F\tr p\rangle$ is a ranked type. (In particular then $\mathfrak{rc}_\mathrm{G}(\emptyset)$ is the union of the ranked orbits of $\mathrm{G}$.)

In general, for a subset $S$ of $U$ let:
\begin{align}\tag{1}   
\mathfrak{rc}_\mathrm{G}(S):=\bigcup\{\mathfrak{rc}(F)\mid F\in \finp{S}\}.  
\end{align} 
If the group $\mathrm{G}$ is understood we write $\mathfrak{ic}$ for $\mathfrak{ic}_\mathrm{G}$ and $\mathfrak{rc}$ for $\mathfrak{rc}_\mathrm{G}$
\end{defin}
It follows from Corollary \ref{cor:contrankrank} that the ranked closure definition (1) is consistent with the ranked closure definition for finite subsets $F$ of $U$. 

\begin{note}\label{note:rcequivalgoli}
Let the group $\mathrm{G}$ be oligomorphic, then: Every ranked type has rank 0. A type is unranked if and only if its typeset is infinite. The ranked closure of every finite set is finite. For every set $S\subseteq U$ is $\mathfrak{ac}(S)=\mathfrak{rc}(S)$. 
\end{note}

\begin{thm}\label{thm:rcclinters}
 Let $U$ be a countable set and $\mathrm{G}$ be a subgroup of $\mathfrak S(U)$. Let $S\subseteq U$. Then $\mathfrak{rc}_\mathrm{G}(S)\subseteq \mathfrak{ic}_\mathrm{G}(S)$. If $F$ is a finite subset of $U$ then 
$\mathfrak{rc}_\mathrm{G}(F)= \mathfrak{ic}_\mathrm{G}(F)$. 
\end{thm}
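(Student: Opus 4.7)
The plan is to prove the two parts separately, noting that everything needed is essentially packaged in Lemma \ref{lem:contall} (for the inclusion $\mathfrak{rc}\subseteq\mathfrak{ic}$) and Lemma \ref{lem:unrunion2} (for the reverse inclusion in the finite case).

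\textbf{Part 1: $\mathfrak{rc}_\mathrm{G}(S)\subseteq \mathfrak{ic}_\mathrm{G}(S)$ for arbitrary $S\subseteq U$.} Let $x\in \mathfrak{rc}_\mathrm{G}(S)$. By definition there is a finite $F\in \finp{S}$ with $x\in \mathfrak{rc}_\mathrm{G}(F)$. If $x\in F$, then $x$ lies in every copy containing $S$ and we are done. Otherwise $x$ belongs to the typeset $\mathrm{G}\langle F\tr p\rangle$ of some ranked type $\langle F\tr p\rangle$. Since $\mathrm{G}\langle F\tr x\rangle=\mathrm{G}\langle F\tr p\rangle$, the type $\langle F\tr x\rangle$ is also ranked. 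Now pick any copy $C\in \overG[U]$ with $S\subseteq C$; then $F\subseteq C$, and Lemma \ref{lem:contall} yields $\mathrm{G}\langle F\tr x\rangle\subseteq C$, so $x\in C$. Since $C$ was arbitrary, $x\in \mathfrak{ic}_\mathrm{G}(S)$.

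\textbf{Part 2: $\mathfrak{ic}_\mathrm{G}(F)\subseteq \mathfrak{rc}_\mathrm{G}(F)$ when $F$ is finite.} I prove the contrapositive: if $x\in U\setminus \mathfrak{rc}_\mathrm{G}(F)$, then there is some copy $C\in \overG[U]$ with $F\subseteq C$ and $x\notin C$, so that $x\notin \mathfrak{ic}_\mathrm{G}(F)$. Since $x\notin \mathfrak{rc}_\mathrm{G}(F)$, we have $x\notin F$, so $\langle F\tr x\rangle$ is a genuine type. If this type were ranked, then $x\in \mathrm{G}\langle F\tr x\rangle\subseteq \mathfrak{rc}_\mathrm{G}(F)$, contradicting the choice of $x$. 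Hence $\langle F\tr x\rangle$ is unranked, and Lemma \ref{lem:unrunion2} produces the required copy $C\in \overG[U]$ with $F\subseteq C$ and $x\notin C$.

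There is no serious obstacle here; the two key lemmas do all the heavy lifting. The only subtlety is the observation used in Part 1 that $\mathrm{G}\langle F\tr p\rangle=\mathrm{G}\langle F\tr x\rangle$ whenever $x\in \mathrm{G}\langle F\tr p\rangle$ (already recorded after the definition of typesets in Section \ref{sect:Torb}), so that rankedness of the type does not depend on the particular representative chosen in the orbit. Combining both parts gives $\mathfrak{rc}_\mathrm{G}(F)=\mathfrak{ic}_\mathrm{G}(F)$ for finite $F$, as desired.
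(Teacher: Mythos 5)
Your proof is correct and follows exactly the paper's route: the inclusion $\mathfrak{rc}_\mathrm{G}(S)\subseteq \mathfrak{ic}_\mathrm{G}(S)$ from Lemma \ref{lem:contall}, and the reverse inclusion for finite $F$ from Lemma \ref{lem:unrunion2} applied to an unranked type $\langle F\tr x\rangle$. You have merely spelled out the details that the paper's two-sentence proof leaves implicit.
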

\begin{proof}
The inclusion $\mathfrak{rc}_\mathrm{G}(S)\subseteq \mathfrak{ic}_\mathrm{G}(S)$ follows from Lemma \ref{lem:contall}. The inclusion $\mathfrak{ic}_\mathrm{G}(F)\supseteq \mathfrak{rc}_\mathrm{G}(F)$ for finite $F$ follows from Lemma \ref{lem:unrunion2}. 
\end{proof}

According to the definition ot typeset, a subset $T$ of $U$ with $t\in T$ is the point orbit of $\mathrm{G}$ containing $t$ if and only if $T=\mathrm{G}\langle \emptyset\tr t\rangle$. Since by Theorem \ref {thm:rcclinters}, $\mathfrak{rc}_\mathrm{G}(\emptyset)= \mathfrak{ic}_\mathrm{G}(\emptyset)$, the intersection $\mathfrak{ic}(\emptyset)=\bigcap\overG(U)$  of all copies   for the group $\mathrm{G}$ is equal to the union of the point orbits of $\mathrm{G}$. In particular,  we have a characterization in terms of ranked closure of the nonexistence of a proper copy (compare with Lemma \ref{lem:justonecopy}). 

\begin{cor}\label{nopropercopy}
Let $U$ be a countable set and $\mathrm{G}$ be a subgroup of $\mathfrak S(U)$. Then there is no proper copy (i.e., $\overG[U]=\{U\}$, or equivalently, if every function $f\in \overline{\mathrm{G}}$ is bijective) if and only if every point orbit of the group $\mathrm{G}$ is  ranked. 
\end{cor}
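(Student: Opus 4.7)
The plan is to deduce the corollary essentially immediately from Theorem \ref{thm:rcclinters} applied to the empty set $F=\emptyset$. Since $\emptyset$ is finite, that theorem gives $\mathfrak{rc}(\emptyset)=\mathfrak{ic}(\emptyset)$, where the right-hand side is $\bigcap \overG[U]\llbracket \emptyset\rrbracket = \bigcap \overG[U]$, i.e., the intersection of every copy.

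Next I would unpack each side. By Definition \ref{defin:ranktyp}, $\mathfrak{rc}(\emptyset)$ is the union of the typesets $\mathrm{G}\langle\emptyset\tr p\rangle$ as $\langle\emptyset\tr p\rangle$ ranges over ranked types. Since $\mathrm{G}\langle\emptyset\rangle=\mathrm{G}$, each such typeset is precisely the $\mathrm{G}$-orbit of $p$. As the $\mathrm{G}$-orbits partition $U$, we obtain $\mathfrak{rc}(\emptyset)=U$ if and only if every point orbit of $\mathrm{G}$ is ranked. On the other side, since $U=\mathrm{id}_U[U]\in\overG[U]$, every copy is a subset of $U$, so $\bigcap\overG[U]=U$ if and only if $\overG[U]=\{U\}$.

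Chaining these equivalences yields
\[
\overG[U]=\{U\}\ \Longleftrightarrow\ \mathfrak{ic}(\emptyset)=U\ \Longleftrightarrow\ \mathfrak{rc}(\emptyset)=U\ \Longleftrightarrow\ \text{every point orbit of }\mathrm{G}\text{ is ranked},
\]
which is exactly the claim. The parenthetical alternative formulation ``every $f\in\overline{\mathrm{G}}$ is bijective'' is simply the equivalence $(i)\Leftrightarrow(ii)$ from Lemma \ref{lem:justonecopy}. Because each step above is just unpacking a definition, there is really no obstacle to speak of: the mathematical content is entirely absorbed into Theorem \ref{thm:rcclinters}, whose non-trivial halves rely on Lemma \ref{lem:contall} (for $\mathfrak{rc}\subseteq\mathfrak{ic}$) and Lemma \ref{lem:unrunion2} (for the reverse inclusion on finite sets).
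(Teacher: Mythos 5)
Your proof is correct and follows essentially the same route as the paper: the paper likewise derives the corollary from Theorem \ref{thm:rcclinters} applied to $F=\emptyset$, identifying $\mathfrak{rc}(\emptyset)$ with the union of the ranked point orbits and $\mathfrak{ic}(\emptyset)$ with $\bigcap\overG[U]$, and invokes Lemma \ref{lem:justonecopy} for the bijectivity reformulation. Nothing is missing.
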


\begin{cor}\label{cor:rcclinters}
 Let $U$ be a countable set and $\mathrm{G}$ be a subgroup of $\mathfrak S(U)$ and $F$ a finite subset of $U$ for which  the set  $\mathfrak{rc}(F)$ is finite. Then $\overG[U]\llg \mathfrak{rc}(F)\rrg=\overG[U]\llg F\rrg$.    
 \end{cor}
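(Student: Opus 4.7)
The plan is to observe that the corollary is essentially an immediate consequence of Theorem~\ref{thm:rcclinters} together with the definition of the notation $\overG[U]\llg \cdot \rrg$. The finiteness hypothesis on $\mathfrak{rc}(F)$ is not needed for the set equality itself (it matters only for interpreting $\overG[U]\llg \mathfrak{rc}(F)\rrg$ as a basic neighbourhood in the powerset topology), but the two inclusions proceed as follows.

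First I would prove the easy inclusion $\overG[U]\llg \mathfrak{rc}(F)\rrg\subseteq \overG[U]\llg F\rrg$: if a copy $V$ contains $\mathfrak{rc}(F)$, then since $F\subseteq \mathfrak{rc}(F)$ by definition of the ranked closure, we have $F\subseteq V$, and therefore $V\in \overG[U]\llg F\rrg$.

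For the reverse inclusion, let $C\in \overG[U]\llg F\rrg$, so $F\subseteq C$. By Theorem~\ref{thm:rcclinters}, since $F$ is finite we have $\mathfrak{rc}_\mathrm{G}(F)=\mathfrak{ic}_\mathrm{G}(F)$. By the definition of $\mathfrak{ic}_\mathrm{G}(F)$ as the intersection of all copies containing $F$, and since $C$ is such a copy, $\mathfrak{rc}_\mathrm{G}(F)=\mathfrak{ic}_\mathrm{G}(F)\subseteq C$. Hence $C\in \overG[U]\llg \mathfrak{rc}(F)\rrg$.

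There is no genuine obstacle here; the work has already been done in Theorem~\ref{thm:rcclinters}. The only thing to notice is that the argument uses the equality $\mathfrak{rc}=\mathfrak{ic}$ for finite sockels, which is precisely the content of that theorem, and consequently does not in fact rely on the finiteness of $\mathfrak{rc}(F)$.
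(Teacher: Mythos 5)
Your proof is correct and is essentially the paper's own argument: the easy inclusion follows from $F\subseteq\mathfrak{rc}(F)$, and the reverse inclusion follows from the equality $\mathfrak{rc}(F)=\mathfrak{ic}(F)$ for finite $F$ established in Theorem~\ref{thm:rcclinters}. Your side remark that the finiteness of $\mathfrak{rc}(F)$ plays no role in the set equality is also accurate.
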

 \begin{proof}
Clearly $\overG[U]\llg \mathfrak{rc}(F)\rrg\subseteq \overG[U]\llg F\rrg$. It follows from Theorem \ref{thm:rcclinters} that $\overG[U]\llg \mathfrak{rc}(F)\rrg\supseteq \overG[U]\llg F\rrg$.
 
 \end{proof}

\begin{lem}\label{lem:rcclosedop}
 Let $U$ be a countable set and $\mathrm{G}$ be a subgroup of $\mathfrak S(U)$. Then the map $\mathfrak{rc}: \powerset{(U)}\to \powerset{(U)}$, which maps every subset $S$ of $U$ to the subset $\mathfrak{rc}(S)$ of $U$,  is an algebraic closure operator.
\end{lem}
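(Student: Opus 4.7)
The plan is to verify the four defining properties of an algebraic closure operator: extensivity $S\subseteq\mathfrak{rc}(S)$, monotonicity, idempotency, and the algebraic condition $\mathfrak{rc}(S)=\bigcup\{\mathfrak{rc}(F)\mid F\in\finp{S}\}$. The algebraic condition is literally equation (1) of Definition \ref{defin:ranktyp}, and monotonicity follows immediately from it since $S\subseteq T$ yields $\finp{S}\subseteq\finp{T}$. Extensivity reduces, via the same equation, to $F\subseteq\mathfrak{rc}(F)$ for finite $F$, which is explicit in the first clause of Definition \ref{defin:ranktyp}.

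The content of the proof lies in idempotency, and more precisely in the nontrivial inclusion $\mathfrak{rc}(\mathfrak{rc}(S))\subseteq\mathfrak{rc}(S)$. My plan is to push the argument down to finite sets. Fix $x\in\mathfrak{rc}(\mathfrak{rc}(S))$; the algebraic definition yields a finite set $F\subseteq\mathfrak{rc}(S)$ with $x\in\mathfrak{rc}(F)$ and, for each $a\in F$, a finite set $F_a\subseteq S$ with $a\in\mathfrak{rc}(F_a)$. I would then consolidate these into $F':=\bigcup_{a\in F}F_a\in\finp{S}$ and aim to show $x\in\mathfrak{rc}(F')$. A preliminary monotonicity property on finite sets is needed here: if $F_0\subseteq F_1\in\finp{U}$ then $\mathfrak{rc}(F_0)\subseteq\mathfrak{rc}(F_1)$. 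This comes from Lemma \ref{lem:contrankrank}: any $y\in\mathfrak{rc}(F_0)\setminus F_1$ makes $\langle F_1\tr y\rangle$ a continuation of the ranked type $\langle F_0\tr y\rangle$, and continuations of ranked types are ranked. Applying this with $F_0=F_a$ and $F_1=F'$ for each $a\in F$ gives $F\subseteq\mathfrak{rc}(F')$.

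The final and key step is to pass from the pointwise inclusion $F\subseteq\mathfrak{rc}(F')$ to the set inclusion $\mathfrak{rc}(F)\subseteq\mathfrak{rc}(F')$, and this is the main obstacle. I would invoke Theorem \ref{thm:rcclinters} to replace $\mathfrak{rc}$ by $\mathfrak{ic}$ on both finite sets, and then observe that $F\subseteq\mathfrak{ic}(F')$ means every copy in $\overG[U]\llg F'\rrg$ also lies in $\overG[U]\llg F\rrg$; taking intersections reverses the inclusion to $\mathfrak{ic}(F)\subseteq\mathfrak{ic}(F')$. Composed with $x\in\mathfrak{rc}(F)=\mathfrak{ic}(F)$, this yields $x\in\mathfrak{ic}(F')=\mathfrak{rc}(F')\subseteq\mathfrak{rc}(S)$, as required. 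Thus the nontrivial step rests squarely on the finite-set identity $\mathfrak{rc}(F)=\mathfrak{ic}(F)$ of Theorem \ref{thm:rcclinters}, which is precisely what makes the definition via ranked types compatible with intersections of copies.
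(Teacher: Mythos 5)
Your proposal is correct and follows essentially the same route as the paper: reduce to finite sets via the algebraic clause of Definition \ref{defin:ranktyp}, consolidate the finite witness sets $F_a$ into one finite subset of $S$, and close the argument by passing through Theorem \ref{thm:rcclinters} to identify $\mathfrak{rc}$ with $\mathfrak{ic}$ on finite sets, where the needed transitivity is immediate from the intersection-of-copies description. The only cosmetic difference is that you derive finite-set monotonicity from Lemma \ref{lem:contrankrank} where the paper invokes Corollary \ref{cor:contrankrank} and monotonicity of $\mathfrak{ic}$ directly; the substance is identical.
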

\begin{proof}
If $F\subseteq F'$ are two finite subsets of $U$ then $F\subseteq \mathfrak{rc}(F)\subseteq \mathfrak{rc} (F')$  follows trivially from  Definition \ref{defin:ranktyp} and then from Corollary \ref{cor:contrankrank}. Hence  $S\subseteq \mathfrak{rc}(S)\subseteq \mathfrak{rc} (S')$ for every $S\subseteq S'$ using Definition \ref{defin:ranktyp} again.  In particular $\mathfrak{rc}(S)\subseteq \mathfrak{rc}(\mathfrak{rc}(S))$. We have to verify $\mathfrak{rc}(S) =\mathfrak{rc}(\mathfrak{rc}(S))$.  Let $x\in \mathfrak{rc}(\mathfrak{rc}(S))$.  Then by definition there there exists a   finite set  $F\subseteq \mathfrak{rc}(S)$ such that $x\in \mathfrak{rc}(F)$. Select for each $y\in F$ a finite set $F_y\subseteq S$ such that $y\in \mathfrak{rc}(F_y)$. Then $y\in\mathfrak{ic}(F_y)$ for every $y\in F$ according to Theorem \ref{thm:rcclinters}.   Set $E:= \bigcup_{y\in F}F_y$. Then $y\in\mathfrak{ic}(F_y)\subseteq  \mathfrak{ic}(E)$ for every $y\in F$, hence $F\subseteq \mathfrak{ic}(E)$.   Implying, using Theorem \ref{thm:rcclinters}, that $F\cup \{x\}\subseteq \mathfrak{ic}(E)$ and $x\in \mathfrak{rc}(E)\subseteq \mathfrak{rc}(S)$.  Thus, the map $\mathfrak{rc}$ defines a closure operator. According to stipulation (1) of  Definition \ref{defin:ranktyp}, the map $\mathfrak{rc}$ is algebraic.  

\end{proof}

However, there are examples for which the ranked closure of the empty set is empty and still there are no pairwise disjoint copies. 

\begin{example} 
Let $T_{\mathbb Z}$ be the ordered tree on a countable set $U$ in which each interval is finite, every element has countably many successors and a unique predecessor. Let $\mathrm{G}:= \aut(T_{\mathbb Z})$. Then: 

The powerset $\powerset(\omega)$ is embeddable in $\overline{\mathrm{G}}[U]$. Any  two copies pairwise intersect. For $x\in U$ is the set $\downarrow\!\!{x}:=\{y\in U\mid y\leq x\}$ the  intersection of all copies containing $x$.   
\end{example}

\subsection{The case of linear orders}\label{subs:linordcas}

Let $\mathrm{Q}=(Q;\leq)$ be the linear order of the rationals. For $S\subseteq \omega$ associate the copy $S_Q$ of $\mathrm{Q}$ given by:
\[
S_Q:=((-1,0)\cap Q)\cup \bigcup_{s\in S}((s,s+1)\cap Q). 
\]
Because the binary relational structure $\mathrm{Q}$ is homogeneous, see \cite{Fra}, it follows that $S_Q$ is a copy for the group $\mathrm{Aut}(\mathrm{Q})$ and hence that the partial order $(\overline{\mathrm{Aut}(\mathrm{Q})}[Q];\subset)$ of copies for the group $\overline{\mathrm{Aut}(\mathrm{Q})}$ embeds the partial order $(\powerset(\omega);\subset)$.

Let $\mathrm{U}=(U;\leq)$ be a linear order on a countable set $U$ and let $\mathrm{G}$ be the group of automorphisms of $\mathrm{U}$.  Clearly, every finite orbit of $\mathrm{G}$ is a singleton set, that is a linear order of order type \textbf{1}. Let $\zeta$ be the order type of the integers and note that $\zeta^0$ is the order type \textbf{1}.  If $\mathrm{U}$ is the natural linear order of the set of integers then all of the orbits of the group $\mathrm{G}\langle\{0\}\rangle$ are singleton sets. That is $\mathrm{Orb}(\mathrm{G}\langle\{0\}\rangle)=\{\{i\}\mid i\in U\}$. Implying that if an orbit of the group of automorphisms $\mathrm{G}$ of a linear order $\mathrm{U}$ has order type $\zeta$ then the rank of this orbit is 1. If  an orbit of the group of automorphisms $\mathrm{G}$ of a linear order $\mathrm{U}$ has order type $\zeta^2$ then the rank of this orbit is 2.   Consulting the discussion on powers of linear orders of order type $\zeta$ and in particular Corollary 8.6 of the Rosenstein book \cite{Rosenstein}, we obtain using Lemma \ref{lem:fix-copy}   and Theorem~\ref{thm:rcclinters}: 

\begin{thm}\label{thm:linordcas}
Let $\mathrm{U}=(U;\leq)$ be a linear order on a countable set $U$ and let $\mathrm{G}$ be the group of automorphisms of $\mathrm{U}$,then: 
\begin{enumerate}
\item Every orbit of $\mathrm{G}$  has order type $\zeta^\gamma$ or $\zeta^\gamma{\cdotp}\eta$ for some ordinal number $\gamma$. (The ordinal $\gamma$ is the Hausdorff rank of the scattered linear order $\zeta^\gamma$.)
\item There exists only one copy for $\mathrm{G}$ if and only if every orbit of $\mathrm{G}$ has order type $\zeta^\gamma$ for some ordinal number $\gamma$. The rank of such an orbit  is then the ordinal $\gamma$. 
\item If the group $\mathrm{G}$ has an orbit of order type $\zeta^\gamma{\cdotp}\eta$  then the partial order $(\overG[U];\subseteq)$ of copies for the group $\mathrm{G}$ embeds the partial order $(\powerset(\omega);\subseteq)$. 
\end{enumerate} 
\end{thm}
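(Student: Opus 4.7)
My plan is to prove the three items sequentially, using Rosenstein's classification of countable transitive linear orders together with a rank calculation identifying the Hausdorff rank with the type-rank of Section \ref{sect:rankedcl}.

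For (1), each orbit $O$ of $\mathrm{G}$, equipped with the induced order, is a countable linear order on which the image of $\mathrm{G}$ acts transitively by order-preserving bijections. Hence $\mathrm{Aut}(O,\leq)$ is transitive, and Rosenstein's Corollary 8.6 forces the order type of $O$ to be either $\zeta^\gamma$ or $\zeta^\gamma\cdot\eta$ for some countable ordinal $\gamma$; this $\gamma$ is the Hausdorff rank of the scattered factor.

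For (2), I would show by transfinite induction on $\gamma$ that an orbit of type $\zeta^\gamma$ is ranked of type-rank exactly $\gamma$. The base $\gamma=0$ is immediate since $\zeta^0$ is a singleton. For the inductive step, pick $x$ in an orbit $O$ of type $\zeta^\gamma$ and take $F'=\{x\}$ in the definition of rank: the orbits of $\mathrm{G}\langle\{x\}\rangle$ inside $O$ correspond to the standard recursive decomposition of $\zeta^\gamma$ into $\zeta^\beta$-blocks with $\beta<\gamma$, each of which is ranked of rank $<\gamma$ by induction, bounding the rank of $\langle\emptyset\tr x\rangle$ by $\gamma$; the matching lower bound follows from Lemma \ref{lem:contrankrank} and the observation that no finite $F'$ can destroy all $\zeta$-nesting at level $\gamma$. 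Conversely, for an orbit $O$ of type $\zeta^\gamma\cdot\eta$, density of $\eta$ gives property \textbf{(p)}: for every finite $F$ meeting finitely many $\zeta^\gamma$-blocks one finds a fresh block strictly between two of them inside which any element yields a continuation type whose typeset again looks like $\zeta^\gamma\cdot\eta$ on a sub-interval, hence is again unranked. Combining with Corollary \ref{nopropercopy} delivers (2).

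For (3), let $O$ be an orbit of type $\zeta^\gamma\cdot\eta$, index its $\zeta^\gamma$-blocks by $\mathbb{Q}$ as $(B_q)_{q\in\mathbb{Q}}$, and for $S\subseteq\omega$ put
\[
S_Q := ((-1,0)\cap\mathbb{Q}) \cup \bigcup_{s\in S}((s,s+1)\cap\mathbb{Q}), \qquad C_S := (U\setminus O)\cup \bigcup_{q\in S_Q} B_q,
\]
mirroring the construction at the start of the subsection. The assignment $S\mapsto C_S$ is inclusion-preserving and injective, so it suffices to check each $C_S$ is a copy. I verify this via Theorem \ref{thm:characterization}: given finite $F\subseteq C_S$ and $x\in U\setminus F$, if $x\notin O$ then the identity witnesses $g(x)=x\in(U\setminus O)\subseteq C_S$; if $x\in B_q$ with $q\notin S_Q$, then $F\cap O$ meets finitely many blocks $B_{q_1},\dots,B_{q_n}$ with $q_i\in S_Q$, and density of $S_Q$ in $\mathbb{Q}$ supplies some $q'\in S_Q$ lying in the same open interval of the cut determined by $\{q_1,\dots,q_n\}$ as $q$, after which a back-and-forth argument produces $g\in\mathrm{G}\langle F\rangle$ mapping $B_q$ into $B_{q'}$ and hence $x$ into $C_S$. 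Lemma \ref{lem:fix-copy} then upgrades $C_S$ to a bona fide copy.

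The main obstacle is the rank computation in part (2): pairing the Hausdorff rank of linear orders (an external structural invariant) with the type-rank of Section \ref{sect:rankedcl} (a recursive invariant defined via stabilizer subgroups and their continuations). Proving equality rather than a mere inequality requires a careful inductive choice of the witnessing finite $F'$ at each step, together with Lemma \ref{lem:contrankrank} to pin down how continuations distribute over the $\zeta^\beta$-block structure. A secondary difficulty in part (3) is to confirm that the desired block-moving automorphism of $O$ actually extends to an element of $\mathrm{G}\langle F\rangle$ acting on all of $\mathrm{U}$; this is handled by a back-and-forth construction respecting all other orbits of $\mathrm{G}$ simultaneously.
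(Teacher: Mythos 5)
Parts (1) and (2) follow the same route the paper sketches (Rosenstein's Corollary 8.6 plus the identification of Hausdorff rank with type--rank and Corollary \ref{nopropercopy}), and the inductive scheme you describe is the intended one. But in part (3) your construction is wrong: the sets $C_S:=(U\setminus O)\cup\bigcup_{q\in S_Q}B_q$ need not be copies. Take $\mathrm{U}=\mathbb{Q}\cdot 2$, i.e.\ each rational $q$ replaced by a two--element chain $q_0<q_1$. Then $\mathrm{G}\cong\aut(\mathbb{Q})$ acting diagonally, there are two orbits $O_0$ (the bottoms) and $O_1$ (the tops), and $O:=O_0$ has order type $\eta=\zeta^0\cdot\eta$, so item (3) applies with singleton blocks $B_q=\{q_0\}$. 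Your $C_S$ equals $O_1\cup\{q_0\mid q\in S_Q\}$, which fails the criterion of Theorem \ref{thm:characterization}: for $q\notin S_Q$ put $F=\{q_1\}\subseteq C_S$ and $x=q_0$; every $g\in\mathrm{G}\langle F\rangle$ fixes $q$ in the underlying copy of $\mathbb{Q}$ and hence fixes $q_0\notin C_S$. The point is that a copy cannot retain all of $U\setminus O$ while thinning $O$: points of other orbits can algebraically pin down points of $O$. (The theorem itself survives in this example --- the copies are exactly the sets $T\times\{0,1\}$ with $T$ a copy of $\mathbb{Q}$, and $S\mapsto S_Q\times\{0,1\}$ gives the embedding of $\powerset(\omega)$ --- but that is a different map.) Any correct proof of (3) must thin the remaining orbits coherently with the chosen blocks of $O$, e.g.\ by working with the convex decomposition of all of $U$ that the $\eta$-condensation of $O$ induces, or by iterating Lemma \ref{lem:fix-copy} and Lemma \ref{lem:unrunion2} to build an independent family of copies.

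The same oversight, in milder form, affects your rank computation in (2): at each stage you analyse only $F\cap O$ (``$F$ meeting finitely many $\zeta^\gamma$-blocks''), but the orbits of $\mathrm{G}\langle F\rangle$ on $O$ are also constrained by $F\setminus O$, since fixing points of other orbits restricts the induced action on $O$ (in the example above, fixing $q_1\in O_1$ collapses the $\mathrm{G}\langle F\rangle$-orbit of $q_0\in O_0$ to a singleton). Your argument that a continuation's typeset ``again looks like $\zeta^\gamma\cdot\eta$ on a sub-interval'' therefore needs a lemma controlling how stabilizers of arbitrary finite sets (not just finite subsets of $O$) decompose $O$; as written, the claimed unrankedness of $\zeta^\gamma\cdot\eta$-orbits and the exactness of the rank $\gamma$ for $\zeta^\gamma$-orbits are not established. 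Also note that exhibiting, for every $F'$, a continuation that ``is again unranked'' presupposes what is being proved; the clean formulation is to produce a family of types closed under property \textbf{(p)} and deduce that all its members are unranked by induction on rank.
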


\section{An application to algebraically finite groups}

 \begin{lem}\label{lem:oenonfinempt}
  Let $U$ be a countable set and $\mathrm{G}$ be a subgroup of $\mathfrak S(U)$. If $\mathfrak{rc}(F)=U$ for some  finite subset $F$ of $U$ then $\mathfrak{rc}(E)=U$ for all finite subsets $E$ of $U$. 
 \end{lem}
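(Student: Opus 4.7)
The plan is to reduce the statement to the characterization in Corollary \ref{nopropercopy}, which says that $\overG[U] = \{U\}$ is equivalent to every point orbit of $\mathrm{G}$ being ranked (and in turn to the ranked closure behaving maximally). Concretely, I will show that the hypothesis $\mathfrak{rc}(F) = U$ for some finite $F$ forces $\overG[U] = \{U\}$, after which the conclusion for arbitrary finite $E$ is immediate.

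First I would invoke Theorem \ref{thm:rcclinters} to rewrite the hypothesis as $\mathfrak{ic}(F) = U$, i.e., the only copy containing $F$ is $U$ itself. Hence $\overG[U]\llg F\rrg = \{U\}$. By Lemma \ref{lem:fix-copy} this gives $\overG\langle F\rangle[U] = \{U\}$, and then by Lemma \ref{lem:extov}(1) we have $\overline{\mathrm{G}\langle F\rangle}[U] = \{U\}$, so every element of $\overline{\mathrm{G}\langle F\rangle}$ is a bijection of $U$, i.e., $\overline{\mathrm{G}\langle F\rangle} \subseteq \mathfrak S(U)$.

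Now this is precisely condition (iv) of Lemma \ref{lem:justonecopy}, so the implication (iv)$\Rightarrow$(i) of that lemma yields $\overG[U] = \{U\}$. Finally, for any finite subset $E$ of $U$, we have $\mathfrak{ic}(E) = \bigcap \overG[U]\llg E\rrg = U$ since $U$ is the only copy at all, and applying Theorem \ref{thm:rcclinters} once more gives $\mathfrak{rc}(E) = \mathfrak{ic}(E) = U$, as required.

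There is no serious obstacle here: the proof is essentially an assembly of previously established equivalences, and the only thing to keep straight is the chain of identifications $\mathfrak{rc}(F) = \mathfrak{ic}(F) \leadsto \overG[U]\llg F\rrg = \{U\} \leadsto \overline{\mathrm{G}\langle F\rangle} \subseteq \mathfrak S(U) \leadsto \overG[U] = \{U\}$, each step justified by one of the lemmas already in Sections 3 and 6.
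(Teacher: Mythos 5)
Your proof is correct and follows essentially the same route as the paper's: translate $\mathfrak{rc}(F)=U$ into $\mathfrak{ic}(F)=U$ via Theorem \ref{thm:rcclinters}, pass to the stabilizer via Lemma \ref{lem:fix-copy}, and invoke the equivalences of Lemma \ref{lem:justonecopy} before translating back. The only cosmetic difference is that you use the implication (iv)$\Rightarrow$(i) of Lemma \ref{lem:justonecopy} where the paper uses (iv)$\Rightarrow$(iii); the two are interchangeable here.
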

 \begin{proof}
 Assume  that there exists a finite subset $F$ of $U$ with $\mathfrak{rc}(F)=U$. Then according to Theorem  \ref{thm:rcclinters} and Definition \ref{defin:ranktyp} and Lemma \ref{lem:fix-copy}:   $U=\mathfrak{ic}(F)=\bigcap\{C\in \overG[U]\mid F\subseteq C\}=\bigcap\overline{\mathrm{G}\langle F\rangle}[U]$. Hence $\overline{\mathrm{G}\langle F\rangle}=\{U\}$. Implying according to Lemma \ref{lem:justonecopy} that $\overline{\mathrm{G}\langle E\rangle}=\{U\}$ for all finite subsets $E$ of $U$. Hence $\mathfrak{rc}(E)=\mathfrak{ic}(E)=\bigcap\overline{\mathrm{G}\langle E\rangle}[U]=\bigcap\{U\}=U$ for all finite subsets $E$ of $U$. 
 \end{proof}
 
 \begin{cor}\label{cor:oenonfinempt} 
  Let $U$ be a countable set and $\mathrm{G}$ be a subgroup of $\mathfrak S(U)$ with $\mathfrak{rc}(\emptyset)\not=U$. Let $F$ be a finite subset of $U$.  Then $C:=\mathfrak{rc}(F)\not\in \overG[U]$. 
 \end{cor}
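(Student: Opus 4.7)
The plan is to proceed by contradiction. Suppose $C := \mathfrak{rc}(F) \in \overG[U]$, so $C$ is itself a copy. I will derive a contradiction by showing that $C$ would simultaneously be the smallest copy containing $F$ (so nothing smaller could lie between $F$ and $C$) and yet Lemma \ref{lem:stronglyisolated} would produce a strictly smaller copy containing $F$.

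First, I would rule out $C = U$. If $\mathfrak{rc}(F) = U$, then Lemma \ref{lem:oenonfinempt} forces $\mathfrak{rc}(\emptyset) = U$, contradicting the standing hypothesis. Hence $C \in \overG[U] \setminus \{U\}$.

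Second, I would invoke Theorem \ref{thm:rcclinters} to identify
\[
C \;=\; \mathfrak{rc}_\mathrm{G}(F) \;=\; \mathfrak{ic}_\mathrm{G}(F) \;=\; \bigcap \overG[U]\llg F \rrg,
\]
so $C$ is contained in every copy that contains $F$; in particular $C$ is the least copy containing $F$ under inclusion.

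Third, since $F$ is a finite subset of the copy $C$ and $C \neq U$, Lemma \ref{lem:stronglyisolated} produces a copy $W \in \overG[U]$ with $F \subseteq W \subset C$. But then $W$ is a copy containing $F$, hence $C = \bigcap \overG[U]\llg F \rrg \subseteq W$, which contradicts $W \subset C$. This completes the argument.

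The argument is essentially a two-line application once the right ingredients are lined up; the only subtlety is recognizing that the degenerate case $C = U$ must be dispatched first via Lemma \ref{lem:oenonfinempt}, because otherwise Lemma \ref{lem:stronglyisolated} is not available. Once $C \neq U$ is established, the contradiction is immediate from the minimality of $\mathfrak{ic}(F)$ among copies containing $F$ together with the strict non-minimality provided by Lemma \ref{lem:stronglyisolated}.
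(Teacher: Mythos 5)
Your proof is correct, and it takes a genuinely different route from the paper's. The paper argues via rank invariance: assuming $C:=\mathfrak{rc}(F)$ is a copy, say $C=f[U]$, it uses Lemma \ref{lem:contallinvariant} to transfer ranks to the induced action of $f[\mathrm{G}]$ on $C$, concludes via Lemma \ref{lem:oenonfinempt} (applied \emph{inside} $C$) that every orbit of $f[\mathrm{G}]$ is ranked, and then transfers back to deduce $\mathfrak{rc}(\emptyset)=U$, a contradiction. You instead use Lemma \ref{lem:oenonfinempt} only to dispatch the degenerate case $C=U$, and then combine Theorem \ref{thm:rcclinters} (which identifies $\mathfrak{rc}(F)$ with $\mathfrak{ic}(F)=\bigcap\overG[U]\llg F\rrg$ for finite $F$) with the strict-shrinking Lemma \ref{lem:stronglyisolated} to produce a copy $W$ with $F\subseteq W\subset C$, contradicting minimality. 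All ingredients you cite appear earlier in the paper, so there is no circularity, and the strictness of the inclusion in Lemma \ref{lem:stronglyisolated} (which you need) is indeed what its proof delivers, since $g[V]\subsetneq g[U]=V$ for an injective $g$ with $V\neq U$. Your argument is shorter and avoids the rank-transfer machinery entirely; the paper's version has the mild advantage of staying within the circle of ideas (invariance of rank under passage to a copy) that it needs anyway for the subsequent Theorem \ref{thm:interschcop}.
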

 \begin{proof}
If $C\in \overG[U]$ let $f\in \overG$ with $f[U]=C$.  According to  Lemma \ref{lem:contallinvariant} a type $\langle F\tr p\rangle$ is ranked if and only if the type $\langle F\tr p\rangle_C$ is ranked with respect to the group action $f[\mathrm{G}]$ on $C$. It follows then from Lemma~\ref{lem:oenonfinempt} that $\mathfrak{rc}(\emptyset)=C$ under the group action $f[\mathrm{G}]$ on $C$.  Hence, every orbit of the group $f[G]$ is ranked.  Which according to   Lemma~\ref{lem:contallinvariant} implies that every orbit of $\mathrm{G}$ is ranked and hence that $\mathfrak{rc}(\emptyset)=U$.

 \end{proof}

 \begin{thm}\label{thm:interschcop}
  Let $U$ be a countable set and $\mathrm{G}$ be a subgroup of $\mathfrak S(U)$ and $\mathfrak{rc}(\emptyset)\not=U$.  Then for every finite subset $F$ of $U$ and every copy $C\in \overG[U]$ with $F\subseteq C$ (hence $\mathfrak{rc}(F)\subseteq C$),  there exists: \\
  An $\omega$-sequence  
  $C=C_0\supset C_1\supset C_2\supset C_3\supset\dots$ of copies $C_i\in \overG[U]$ for which $\mathfrak{rc}(F)=\bigcap_{i\in \omega}C_i$.   
 \end{thm}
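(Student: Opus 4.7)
The plan is to build the sequence recursively by a standard exhaustion argument, using that $\mathfrak{rc}(F) = \mathfrak{ic}(F)$ for finite $F$ (Theorem \ref{thm:rcclinters}) and that $\mathfrak{rc}(F)$ is itself not a copy (Corollary \ref{cor:oenonfinempt}). Note that whenever $D \in \overG[U]$ contains $F$, we have $\mathfrak{rc}(F) = \mathfrak{ic}(F) \subseteq D$, and since $\mathfrak{rc}(F) \notin \overG[U]$ by Corollary \ref{cor:oenonfinempt} while $D \in \overG[U]$, the inclusion $\mathfrak{rc}(F) \subsetneq D$ is strict. This strict properness, together with Corollary \ref{cor:in/out}, is the tool that drives the recursion.

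Enumerate $U \setminus \mathfrak{rc}(F) = \{u_n \mid n \in \omega\}$ and set $C_0 := C$. Given $C_n \in \overG[U]$ with $F \subseteq C_n$, one observes that $\mathfrak{rc}(F) \subsetneq C_n$ by the remark above, so I can choose
\[
v_n := \begin{cases} u_n & \text{if } u_n \in C_n, \\ \text{any element of } C_n \setminus \mathfrak{rc}(F) & \text{otherwise.}\end{cases}
\]
In either case $v_n \in C_n \setminus \mathfrak{rc}(F) = C_n \setminus \mathfrak{ic}_{C_n}(F)$, where the equality uses the first part of Corollary \ref{cor:in/out} applied inside $C_n$. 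By the second part of Corollary \ref{cor:in/out} (applied with $E := \{v_n\}$ inside the copy $C_n$), there exists a copy $C_{n+1} \in \overG[U]$ with $F \subseteq C_{n+1} \subseteq C_n$ and $v_n \notin C_{n+1}$.

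By construction $C_{n+1} \subsetneq C_n$ (strict, since $v_n$ witnesses the difference), so the chain is strictly decreasing. Moreover $u_n \notin C_{n+1}$ for every $n$: in the first case because $u_n = v_n$ was explicitly excluded, in the second because $u_n \notin C_n \supseteq C_{n+1}$. Setting $D := \bigcap_{n \in \omega} C_n$, one has $\mathfrak{rc}(F) \subseteq D$ because each $C_n$ contains $F$ and hence $\mathfrak{rc}(F) = \mathfrak{ic}(F)$; conversely, any $u \in U \setminus \mathfrak{rc}(F)$ equals some $u_n$ and so is excluded from $C_{n+1}$, hence from $D$. Thus $D = \mathfrak{rc}(F)$, as required.

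The main (minor) obstacle is guaranteeing strictness of the descent at every stage even when $u_n$ has already been excluded by a previous $C_k$; this is precisely why Corollary \ref{cor:oenonfinempt} is invoked, since it provides a fresh element $v_n \in C_n \setminus \mathfrak{rc}(F)$ to remove. No back-and-forth or transfinite argument is needed, and the construction uses only $\omega$ many steps because $U$ is countable.
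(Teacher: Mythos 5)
Your proof is correct. The overall architecture coincides with the paper's: a strictly decreasing $\omega$-chain built recursively, at each stage removing one designated point of an enumeration of the complement of $\mathfrak{rc}(F)$ while keeping $F$, with Corollary \ref{cor:oenonfinempt} guaranteeing that a fresh point to remove always exists (so the descent is strict). Where you genuinely diverge is in the justification of the single inductive step. The paper works with the enumeration of $P=C\setminus\mathfrak{rc}(F)$ as a union of typesets of unranked types with sockel $F$, passes to the relativized group action $f[\mathrm{G}]$ on $C_{n-1}$, invokes Lemma \ref{lem:contallinvariant} to transfer unrankedness, and then (implicitly) a relativized Lemma \ref{lem:unrunion2} to produce $C_n$ avoiding $p_m$. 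You bypass all of that by citing Corollary \ref{cor:in/out}: since $v_n\notin\mathfrak{rc}(F)=\mathfrak{ic}(F)=\mathfrak{ic}_{C_n}(F)$, some copy containing $F$ and \emph{included in} $C_n$ omits $v_n$. This is cleaner and more elementary at the point of use, at the cost of leaning on Corollary \ref{cor:in/out} (which itself rests on Lemma \ref{lem:fix-copy}) rather than the ranked-type machinery. One small imprecision: the "second part" of Corollary \ref{cor:in/out} as stated only yields a copy containing $F$ and disjoint from $E$, with no guarantee that it lies inside $C_n$; the containment in $C_n$ comes from the \emph{first} part (the equality $\mathfrak{ic}(F)=\mathfrak{ic}_{C_n}(F)$ is an intersection over copies included in $C_n$, so one member of that family must omit $v_n$). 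Since you also cite the first part for exactly that equality, the argument stands as written.
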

 \begin{proof}
Let $F$ be a finite subset of $U$ and let $C_0:=C$. It follows from Lemma~\ref{lem:oenonfinempt} that $\mathfrak{rc}(F)\not=C$. Implying that there exists an unranked type with sockel $F$. Let $P:=\{p_j\mid j\in \omega\}$ be an $\omega$-enumeration of the union $P$ of all the typesets $\mathrm{G}\langle F\tr p\rangle\cap C$ with sockel $F$  for unranked types $\langle F\tr p\rangle$. If  for a type $\langle F\tr p\rangle$ the typeset $\mathrm{G}\langle F\tr p\rangle \cap C$ is finite then the rank of the type $\langle F\tr p\rangle_C$ with respect to the copy $C$ is 0. It follows from Lemma \ref{lem:contallinvariant} that then the  type $\langle F\tr p\rangle$ is ranked. Hence the set $P$ is infinite. 

Assume now that we have already  constructed a sequence $C_0\supset C_1\supset C_2\supset\dots\supset C_{n-1}\supseteq \mathfrak{rc}(F)$ of copies. Then $C_{n-1}\not= \mathfrak{rc}(F)$ according to Corollary \ref{cor:oenonfinempt}.  Let $f\in \overG$ with $f[U]=C_{n-1}$.   Let $m$ be the smallest index for which $p_m\in C_{n-1}$. The type $\langle F\tr p_m\rangle$ is unranked for the group action $\mathrm{G}$. Implying,  according to Lemma \ref{lem:contallinvariant} that the type $\langle F\tr p_m\rangle_{C_{n-1}}$ is unranked with respect to the group action $f[\mathrm{G}]$. Implying that there exists a copy $C_n\subset C_{n-1}$   with $F\subseteq C_n\in \overline{(f[\mathrm{G}])}[C_{n-1}]$. It follows from Lemma~\ref{lem:invarcop} Item (5) that $C_n\in \overG[U]$.  Because $F\subseteq C_n$ it follows from Lemma \ref{lem:contall} that $\mathfrak{rc}(F)\subseteq C_n$. 
 
 \end{proof} 
 
 \begin{cor}\label{cor:interschcop}
 Let $U$ be a countable set and $\mathrm{G}$ be a subgroup of $\mathfrak S(U)$.    Then for all  finite subsets $F$ and $E$ of $U$ with $E\cap\mathfrak{rc}(F)=\emptyset$ and for every copy $C$ with $F\subseteq C$:
 \begin{enumerate}
 \item There exists a copy $C'$  with  $C'\cap E = \emptyset$ and with $F\subseteq C'\subseteq C$.
 \item  For every function $f\in \overG\langle F\rangle$:
 \begin{enumerate}
 \item $\mathfrak{rc}(F)= \mathfrak{ic}(F)= \bigcap  (\overG\langle F\rangle)[U]$.
 \item $f[\mathfrak{rc}(F)]=\mathfrak{rc}(F)$. 
 \end{enumerate}
 \end{enumerate}
 \end{cor}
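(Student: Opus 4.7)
The plan is to address the three assertions in order, with essentially all content residing in~(2)(b).

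For~(1), I would first dispose of the degenerate case $\mathfrak{rc}(\emptyset)=U$: by Corollary~\ref{nopropercopy} this forces $\overG[U]=\{U\}$, so $C=U$ and $\mathfrak{rc}(F)\supseteq\mathfrak{rc}(\emptyset)=U$; the hypothesis $E\cap\mathfrak{rc}(F)=\emptyset$ then forces $E=\emptyset$, and $C':=U$ works trivially. Otherwise $\mathfrak{rc}(\emptyset)\neq U$, and Theorem~\ref{thm:interschcop} directly supplies a chain $C=C_0\supset C_1\supset C_2\supset\cdots$ of copies with $\bigcap_{i\in\omega} C_i=\mathfrak{rc}(F)$. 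Since $E$ is finite and disjoint from this intersection, each $e\in E$ lies outside some $C_{i_e}$; taking $i:=\max_{e\in E}i_e$, the copy $C':=C_i$ satisfies $F\subseteq C'\subseteq C$ and $C'\cap E=\emptyset$.

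Part~(2)(a) is a chain of rewritings: Theorem~\ref{thm:rcclinters} gives $\mathfrak{rc}(F)=\mathfrak{ic}(F)$, Definition~\ref{defin:ranktyp} unfolds $\mathfrak{ic}(F)$ as $\bigcap\overG[U]\llg F\rrg$, and Lemma~\ref{lem:fix-copy} replaces $\overG[U]\llg F\rrg$ by $(\overG\langle F\rangle)[U]$.

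The substance is in~(2)(b), which I expect to be the main obstacle. Rather than proving the inclusions $f[\mathfrak{rc}(F)]\subseteq\mathfrak{rc}(F)$ and $\mathfrak{rc}(F)\subseteq f[\mathfrak{rc}(F)]$ separately (the second being non-obvious since $f$ is injective but possibly not surjective on $U$), I would establish the sharper set-theoretic equality $f^{-1}[\mathfrak{rc}(F)]=\mathfrak{rc}(F)$, from which both fall out. The case $u\in F$ is trivial because $f$ fixes $F$ pointwise, so both $u$ and $f(u)$ lie in $F\subseteq\mathfrak{rc}(F)$. For $u\in U\setminus F$, approximate $f$ by $g\in\mathrm{G}$ with $\restrict{g}{(F\cup\{u\})}=\restrict{f}{(F\cup\{u\})}$; then $g\in\mathrm{G}\langle F\rangle$ and $g(u)=f(u)$, so Lemma~\ref{lem:rankext} yields that $\langle F\tr u\rangle$ and $\langle F\tr f(u)\rangle$ have the same rank, whence $u\in\mathfrak{rc}(F)\iff f(u)\in\mathfrak{rc}(F)$. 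Combining these cases gives $f^{-1}[\mathfrak{rc}(F)]=\mathfrak{rc}(F)$. Finally,~(2)(a) yields $\mathfrak{rc}(F)\subseteq f[U]$ since $f[U]\in(\overG\langle F\rangle)[U]$, and consequently
\[
f[\mathfrak{rc}(F)]=f\bigl[f^{-1}[\mathfrak{rc}(F)]\bigr]=\mathfrak{rc}(F)\cap f[U]=\mathfrak{rc}(F),
\]
which closes the argument.
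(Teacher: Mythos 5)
Your proof is correct, but it diverges from the paper's at two points. For Item (1) the paper simply invokes Corollary \ref{cor:in/out} (the elementary iterative argument that shrinks a copy to avoid one forbidden point at a time), after noting via Theorem \ref{thm:rcclinters} that $E$ is disjoint from $\mathfrak{ic}(F)$; you instead invoke the full strength of Theorem \ref{thm:interschcop} to get a descending $\omega$-chain of copies with intersection $\mathfrak{rc}(F)$ and extract a single $C_i$ avoiding the finite set $E$. Both work and neither is circular, but your route uses noticeably heavier machinery (and correctly forces you to treat the degenerate case $\mathfrak{rc}(\emptyset)=U$ separately, since Theorem \ref{thm:interschcop} excludes it), whereas the paper's route is self-contained at the level of Section 3. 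Item (2)(a) is the same chain of rewritings in both. For Item (2)(b) the paper argues at the level of typesets: by Lemma \ref{lem:maptypef}, $f$ maps each typeset with sockel $F$ into itself, and since the ranked typesets are contained in $\mathfrak{rc}(F)\subseteq f[U]$ while distinct typesets are disjoint, $f$ must map each ranked typeset \emph{onto} itself. Your argument is pointwise: approximating $f$ on $F\cup\{u\}$ by some $g\in\mathrm{G}\langle F\rangle$ and applying Lemma \ref{lem:rankext} gives $u\in\mathfrak{rc}(F)\iff f(u)\in\mathfrak{rc}(F)$, i.e. $f^{-1}[\mathfrak{rc}(F)]=\mathfrak{rc}(F)$, and then the same key containment $\mathfrak{rc}(F)\subseteq f[U]$ from (2)(a) converts this into $f[\mathfrak{rc}(F)]=\mathfrak{rc}(F)$. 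Your version is arguably cleaner and makes the role of the containment $\mathfrak{rc}(F)\subseteq f[U]$ more transparent; the paper's version has the mild advantage of exhibiting directly that $f$ restricts to a permutation of each ranked typeset.
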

 \begin{proof}
According to  Theorem \ref{thm:rcclinters},  we have  $\mathfrak{ac}(F)=\mathfrak{rc}(F)$ for every finite subset $F\subseteq U$. Hence, in our case, we have $\mathfrak{ac}(F)\cap E= \emptyset$. Applying Corollary \ref{cor:in/out}, we get Item (1). 
 
Item (2a):  $\mathfrak{rc}(F)= \mathfrak{ic}(F)$ according to Theorem \ref{thm:rcclinters}. Since $\mathfrak{ic}(F)= \bigcap  \overline{\mathrm{G}}[U]\llg F\rrg$ according to  Definition \ref{defin:ranktyp} and $\overline{\mathrm{G}}[U]\llg F\rrg=\overline{\mathrm{G}}[U]\langle F\rangle$ according to Lemma \ref{lem:fix-copy} we obtain $\mathfrak{ic}(F)= \bigcap  \overline{\mathrm{G}}[U]\langle F\rangle$. \\
(2b): It follows from Definition \ref{defin:ranktyp} and from  Lemma \ref{lem:maptypef} that  $f[\mathfrak{rc}(F)]\subseteq \mathfrak{rc}(f[F])=\mathfrak{rc}(F)$. The set  $f[U]$ is a copy containing $F=f[F]\subseteq f[\mathfrak{rc}(F)]$. In particular it follows from Lemma \ref{lem:maptypef} that $f$ maps every typeset of the types  with sockel $F$ to a subset of this typeset. Implying, because $U$ is the union of $F$ together with the union of the typesets of the types with sockel $F$,  that if $f$ would map the typeset of a ranked type $\langle F\tr p\rangle$ to a proper subset then $\mathfrak{rc}(F)$ would not be a subset of $f[U]$. But, it follows from Theorem \ref{thm:rcclinters} that $\mathfrak{rc}(F)\subseteq f[U]$.
 \end{proof}

  \begin{lem}\label{lem:locfinacrc}
  If $\mathrm{G}$ is a algebraically finite subgroup of $\mathfrak S(U)$ then $\mathfrak{rc}(S)=\mathfrak{ac}(S)$ for every subset $S$ of $U$. 
  \end{lem}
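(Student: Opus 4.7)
My plan is to reduce the statement to the finite case and then argue by induction on rank. The definitions of $\mathfrak{rc}$ and $\mathfrak{ac}$ for infinite $S$ are both obtained by taking the union over finite subsets $F \in \finp{S}$, so it suffices to prove $\mathfrak{rc}(F) = \mathfrak{ac}(F)$ for every finite $F \subseteq U$.

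For finite $F$, the definition of $\mathfrak{ac}(F)$ captures exactly the union of $F$ together with the rank~$0$ typesets with sockel $F$ (since a type has rank~$0$ iff its typeset is finite). Thus the real content to prove is: \emph{when $\mathrm{G}$ is algebraically finite, every ranked type has rank $0$}. Granting this, $\mathfrak{rc}(F)$ and $\mathfrak{ac}(F)$ are the same union.

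The proof of the boxed claim proceeds by transfinite induction on the rank $\alpha$ of a type $\langle F \tr p \rangle$. The case $\alpha = 0$ is immediate. For the inductive step, suppose $\langle F \tr p \rangle$ has rank $\alpha > 0$. Pick a finite $F' \supseteq F$ witnessing the rank, so that every continuation $\langle F' \tr q \rangle$ with $q \in \mathrm{G}\langle F \tr p\rangle$ has rank strictly less than $\alpha$. By the induction hypothesis, each such continuation has rank~$0$, hence $\mathrm{G}\langle F' \tr q \rangle$ is finite and therefore contained in $\mathfrak{ac}(F')$. By Fact~\ref{fact:partitiont}, these continuations partition $\mathrm{G}\langle F \tr p \rangle$, so
\[
\mathrm{G}\langle F \tr p \rangle \;=\; \bigcup_{q \in \mathrm{G}\langle F \tr p\rangle} \mathrm{G}\langle F' \tr q \rangle \;\subseteq\; \mathfrak{ac}(F').
\]
Since $\mathrm{G}$ is algebraically finite, $\mathfrak{ac}(F')$ is finite, forcing $\mathrm{G}\langle F \tr p \rangle$ to be finite and thus giving rank~$0$, which completes the induction.

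With every ranked type having rank $0$, the equality $\mathfrak{rc}(F) = \mathfrak{ac}(F)$ for finite $F$ follows directly from Definition~\ref{defin:ranktyp} and the definition of the algebraic closure. Extending to an arbitrary subset $S \subseteq U$ is then a matter of taking unions:
\[
\mathfrak{rc}(S) \;=\; \bigcup_{F \in \finp{S}} \mathfrak{rc}(F) \;=\; \bigcup_{F \in \finp{S}} \mathfrak{ac}(F) \;=\; \mathfrak{ac}(S).
\]
The only nontrivial obstacle is the induction step, and the crux there is that algebraic finiteness supplies a uniform finite bound on the union of finite typesets with sockel $F'$, which collapses a potentially wild partition into a finite one.
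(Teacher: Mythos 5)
Your proof is correct and follows essentially the same route as the paper: both arguments rest on showing that under algebraic finiteness every ranked type has rank $0$, using the partition of a typeset into its continuations over a witness $F'$ together with the finiteness of $\mathfrak{ac}(F')$. Your version is in fact slightly more complete, since the paper only treats the rank-$1$ case explicitly and leaves the transfinite induction implicit, whereas you carry it out in full.
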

  \begin{proof}
  It suffices to prove the lemma for finite subsets $F$ of $U$. If $\langle F\tr p\rangle$ is a type of rank 1 then there exists a finite $F'\supseteq F$ so that every continuation $\langle F'\tr q\rangle$ of $\langle F\tr p\rangle$ has rank 0. Leading to the contradiction that $\mathrm{G}\langle F\tr p\rangle \subseteq \bigcup_{q}\mathrm{G}\langle F'\tr q\rangle$ is finite and hence has rank 0. Hence,  the ranked closure of a finite set is the union of typesets of types which have rank 0, that is the union of typesets which are finite and therefore equal to the algebraic closure.  
\end{proof}

\begin{lem}\label{lem:constrpairextbas}
 Let $U$ be a countably infinite set and $\mathrm{G}$ be a algebraically finite subgroup of $\mathfrak S(U)$ with $\mathfrak{ac}(\emptyset)=\emptyset$.  Then there exist two copies $\{C,D\}\subseteq \overG[U]$ with $C\cap D=\emptyset$. 
\end{lem}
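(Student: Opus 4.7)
The plan is a back-and-forth construction that produces two functions $f, g \in \overG$ with $f[U] \cap g[U] = \emptyset$; then $C := f[U]$ and $D := g[U]$ are the required disjoint copies. Throughout I use that, since $\mathrm{G}$ is algebraically finite, Lemma \ref{lem:locfinacrc} gives $\mathfrak{ac} = \mathfrak{rc}$, that $\mathfrak{ac}$ sends finite sets to finite sets, and that elements of $\mathrm{G}$ commute with $\mathfrak{ac}$ (immediate from the orbit definition, so the $\mathrm{G}$-image of an algebraically closed finite set is algebraically closed).

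Enumerate $U = \{u_n : n < \omega\}$ and inductively produce finite algebraically closed sets $X_n, A_n, Y_n, B_n \subseteq U$ together with partial injections $f_n \colon X_n \to A_n$ and $g_n \colon Y_n \to B_n$, each the restriction of some element of $\mathrm{G}$, with $X_n \subseteq X_{n+1}$, $f_n \subseteq f_{n+1}$ and likewise for $g$, subject to the crucial disjointness invariant $A_n \cap B_n = \emptyset$. Start with $X_0 = A_0 = Y_0 = B_0 = \emptyset$, which is algebraically closed because $\mathfrak{ac}(\emptyset) = \emptyset$. At odd stage $2n + 1$, to extend $f$ so as to force $u_n \in \dom(f)$ (skipping if $u_n$ is already there), apply Corollary \ref{cor:interschcop}(1) with $F := A_{2n}$, $E := B_{2n}$, $C := U$: since $A_{2n}$ is algebraically closed, $\mathfrak{rc}(A_{2n}) = A_{2n}$, so the hypothesis $E \cap \mathfrak{rc}(F) = \emptyset$ is just the invariant. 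This yields a copy $C^* \subseteq U$ with $A_{2n} \subseteq C^*$ and $C^* \cap B_{2n} = \emptyset$. By Lemma \ref{lem:fix-copy} pick $h \in \overG\langle A_{2n}\rangle$ with $h[U] = C^*$, let $h' \in \mathrm{G}$ realize $f_{2n}$ on $X_{2n}$, and set $k := h \circ h' \in \overG$; then $k|_{X_{2n}} = f_{2n}$ (since $h$ fixes $A_{2n}$) and $k[U] = C^*$. Put $y := k(u_n)$, $X_{2n+1} := \mathfrak{ac}(X_{2n} \cup \{u_n\})$, and choose $k^* \in \mathrm{G}$ agreeing with $k$ on the finite set $X_{2n+1}$. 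Define $f_{2n+1} := k^*|_{X_{2n+1}}$ and $A_{2n+1} := k^*[X_{2n+1}] = \mathfrak{ac}(A_{2n} \cup \{y\})$. Because $A_{2n} \cup \{y\} \subseteq C^*$ and $C^*$ is a copy, Theorem \ref{thm:rcclinters} gives $A_{2n+1} = \mathfrak{rc}(A_{2n} \cup \{y\}) \subseteq C^*$, hence $A_{2n+1} \cap B_{2n} \subseteq C^* \cap B_{2n} = \emptyset$, and the invariants persist. Even stages extend $g$ by the symmetric procedure, forcing $u_n \in \dom(g)$.

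The limits $f := \bigcup_n f_n$ and $g := \bigcup_n g_n$ then have domain $U$, and each lies in $\overG$ because every finite restriction is realized by some $k^* \in \mathrm{G}$ produced in the construction. Setting $C := f[U] = \bigcup_n A_n$ and $D := g[U] = \bigcup_n B_n$, the invariant $A_k \cap B_k = \emptyset$ at every stage $k$ yields $A_n \cap B_m = \emptyset$ (take $k = \max(n,m)$), so $C \cap D = \emptyset$. The main obstacle is that a naive back-and-forth that simply picks $y$ in the typeset $\mathrm{G}\langle A_n \tr z\rangle$ need not preserve $\mathfrak{ac}(A_n \cup \{y\}) \cap B_n = \emptyset$, since $\mathfrak{ac}$ could spill into $B_n$; Corollary \ref{cor:interschcop}(1) is precisely what cages the $\mathfrak{ac}$-closure inside a copy already disjoint from $B_n$, allowing the induction to go through.
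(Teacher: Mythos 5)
Your proof is correct and follows essentially the same route as the paper's: an alternating construction of the two copies by finite algebraically closed approximations, using Corollary \ref{cor:interschcop}(1) to find at each stage a copy containing the current $A$-side and disjoint from the current $B$-side, and Theorem \ref{thm:rcclinters} together with Lemma \ref{lem:locfinacrc} to confine the algebraic closure of the newly enlarged image inside that copy. The only (harmless) difference is that you first pass to the copy $C^*$ and then take $y=k(u_n)\in C^*$, which lets you avoid the paper's case split on whether the relevant typeset is finite or infinite.
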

\begin{proof}
Let $\{u_i\mid i\in \omega\}$ be an $\omega$-enumeration of the set $U$. For $n\in \omega$ let $U_n=\{u_i\mid i\in n\}$.    Let $C_0=D_0=U$ and $V_0=W_0=\emptyset$. Let $g_1$ be the identity map on $U$.  For  $V_1:=\{u_0\}$ the algebraic closure of $V_1$ is finite. Hence there exists, according to Corollary \ref{cor:interschcop} Item~(1) with $E$ for $\mathfrak{ac}(V_1)$ and $F$ for $\emptyset$ and $C$ for $U$, a copy $D_1$ with  $\mathfrak{ac}(V_1) \cap D_1=\emptyset$.  The typeset $\mathrm{G}\langle \emptyset\tr u_0\rangle$ of the type $\langle \emptyset\tr u_0\rangle$ is infinite because $\mathfrak{ac}(\emptyset)=\emptyset$. Implying according to Lemma  \ref{lem:contallinvariant} that the typeset $\mathrm{G}\langle \emptyset\tr u_0\rangle\cap D_1$ of the type $\langle \emptyset\tr u_0\rangle_{D_1}$ is infinite.  Let $w_0\in(\mathrm{G}\langle \emptyset\tr u_0\rangle\cap D_1)\setminus \mathfrak{ac}(V_1)$ and let $W_1=\{w_0\}$. Note that there exists a function $h_1\in \mathrm{G}$ with $h_1(u_0)=w_0$.    It follows from Theorem   \ref{thm:rcclinters} that $\mathfrak{ac}(W_1)\subseteq D_1$. Hence $\mathfrak{ac}(V_1)\cap (\mathfrak{ac}(W_1)=\emptyset$. It follows using Corollary \ref{cor:interschcop} Item (1) that there exists a copy $C_1$ with $\mathfrak{ac}(V_1)\subseteq C_1\subseteq U$ and with $\mathfrak{ac}(W_1) \cap C_1=\emptyset$. Then $\mathfrak{ac}(V_1)\subseteq C_1\setminus D_1$ and $\mathfrak{ac}(W_1)\subseteq D_1\setminus C_1$. 

 For $n\in \omega$ assume that there are  copies $C_n$ and $D_n$  and sets $V_n=\{v_i\mid i\in n\}$ with $\mathfrak{ac}(V_n)\subseteq C_n\setminus D_n$ and $W_n=\{w_i\mid i\in n\}$ with $\mathfrak{ac}(W_n)\subseteq D_n\setminus C_n$.  Assume further  that there are functions $g_n$ and $h_n$ in $\mathrm{G}$ with $g_n(u_i)=v_i$ for all $i\in n$ and $h_n(u_i)=w_i$ for all $i\in n$. Note that $\mathfrak{ac}(V_n)\cap \mathfrak{ac}(W_n)=\emptyset$ and that the set $\mathfrak{ac}(V_n)\cup \mathfrak{ac}(W_n)$ is finite.  If the typeset of the type $\langle V_n\tr g(u_n)\rangle$ is finite then it is a subset of the algebraic closure of $V_n$ and hence  a subset of $C_n\setminus D_n$. In this case let $v_n=g_n(u_n)$ and $g_{n+1}=g_n$. If the typeset of the type $\langle V_n\tr g(u_n)\rangle$ is infinite then the typeset of the type  $\langle V_n\tr g(u_n)\rangle_{C_n}$ is infinite. Implying that there exists an element $v_n\in (\mathrm{G}\langle V_n\tr g(u_n)\rangle\cap C_n)\setminus \mathfrak{ac}(W_n)$. Let $V_{n+1}=V_n\cup \{v_n\}$. There exists a function $f\in \mathrm{G}\langle V_n\rangle$ with $f(g_n(u_n))=v_n$. Let $g_{n+1}=f\circ g_n$.  According to Corollary~\ref{cor:interschcop} Item (1) there exists a copy $C_{n+1}$ with $\mathfrak{ac}(V_{n+1})\subseteq C_{n+1}$ and with $C_{n+1}\cap (\mathfrak{ac}(W_n)=\emptyset$.

 If the typeset of the type $\langle W_n\tr h(u_n)\rangle$ is finite then it is a subset of the algebraic closure of $W_n$ and hence  a subset of $ D_n\setminus C_{n+1}$. In this case let $w_n=h_n(u_n)$ and $h_{n+1}=h_n$. If the typeset of the type $\langle W_n\tr h(u_n)\rangle$ is infinite then the typeset of the type  $\langle W_n\tr h(u_n)\rangle_{D_n}$ is infinite. Implying that there exists an element $w_n\in (\mathrm{G}\langle W_n\tr h(u_n)\rangle\cap D_n)\setminus \mathfrak{ac}(C_{n+1})$. Let $W_{n+1}=W_n\cup \{w_n\}$. There exists a function $f'\in \mathrm{G}\langle W_n\rangle$ with $f'(h_n(u_n))=w_n$. Let $h_{n+1}=f'\circ h_n$.  According to Corollary~ \ref{cor:interschcop} Item (1) there exists a copy $D_{n+1}$ with $\mathfrak{ac}(W_{n+1})\subseteq D_{n+1}$ and with $D_{n+1}\cap \mathfrak{ac}(V_{n+1})=\emptyset$. Then $(\mathfrak{ac}(V_{n+1})\subseteq C_{n+1}\setminus D_{n+1}$ and  $\mathfrak{ac}(W_{n+1})\subseteq D_{n+1}\setminus C_{n+1}$.

Recursively for every $n\in \omega$ there exists a set $V_n=\{v_i\mid i\in n\}$ and a set $W_n=\{w_i\mid i\in n\}$ for which $V_n\cap W_n=\emptyset$ and there exists a function $g_n\in \mathrm{G}$ and a function $h_n\in \mathrm{G}$ with $g_n(u_i)=v_i$ for all $i\in n$  and with $h_n(u_i)=w_i$ for all $i\in n$. Let $V=\bigcup_{n\in\omega}V_n$ and let $W=\bigcup_{n\in \omega}W_n$. Then $V\cap W=\emptyset$. Let $g: U\to V$ be the function with $g(u_i)=v_i$ for all $i\in \omega$. Then $g\in \overG[U]$ implying that $V$ is a copy.  Let $h: U\to W$ be the function with $h(u_i)=w_i$ for all $i\in \omega$. Then $h\in \overG[U]$ implying that $W$ is a copy. 
\end{proof}

\begin{thm}\label{thm:constrpairextbas}
 Let $U$ be a countable  set and $\mathrm{G}$ be a algebraically finite subgroup of $\mathfrak S(U)$ and $F$ a finite subset of $U$.    Then there exist two copies $\{C,D\}\subseteq \overG[U]$ with $C\cap D=\mathfrak{ac}(F)$.  
\end{thm}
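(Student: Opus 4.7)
My plan is to reduce to Lemma~\ref{lem:constrpairextbas} by working in the pointwise stabilizer of $A:=\mathfrak{ac}(F)$ and restricting its action to $U':=U\setminus A$. Because $\mathrm{G}$ is algebraically finite, $A$ is finite, and because $\mathfrak{ac}$ is an idempotent closure operator, $\mathfrak{ac}(A)=A$; since $U$ is countably infinite and $A$ is finite, $U'$ is countably infinite. Let $\mathrm{H}:=\mathrm{G}\langle A\rangle$. Each $g\in \mathrm{H}$ fixes $A$ pointwise and therefore setwise stabilizes $U'$, so restriction to $U'$ defines a subgroup $\mathrm{H}'\leq\mathfrak{S}(U')$.

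Next I would verify that $\mathrm{H}'$ satisfies the hypotheses of Lemma~\ref{lem:constrpairextbas}, namely that $\mathrm{H}'$ is algebraically finite with $\mathfrak{ac}_{\mathrm{H}'}(\emptyset)=\emptyset$. For any finite $F'\subseteq U'$, one has $\mathrm{H}'\langle F'\rangle$ equal to the restriction of $\mathrm{G}\langle A\cup F'\rangle$ to $U'$. As every $g\in \mathrm{G}\langle A\cup F'\rangle$ fixes $A$ pointwise, its orbits on $U$ do not straddle $A$ and $U'$: any orbit meeting $A$ is a singleton in $A$. Hence the $\mathrm{H}'\langle F'\rangle$-orbits on $U'$ are exactly the $\mathrm{G}\langle A\cup F'\rangle$-orbits contained in $U'$, so $\mathfrak{ac}_{\mathrm{H}'}(F')=\mathfrak{ac}_{\mathrm{G}}(A\cup F')\setminus A$ is finite. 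Taking $F'=\emptyset$ yields $\mathfrak{ac}_{\mathrm{H}'}(\emptyset)=\mathfrak{ac}_{\mathrm{G}}(A)\setminus A=A\setminus A=\emptyset$.

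Lemma~\ref{lem:constrpairextbas} applied to $\mathrm{H}'$ on $U'$ then produces copies $C',D'\in\overline{\mathrm{H}'}[U']$ with $C'\cap D'=\emptyset$. Set $C:=A\cup C'$ and $D:=A\cup D'$; I claim these lie in $\overG[U]$. Pick $f'\in\overline{\mathrm{H}'}$ with $f'[U']=C'$ and extend it to $f\colon U\to U$ by declaring $f|_A=\mathrm{id}_A$. For any finite $H\subseteq U$, write $H=H_A\cup H_{U'}$ with $H_A:=H\cap A$ and $H_{U'}:=H\cap U'$, and pick $g\in\mathrm{H}$ whose restriction to $U'$ agrees with $f'$ on $H_{U'}$. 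Since $g$ fixes $A$ pointwise, $g$ also agrees with $f$ on $H_A$, so $g$ and $f$ coincide on all of $H$. Hence $f\in\overline{\mathrm{H}}\subseteq\overG$ and $C=f[U]\in\overG[U]$; the symmetric argument handles $D$. Finally $C\cap D=A\cup(C'\cap D')=A=\mathfrak{ac}(F)$, as required.

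The step I expect to be the main obstacle is the verification that $\mathfrak{ac}_{\mathrm{H}'}(\emptyset)=\emptyset$, which rests critically on the idempotence identity $\mathfrak{ac}(A)=A$ and on the observation that $\mathrm{H}$-orbits cannot cross between $A$ and $U'$. Once this is in hand, the extension-by-identity step is a short adherence check and the final intersection computation is immediate.
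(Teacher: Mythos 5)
Your proposal is correct and follows essentially the same route as the paper's proof: pass to the pointwise stabilizer of $\mathfrak{ac}(F)$, restrict its action to $U\setminus\mathfrak{ac}(F)$, check that the restricted group is algebraically finite with empty algebraic closure of $\emptyset$ (using idempotence of $\mathfrak{ac}$), and invoke Lemma~\ref{lem:constrpairextbas}. If anything, you supply more detail than the paper does on the lifting step (the adherence check that $A\cup C'$ is a copy for $\mathrm{G}$), which the paper asserts without proof.
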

\begin{proof} 
It follows from Lemma \ref{lem:fix-copy} that every copy containing the finite set $\mathfrak{ac}(F)$ is a copy of the group $\mathrm{G}\langle \mathfrak{ac}(F)\rangle$. Let $V=U\setminus(\mathfrak{ac}(F))$.  Every copy $I$ of the group $\mathrm{G}\langle \mathfrak{ac}(F)\rangle$ is the disjoint union of $\mathfrak{ac}(F)$ and $I\cap V$. The set  $\mathrm{H}=\{\restrict{g}{V}\mid g\in \mathrm{G}\}$ of functions is a subgroup of the symmetric $\mathfrak{S}(V)$ and the function $\sigma: \mathrm{G}\to \mathrm{H}$ with $\sigma(g)=\restrict{g}{V}$ is a group action automorphism. A subset $I$ of $U$ is a copy for the group $\mathrm{G}$ if and only if $I\cap V$ is a copy for the group $\mathrm{H}$. Because $\mathfrak{ac}_{\mathrm{H}}(\emptyset)=\emptyset$ the Theorem follows from Lemma \ref{lem:constrpairextbas}. 
\end{proof}

 \section{The lattice of intersections of copies}

 Let $U$ be a  set. A \emph{Moore family} on   $U$ is a collection $\mathcal M$ of subsets of $U$  which is closed under intersections. The map  $F\rightarrow \phi_{\mathcal M}(F):= \bigcap \{ X\in \mathcal M: F\subseteq X\}$ is the \emph{closure operator} associated with the  Moore family. This closure operator is  \emph{algebraic} if it is closed under union of chains; equivalently, the Moore family is topologically closed in $\powerset(U)$ equipped with the product topology. If a closure operator is algebraic, the Moore family, once ordered by inclusion,  forms an \emph{algebraic lattice}, i.e. a complete lattice in which every element is a join(possibly infinite) of compact elements,  an element $x$ being \emph{compact} if $x\leq \bigvee X$ in the lattice $\mathcal M$ implies $x\leq \bigvee X'$ for some finite subset of $X$ (but the converse is false in general)(see \cite{gratzer} for information about algebraic lattices).  Let $\mathrm{G}$ be a subgroup of $\mathfrak S(U)$ and $\overline{\mathrm{G}}[U]$  the set of copies. 
We denote by $\widehat {\overline{\mathrm{G}}[U]}$ the set of intersections of members of $\overline{\mathrm{G}}[U]$. Being   closed under intersections this set is a \emph{Moore family}; once ordered by inclusion, this family of intersections is a complete lattice. The map $F \rightarrow \mathfrak{ic}(F)$ is the closure operator associated with the above Moore family. If the closure $F \rightarrow \mathfrak{ic}(F)$ is algebraic then $\widehat {\overline{\mathrm{G}}[U]}$ is  an algebraic lattice. We do not know if the converse holds. In fact:

\begin{problem}  
Is $\widehat {\overline{\mathrm{G}}[U]}$ an algebraic lattice? Is the closure $F \rightarrow \mathfrak{ic}(F)$ algebraic?
\end{problem}

 Let us recall that an element $u$ of a poset $P$ is \emph{completely meet-irreducible} if the poset  $P$ contains an element,  denoted $u+$,  such that $u^+>u$ and  $y\geq u^+$ for all $y>u$, with $y\in P$. 
 
 \begin{lem}\label{meet-irreducible} If $U$ is countable,  the following properties are equivalent for  every element $C\in \widehat {\overline{\mathrm{G}}[U]}$:
 \begin{enumerate}
 \item[(i)] $C$  is completely meet-irreducible;
  \item[(ii)] $C\in{\overline{\mathrm{G}}[U]}$ and there is an element  $x\in U\setminus C$ such that $C$ is maximal among the set of copies  ${\overline{\mathrm{G}}[U]}$ not containing $x$. 
 \end{enumerate}
 \end{lem}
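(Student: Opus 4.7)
The plan is to establish each implication directly from the definitions, using only that $\widehat{\overline{\mathrm{G}}[U]}$ is closed under intersection and that every element of $\widehat{\overline{\mathrm{G}}[U]}$ is an intersection of elements of $\overline{\mathrm{G}}[U]$.

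For the direction $(ii)\Rightarrow(i)$, assume $C\in \overline{\mathrm{G}}[U]$ is maximal among copies not containing a fixed $x\in U\setminus C$. I would define
\[
C^{+} \;:=\; \mathfrak{ic}(C\cup\{x\}) \;=\; \bigcap\{E\in \overline{\mathrm{G}}[U] : C\cup\{x\}\subseteq E\},
\]
which belongs to $\widehat{\overline{\mathrm{G}}[U]}$ as an intersection of copies, satisfies $x\in C^{+}$, and therefore $C^{+}\supsetneq C$. It then remains to check that every $y\in \widehat{\overline{\mathrm{G}}[U]}$ with $y>C$ satisfies $y\supseteq C^{+}$. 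Writing $y=\bigcap\mathcal{F}$ with $\mathcal{F}\subseteq\overline{\mathrm{G}}[U]$, each member $E\in\mathcal{F}$ satisfies $E\supseteq y\supsetneq C$; were $x\notin E$, the copy $E$ would strictly extend $C$ while still avoiding $x$, violating maximality. Hence $x\in E$ for every $E\in\mathcal{F}$, so $C\cup\{x\}\subseteq y$, and then $y\supseteq C^{+}$ follows because $y$ is one of the copies-intersections containing $C\cup\{x\}$ (in fact $y$ is itself an upper bound for $C^{+}$ since every copy above $y$ contains $C\cup\{x\}$).

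For the direction $(i)\Rightarrow(ii)$, assume $C$ is completely meet-irreducible in $\widehat{\overline{\mathrm{G}}[U]}$, with cover $C^{+}$, and pick some $x\in C^{+}\setminus C$. The first step is to show that $C$ is itself a copy, not merely an intersection. Since $C\in \widehat{\overline{\mathrm{G}}[U]}$, we have $C = \bigcap\{D\in\overline{\mathrm{G}}[U] : D\supseteq C\}$. If no such $D$ equalled $C$, every such $D$ would satisfy $D>C$ in the lattice, hence $D\supseteq C^{+}\ni x$; intersecting would then force $x\in C$, contradicting $x\notin C$. Thus some $D$ in the intersection coincides with $C$, so $C\in \overline{\mathrm{G}}[U]$. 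For the maximality condition, suppose $D\in\overline{\mathrm{G}}[U]$ with $C\subseteq D$ and $x\notin D$. If $D\ne C$ then $D>C$ in $\widehat{\overline{\mathrm{G}}[U]}$, whence $D\supseteq C^{+}\ni x$, contradicting $x\notin D$; so $D=C$, giving the required maximality.

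The argument is essentially formal once one notices the correct candidate $C^{+}=\mathfrak{ic}(C\cup\{x\})$ in the first direction and the correct witness $x\in C^{+}\setminus C$ in the second. The only subtle point — and the place I would be most careful — is the verification in $(i)\Rightarrow(ii)$ that $C$ is itself a copy: without this, the maximality condition in $(ii)$ has no meaning, and it is precisely meet-irreducibility (every strict refinement contains $x$) applied to the family of copies above $C$ that allows one to conclude $C\in\overline{\mathrm{G}}[U]$.
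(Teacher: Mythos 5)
Your proof is correct. The direction $(ii)\Rightarrow(i)$ is essentially the paper's: both take $C^{+}=\mathfrak{ic}(C\cup\{x\})$ as the candidate cover, though you are more careful than the paper in checking that \emph{every} element of $\widehat{\overline{\mathrm{G}}[U]}$ strictly above $C$ (not just every copy strictly above $C$) contains $C^{+}$, by writing such an element as an intersection of copies each of which strictly contains $C$ and hence contains $x$ by maximality. The direction $(i)\Rightarrow(ii)$ is where you genuinely diverge. The paper invokes Corollary~\ref{cor:maximal} to produce, for each $x\in U\setminus C$, a \emph{maximal} copy $C_x$ containing $C$ and avoiding $x$, observes $C=\bigcap_x C_x$, and lets complete meet-irreducibility force $C=C_x$ for some $x$ (which yields both that $C$ is a copy and the maximality statement in one stroke). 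You instead pick the witness $x$ inside $C^{+}\setminus C$ and derive maximality directly: any copy strictly above $C$ lies above $C^{+}$ and so contains $x$. This avoids Corollary~\ref{cor:maximal} (and hence the Zorn-type argument behind it) entirely, and it supplies an explicit justification for the claim, dismissed as ``trivial'' in the paper, that a completely meet-irreducible element of $\widehat{\overline{\mathrm{G}}[U]}$ must itself be a copy. The paper's route has the side benefit of exhibiting $C$ as an intersection of maximal copies; yours is the more economical and self-contained argument.
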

 
 \begin{proof} Let $C$ be a completely meet-irreducible element of $\widehat {\overline{\mathrm{G}}[U]}$.  Then trivially $C\in {\overline{\mathrm{G}}[U]}$. Let $x\in U\setminus C$. According to Corollary \ref{cor:maximal} there is some maximal member of ${\overline{\mathrm{G}}[U]}$ which contains $C$ and not $x$.  For each $x\in U\setminus C$,  let $C_x$ be such an element. Then clearly, $C= \bigcap\{C_x: x\in U\setminus C\}$. Since $C$ is completely meet-irreducible then $C=C_x$ for some $x\in U\setminus C$. Conversely, set $C^+:= \{J\in {\overline{\mathrm{G}}[U]} : \{x\}\cup C \subseteq J\}$. Since, by hypothesis  every $J\in {\overline{\mathrm{G}}[U]}$ with  $C\subset J$ will contain $x$, it follows that $C^+\subseteq J$. Hence $C$ is completely meet-irreducible. 
 \end{proof}
 
 \begin{cor} If $U$ is countable, every member of  $\widehat {\overline{\mathrm{G}}[U]}$ is an intersection of completely meet-irreducible elements. 
 \end{cor}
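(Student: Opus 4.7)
The plan is to derive this directly from Lemma \ref{meet-irreducible} together with Corollary \ref{cor:maximal}. Let $C \in \widehat{\overline{\mathrm{G}}[U]}$. By definition, $C = \bigcap \mathcal{F}$ for some family $\mathcal{F} \subseteq \overline{\mathrm{G}}[U]$ of copies, and since every element of $\mathcal{F}$ contains $C$, a double inclusion gives the cleaner representation
\[
C = \bigcap \{ D \in \overline{\mathrm{G}}[U] : C \subseteq D \}.
\]

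Next I would witness each point of $U \setminus C$ by a completely meet-irreducible element of the lattice. Fix $x \in U \setminus C$. The displayed equality forces the existence of at least one copy $D \in \overline{\mathrm{G}}[U]$ with $C \subseteq D$ and $x \notin D$. Apply Corollary \ref{cor:maximal} with the finite set $\{x\}$ and the copy $D$: this yields a copy $D_x \in \overline{\mathrm{G}}[U]$ with $D \subseteq D_x$, $x \notin D_x$, and $D_x$ maximal among copies not containing $x$. By condition (ii) of Lemma \ref{meet-irreducible}, $D_x$ is a completely meet-irreducible element of $\widehat{\overline{\mathrm{G}}[U]}$.

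Finally I would compute $\bigcap_{x \in U \setminus C} D_x$. Each $D_x$ contains $C$, so $C \subseteq \bigcap_{x \in U \setminus C} D_x$; conversely, for every $x \in U \setminus C$ we have $x \notin D_x$, so no such $x$ lies in the intersection, giving $\bigcap_{x \in U \setminus C} D_x \subseteq C$. Thus $C$ is the intersection of the family of completely meet-irreducible elements $\{ D_x : x \in U \setminus C \}$.

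There is no real obstacle, since the two key ingredients (existence of maximal copies avoiding a given set from Corollary \ref{cor:maximal}, which itself rests on the upward-directed union lemma, and the characterization of complete meet-irreducibility in Lemma \ref{meet-irreducible}) are already in place. The only subtlety worth stating explicitly is that the initial reduction uses the fact that $C$ is already closed under the operator $\mathfrak{ic}$, so that ``$x \notin C$'' really does produce a witness copy $D \supseteq C$ avoiding $x$ to which Corollary \ref{cor:maximal} can be applied.
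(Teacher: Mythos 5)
Your proof is correct and follows essentially the same route as the paper: for each $x\in U\setminus C$ produce a copy containing $C$ and avoiding $x$, push it up to a maximal such copy via Corollary \ref{cor:maximal}, recognize that copy as completely meet-irreducible via Lemma \ref{meet-irreducible}(ii), and intersect over all $x$. The initial double-inclusion reduction and the final intersection computation are both sound.
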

 
 \begin{proof} Let $J\in \widehat {\overline{\mathrm{G}}[U]}$. For every $x\in U\setminus J$ there is some $C'\in \overline{\mathrm{G}}[U]$ such that $J\subseteq C'$ and $x\in U\setminus C'$. According to Lemma \ref{meet-irreducible} above every  $C$ maximal among the  members of ${\overline{\mathrm{G}}[U]}$ containing $C'$ and not containing $x$ is completely meet-irreducible. The intersection of those $C$ is $J$. 
 \end{proof}

\begin{thm}\label{thm:alrcinterscl}
Let $U$ be a countable  set and $\mathrm{G}$ be a  subgroup of $\mathfrak S(U)$. Then for every subset $S$ of $U$, the ranked closure $\mathfrak{rc}(S)$ of $S$, is equal to   the least element of $\overline{\overG[U]}$ containing $S$, where  
$\overline {\overG[U]}$  is the topological closure of $\overG[U]$ in $\powerset (U)$ for  the powerset topology .  
\end{thm}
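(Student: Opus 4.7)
The plan is to prove the theorem in two parts: first that $\mathfrak{rc}(S)$ belongs to $\overline{\overG[U]}$, and second that it is contained in every member of $\overline{\overG[U]}$ that contains $S$. Together these give that $\mathfrak{rc}(S)$ is the unique least such element.

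For the membership part, I would verify the standard characterization of points in the closure: $T\in\overline{\overG[U]}$ iff for every finite $F\subseteq T$ and every finite $E\subseteq U\setminus T$, there is a copy $C\in\overG[U]$ with $F\subseteq C$ and $C\cap E=\emptyset$. So fix finite $F\subseteq \mathfrak{rc}(S)$ and finite $E\subseteq U\setminus \mathfrak{rc}(S)$. Since $\mathfrak{rc}$ is an algebraic closure operator (Lemma \ref{lem:rcclosedop}) and $F$ is finite, there is a finite subset $S_0\subseteq S$ with $F\subseteq \mathfrak{rc}(S_0)$. Since $S_0$ is finite, Theorem \ref{thm:rcclinters} gives $\mathfrak{rc}(S_0)=\mathfrak{ic}(S_0)$. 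The set $E$ is disjoint from $\mathfrak{rc}(S)$, hence from $\mathfrak{ic}(S_0)$. Corollary \ref{cor:in/out} then produces a copy $C$ containing $S_0$ and disjoint from $E$; since every copy containing $S_0$ contains $\mathfrak{ic}(S_0)\supseteq F$, this $C$ is the required witness.

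For the minimality part, let $T\in\overline{\overG[U]}$ with $S\subseteq T$, and suppose, for contradiction, that some $x\in\mathfrak{rc}(S)$ lies outside $T$. Then $x$ belongs to $\mathfrak{rc}(F)$ for some finite $F\subseteq S\subseteq T$, and again by Theorem \ref{thm:rcclinters} we have $\mathfrak{rc}(F)=\mathfrak{ic}(F)$, so $x\in\mathfrak{ic}(F)$. Applying the closure characterization to $T$ with sockel $F$ and excluded set $E=\{x\}$ yields a copy $C\in\overG[U]$ with $F\subseteq C$ and $x\notin C$. But $x\in\mathfrak{ic}(F)$ says $x$ lies in every copy containing $F$, a contradiction. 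Hence $\mathfrak{rc}(S)\subseteq T$.

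Combining the two parts, $\mathfrak{rc}(S)$ is a member of $\overline{\overG[U]}$ containing $S$, and it is contained in every such member, so it is the least one. I do not expect a serious obstacle here: the content of the theorem really rests on the finite case $\mathfrak{rc}(F)=\mathfrak{ic}(F)$ (Theorem \ref{thm:rcclinters}) plus the ``in/out'' refinement (Corollary \ref{cor:in/out}), both of which are already available. The only delicate point is the reduction from arbitrary $S$ to a finite sockel $S_0$, which is handled cleanly by the algebraicity of $\mathfrak{rc}$ provided by Lemma \ref{lem:rcclosedop}.
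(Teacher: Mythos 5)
Your proof is correct, and it rests on exactly the same two ingredients the paper uses: the finite-set identity $\mathfrak{rc}(F)=\mathfrak{ic}(F)$ (Theorem \ref{thm:rcclinters}) and the ``in/out'' refinement (Corollary \ref{cor:in/out}). The only difference is organisational: the paper's proof of Theorem \ref{thm:alrcinterscl} is a one-line appeal to Lemma \ref{lem:equalityclosures}, i.e.\ to the equality $\overline{\overG[U]}=\overG^{\mathfrak{rc}}[U]$, and obtains minimality abstractly from the fact that an algebraic closure system is topologically closed in $\powerset(U)$, so that every $T\in\overline{\overG[U]}$ is already $\mathfrak{rc}$-closed and hence contains $\mathfrak{rc}(S)$ whenever it contains $S$. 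You instead prove minimality by a direct separation argument (a copy containing $F$ but omitting $x\in\mathfrak{ic}(F)$ would contradict the definition of $\mathfrak{ic}$), and you handle the passage from arbitrary $S$ to a finite sockel explicitly via the algebraicity of $\mathfrak{rc}$ (Lemma \ref{lem:rcclosedop}). Your membership argument coincides with the paper's proof of the inclusion $\overG^{\mathfrak{rc}}[U]\subseteq\overline{\overG[U]}$ inside Lemma \ref{lem:equalityclosures}. The net effect is a self-contained proof that bypasses the intermediate lemma; the paper's route buys, in addition, the statement that $\overline{\overG[U]}$ is closed under intersections, which your argument does not explicitly deliver but also does not need.
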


\begin{proof}
According to Lemma \ref{lem:equalityclosures},   $\overline{\overG[U]}=\overG^{\mathfrak{rc}}[U]$, thus $\overline{\overG[U]}$ is closed under intersection, and the closure of $S$ w.r.t. this closure system is equal to the closure of $S$ w.r.t. $\overG^{\mathfrak{rc}}[U]$. 
\end{proof}

\begin{lem}\label{lem:alrcinterscl}
Let $U$ be a countable  set and $\mathrm{G}$ be a  subgroup of $\mathfrak S(U)$. Then the closure operator associated with $\widehat{\overG[U]}$ is  algebraic  if and only if $\widehat{\overG[U]}=\overline{\overG[U]}$. 
\end{lem}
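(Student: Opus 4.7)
The plan is to reduce the lemma to the standard fact that a Moore family $\mathcal M\subseteq\powerset(U)$ has an algebraic associated closure operator if and only if $\mathcal M$ is topologically closed in $\powerset(U)$ equipped with the powerset (product) topology. Given this equivalence, both directions of the lemma become short. Indeed, Lemma \ref{lem:equalityclosures} already gives $\widehat{\overG[U]}\subseteq\overline{\overG[U]}$, so the work is to decide whether $\widehat{\overG[U]}$ is topologically closed.

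First I would establish the equivalence. For the easy direction, if $\mathcal M$ is closed under unions of up-directed families and $S$ lies in the topological closure of $\mathcal M$, then every basic neighbourhood $\powerset(U)\llbracket F,E\rrbracket$ of $S$ with $F\in\finp{S}$ and $E\in\finp{U\setminus S}$ meets $\mathcal M$, and the corresponding witnesses $X_{F,E}$ form a directed family whose union is $S$; whence $S\in\mathcal M$. Conversely, if $\mathcal M$ is topologically closed and $(C_i)_{i\in I}$ is up-directed in $\mathcal M$ with $C:=\bigcup_i C_i$, then every basic neighbourhood $\powerset(U)\llbracket F,E\rrbracket$ of $C$ is met by some $C_i$ (pick $i$ such that the finite set $F\subseteq C$ is contained in $C_i$; then $E\cap C_i\subseteq E\cap C=\emptyset$), so $C$ lies in the closure of $\mathcal M$, hence in $\mathcal M$. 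Closure under directed unions is precisely the defining property of an algebraic closure operator associated with a Moore family, so this yields the equivalence.

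Now I apply the equivalence to $\mathcal M:=\widehat{\overG[U]}$, which is a Moore family by construction. If $\widehat{\overG[U]}=\overline{\overG[U]}$, then $\widehat{\overG[U]}$ is topologically closed, hence the associated closure operator is algebraic. Conversely, if the closure operator is algebraic, then $\widehat{\overG[U]}$ is topologically closed. Since $\overG[U]\subseteq\widehat{\overG[U]}$, the topological closure of $\overG[U]$ is contained in $\widehat{\overG[U]}$, giving $\overline{\overG[U]}\subseteq\widehat{\overG[U]}$. Combined with the reverse inclusion from Lemma \ref{lem:equalityclosures}, we conclude $\widehat{\overG[U]}=\overline{\overG[U]}$.

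The only subtle point is the equivalence between algebraicity of a Moore family and topological closedness in the product topology, but this is entirely general and does not depend on anything about $\mathrm{G}$; the argument above using the basic neighbourhoods $\powerset(U)\llbracket F,E\rrbracket$ from Definition \ref{defin:contpowerbrr} handles it cleanly. So I expect no real obstacle beyond invoking this standard correspondence.
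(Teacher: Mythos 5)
Your proof is correct and follows essentially the same route as the paper: both reduce the statement to the general fact that a Moore family is algebraic if and only if it is topologically closed in $\powerset(U)$, and then conclude via the sandwich $\overG[U]\subseteq\widehat{\overG[U]}\subseteq\overline{\overG[U]}$ of Lemma \ref{lem:equalityclosures}. One small imprecision in your verification of the standard fact: the raw witnesses $X_{F,E}$ need not form a directed family with union $S$, so in that direction you should instead use the sets $\phi_{\mathcal M}(F)=\bigcap\{X\in\mathcal M\mid F\subseteq X\}$ for $F\in\finp{S}$, which belong to $\mathcal M$ (intersection closure), are contained in $S$ (apply the hypothesis with $E$ a singleton of $U\setminus S$), and do form a directed family with union $S$.
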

\begin{proof}
The Moore family $\widehat{\overG[U]}$ defines  an algebraic closure system if and only if  it is closed in $\powerset(U)$ equipped with the product topology. Since $\overG[U] \subseteq 
\widehat{\overG[U]} \subseteq \overline {\overG[U]}$
 by Lemma \ref{lem:equalityclosures},  it is closed if and only if it is equal to $\overline {\overG[U]}$. 
\end{proof}
     
\begin{problem} 
Is $\mathfrak{rc}(S)=\mathfrak{ic}(S)$ for all subsets $S$ of $U$?
\end{problem}

\begin{problem} Suppose that there is more than one copy. Does then $U$ contain an infinite independent set w.r.t the closure system given by the collection of intersections of copies.  That is an infinite subset $S$ such that $x\not \in \mathfrak{ic}({S\setminus \{x\}})$ for every $x\in S$?  Does this conclusion hold under  the additional assumption that the algebraic closure of every finite set is finite? (The group is then algebraically finite.)  
\end{problem}

\section{Embedding the powerset of $\aleph_0$}

\begin{thm}\label{thm:Woligomorphic}
Let $\mathfrak{M}:=(M,R_i)_{i\in I}$ be an infinite countable relational structure with language $\mathcal{L}$ in which 
$T:=Th(\mathfrak{M})$ is $\aleph_0$-categorical. Let $\mathbb{Q}$ be the set of 
rational numbers and let $\mathscr{B}$ be the lattice of subsets of $\mathbb{Q}$
ordered under set inclusion. Let $\mathscr{C}$ be the family of subsets $N\subseteq M$
for which $\restrict{\mathfrak{M}}{N}$, the {\em restriction} of $\mathfrak{M}$ to $N$,
is isomorphic to $\mathfrak{M}$. (i.e., $\mathscr {C}$ is the set of copies.) Then for $\mathscr{C}$ ordered by
 set inclusion there exists an  order embedding of $\mathscr{B}$ into 
$\mathscr{C}$.
\end{thm}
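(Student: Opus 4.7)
The plan is to reduce this theorem to Theorem~\ref{thm:oligomorphic} already announced in the outline, which embeds $(\powerset(\omega),\subseteq)$ into $(\overline{\mathrm{G}}[U],\subseteq)$ whenever $\mathrm{G}$ is oligomorphic. First I would invoke the Ryll-Nardzewski theorem: since $T := Th(\mathfrak{M})$ is $\aleph_0$-categorical, the group $\mathrm{G} := \aut(\mathfrak{M})$ is oligomorphic on $M$, and in particular $\mathrm{G}$ is algebraically finite by Lemma~\ref{lem:acfinoligo}. Next, since $\mathbb{Q}$ is countable, any bijection $\mathbb{Q}\to\omega$ induces a lattice isomorphism $\mathscr{B}=(\powerset(\mathbb{Q}),\subseteq)\cong(\powerset(\omega),\subseteq)$, so producing an order embedding of the latter will suffice.

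Second, I would verify the identification $\mathscr{C}=\overline{\mathrm{G}}[M]$. The introduction has already noted that in the $\aleph_0$-categorical setting one has $\emb(\mathfrak{M})=\overline{\mathrm{G}}$: the countable model $\mathfrak{M}$ is $\omega$-saturated, hence (in the sense of orbits on finite tuples) ultrahomogeneous, so any isomorphism $\mathfrak{M}\to\restrict{\mathfrak{M}}{N}$ is a pointwise limit of elements of $\mathrm{G}$. Thus $N\in\mathscr{C}$ if and only if $N=f[M]$ for some $f\in\overline{\mathrm{G}}$, giving $\mathscr{C}=\overline{\mathrm{G}}[M]$. Combining these two reductions, the embedding $\mathscr{B}\hookrightarrow\mathscr{C}$ asked for is precisely the embedding of $(\powerset(\omega),\subseteq)$ into $(\overline{\mathrm{G}}[M],\subseteq)$ produced by Theorem~\ref{thm:oligomorphic}.

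For completeness I would indicate how Theorem~\ref{thm:oligomorphic} itself is proved. The natural tool is Lemma~\ref{lem:bernstein}, whose hypothesis (that every orbit of every stabilizer $\mathrm{G}\langle F\rangle$ on $M\setminus F$ is infinite) can fail in the presence of nontrivial algebraic closure. The remedy is to absorb the finite kernel $K := \mathfrak{ac}(\emptyset)$ into the base and, more generally, to ensure that the sockels one uses remain algebraically closed at every stage. Fix an enumeration of the countably many types over each finite algebraically closed sockel (finite by oligomorphicity), and perform a Bernstein-style partition of $M\setminus K$ into countably many infinite pieces $P_0,P_1,P_2,\dots$ such that, for every finite algebraically closed subset $F$ and every infinite typeset $\mathrm{G}\langle F\tr p\rangle$, each $P_n$ meets that typeset. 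For $S\subseteq\omega$ set $A_S := K \cup \bigcup_{n\in S}P_n$; Theorem~\ref{thm:characterization}, together with Lemma~\ref{lem:contall} (which guarantees that ranked, hence here finite, typesets over any finite $F\subseteq A_S$ are automatically captured by $\mathfrak{ac}(F)\subseteq A_S$), then shows $A_S\in\overline{\mathrm{G}}[M]$. The map $S\mapsto A_S$ is visibly an order embedding.

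The main obstacle is this Bernstein-style partition: one must simultaneously ensure that each $A_S$ is a copy in the strong sense of Theorem~\ref{thm:characterization} (meeting every infinite typeset over every finite $F\subseteq A_S$, not merely over $\emptyset$) and that the map $S\mapsto A_S$ is strictly order-preserving. Oligomorphicity is exactly what allows this: it keeps the set of typesets over each finite sockel finite, so that only countably many ``Bernstein requirements'' must be satisfied across all algebraically closed finite sockels, and the argument of Lemma~\ref{lem:bernstein} (where $\aleph_0$ replaces the role of the continuum in Kuratowski's splitting lemma \cite{kuratowski}) extends from a single $A$ to a full countable family $\{P_n\}_{n\in\omega}$.
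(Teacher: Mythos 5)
Your first move is circular: Theorem~\ref{thm:oligomorphic} is Theorem~\ref{thm:Woligomorphic2}, and in the paper that result is \emph{deduced from} Theorem~\ref{thm:Woligomorphic} (via the canonical homogeneous structure), so reducing \ref{thm:Woligomorphic} to \ref{thm:oligomorphic} puts the entire burden on your third paragraph, the independent ``Bernstein-style'' proof. (A smaller quibble: $\mathscr{C}=\overline{\mathrm{G}}[M]$ needs homogeneity or prehomogeneity, not mere $\aleph_0$-categoricity; but since only the inclusion $\overline{\mathrm{G}}[M]\subseteq\mathscr{C}$ is needed for your reduction, that part is harmless.)

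The Bernstein sketch, however, has a fatal structural flaw, not just a gap. Any construction of the form $A_S:=K\cup\bigcup_{n\in S}P_n$ for a fixed partition $\{P_n\}$ forces $A_{S\cup T}=A_S\cup A_T$, i.e.\ it requires the family of copies to be closed under \emph{arbitrary} (not merely up-directed, as in Lemma~\ref{lem:updirected}) unions along your image. That fails as soon as the algebraic closure is nontrivial. Take the paper's own example from Section~\ref{sect:algebcl}: $U=[\N]^2$ with $\mathrm{G}$ induced by $\mathfrak S(\N)$, which is oligomorphic with $K=\mathfrak{ac}(\emptyset)=\emptyset$. Here $\overG$ consists of the maps induced by injections of $\N$, so the copies are exactly the sets $[A]^2$ with $A\subseteq\N$ infinite; and $[A]^2\cup[B]^2$ equals some $[C]^2$ only when $A\subseteq B$ or $B\subseteq A$ (otherwise the ``mixed'' pairs, which lie in $\mathfrak{ac}$ of two disjoint edges, are missing). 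Hence no partition into disjoint nonempty pieces $P_m,P_n$ can have $P_m$, $P_n$ and $P_m\cup P_n$ all copies, and your map $S\mapsto A_S$ cannot exist for this group. Your appeal to Lemma~\ref{lem:contall} to handle the finite typesets is also circular: its conclusion is about sets already known to be copies, which is precisely what is to be proved about $A_S$. The paper avoids all of this by a model-theoretic argument with no union-homomorphism built in: Skolemize $\mathfrak{M}$, use Ehrenfeucht--Mostowski (Theorem~3.3.10 of \cite{Chang-Keisler}) to realize $\Q$ as a set of order indiscernibles, and send $X\subseteq\Q$ to its Skolem hull $\mathfrak{M}'_X$ --- an elementary substructure, hence by $\aleph_0$-categoricity a copy --- with injectivity coming from $\mathfrak{M}'_X\cap\Q=X$, which is where indiscernibility is used. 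Note that the Skolem hull of $X\cup Y$ is in general strictly larger than the union of the hulls, which is exactly the flexibility your construction lacks.
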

\begin{proof}
\underline{Step 1:} Let $\mathfrak{M}_0:=\mathfrak{M}$ with language $\mathcal{L}_0:=\mathcal{L}$. Add Skolem functions expanding $\mathfrak{M}$ to a structure
$\mathfrak{M}'$. This is done iteratively. Fix an element, say $\nu\in M$. 
For each $L_0$ formula $\exists x \varphi(x,y_1,\dots,y_n)$, with at most $y_1,\dots,y_n$
occurring freely, add a function $f_{\exists x\varphi}: M^n\to M$ and axiom:
\[
\exists x\varphi(x,y_1,\dots,y_n) \leftrightarrow \varphi[f_{\exists x\varphi|}(y_1,\dots,y_n), y_1,\dots,y_n].
\]
Expand $\mathfrak{M}_0$ to $\mathfrak{M}_1$ using the axiom of choice to pick a witness
when $\exists x \varphi(x,y_1,\dots, y_n)$ holds for $f_{\exists x\varphi|}(y_1,\dots,y_n)$ and define 
$f_{\exists x\varphi}(y_1,\dots,y_n)=\nu$ otherwise. This gives the language $\mathcal{L}_1$ and the expansion $\mathfrak{M}_1$ of $\mathfrak{M}_0$. Iterate to form $\mathcal{L}_k\subseteq \mathcal{L}_{k+1}$ and $\mathfrak{M}_{k+1}$ an expansion of $\mathfrak{M}_k$. Then $\mathcal{L}'$ is the union of the $\mathcal{L}_k$
and $\mathfrak{M}'$ the common expansion of the structures $\mathfrak{M}_k$ in the 
natural way. Let $T'$ be the theory of $\mathfrak{M}'$.

It is a standard fact that if $\mathcal{S}$ with domain $S$ is a substructure, (in the expanded language,) of $\mathfrak{M}'$ then it is an elementary substructure of $\mathfrak{M}'$. (It is easy to check that if $\{n_1,n_2,\dots, n_k\}\subseteq S$ and if $\exists x\varphi(x,n_1,n_2,\dots,n_k)$ is a sentence true in $\mathfrak{M}'$ then $f_{\exists x\varphi}(n_i\dots n_k)\in S$  and $\mathfrak{M}' \models   \varphi[f_{\exists x\varphi}(n_1,\dots,n_k),n_1,\dots,n_k]$.)

\vskip 4pt
\noindent
\underline{Step 2:} 
We use Theorem 3.3.10 of \cite{Chang-Keisler}: 

Theorem: \label{thm:ChKei} 
{\em Let $T$ be a theory in $\mathcal{L}$ with infinite models, and let $\langle X,<\rangle$ be a simply ordered set. Then there is a model $\mathfrak{A}$ of $T$ with $X$ subset of the domain of $\mathfrak{A}$ and such that $X$ is a set of indiscernibles in $\mathfrak{A}$.}

Indiscernibles with respect to  Theorem \ref{thm:ChKei} in \cite{Chang-Keisler} means order indiscernibles. 
Translated to our notation we obtain:

Theorem:\label{thm:LPSW}
{\em There is a model $\mathfrak{M}''$ of $\mathcal{L}'$ with $\mathbb{Q}$ subset of the domain $M''$ of  $\mathfrak{M}''$ and such that $\mathbb{Q}$ is a set of indiscernibles in $\mathfrak{A}$.}

Let $\mathfrak{A}$ be the reduct of $\mathfrak{M}''$ to $\mathcal{L}$. Because the structure $\mathfrak{M}$ is $\aleph_0$-categorical we may assume without loss of generality that $\mathfrak{M}=\mathfrak{A}$. 

For $X\subset \mathbb{Q}$ let $\mathfrak{M}'_X$ be the closure of $X$ under Skolem functions. 
  Because $\mathfrak{M}'_X$ is an elementary restriction of $\mathfrak{M}'$,  the theory of $\mathfrak{M}'_X$ is $T'$ and hence  the reduct to $\mathcal{L}$ is a model whose theory is $T$. Implying, again due to categoricity,  that   $\mathfrak{M}'_X$ is a copy of $\mathfrak{M}$. Note that $\nu\in \mathfrak{M}'_X$ only if there is a term $f_{\exists x\varphi}(x_1,\dots,x_k)$ for which $\exists x\varphi(x,x_1,\dots,x_k)$ holds.  Also, it is clear that $X\subseteq X_1$ implies $\mathfrak{M}'_X\subseteq \mathfrak{M}'_{X_1}$. It remains to prove that if $X\not= X_1$ then  $\mathfrak{M}'_X\not= \mathfrak{M}'_{X_1}$. This will follow from the following: 

\vskip 3pt
\noindent
\underline{Claim:} Let $X\subseteq \mathbb Q$ then $\mathfrak{M}'_X\cap X=X$. \\ 
Let $y\not\in X$ but $y\in \mathfrak{M}'_X$, for a contradiction. Then there is a term $t$ of $\mathcal{L}'$ and rationals $x_1<\dots<x_k$ in $X$ so that $y=t(x_1,\dots, x_k)$. According to construction $\mathrm{M}'\models \exists ! x\, (x=t(x_1,\dots,x_k))$. But if a rational $y$ is in the same interval of $x_1,\dots, x_k$ as $x$ then $\mathfrak{M}'\models y=t(x_1,\dots,x_k)$, because the rationals form a set of order indiscernables. 

 \end{proof}

 \begin{thm}\label{thm:Woligomorphic2}
Let $U$ be a countable set and $\mathrm{G}$ be  an oligomorphic subgroup of $\mathfrak{S}(U)$. Then  there exists an embedding of $(\powerset (\omega);\subseteq)$, the power set of $\omega$ ordered by  inclusion,  into  $(\overline{\mathrm{G}}[U];\subseteq)$, the set of copies for the group $\mathrm{G}$ ordered by inclusion.
\end{thm}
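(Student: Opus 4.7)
The plan is to reduce this statement to Theorem \ref{thm:Woligomorphic} by realizing $\overline{\mathrm{G}}[U]$ as the set of copies of an appropriate $\aleph_0$-categorical homogeneous structure. Specifically, I would form the canonical relational structure $\mathfrak{M}$ determined by $\mathrm{G}$, whose $n$-ary relations are the orbits of the action of $\mathrm{G}$ on $U^n$. Since $\mathrm{G}$ is oligomorphic, for each $n$ there are only finitely many such orbits, so $\mathfrak{M}$ is a countable relational structure with countable language. As recalled in the introduction (via the theorem of Ryll-Nardzewski together with the canonical structure construction), $\mathfrak{M}$ is homogeneous, $\aut(\mathfrak{M})=\overline{\mathrm{G}}\cap \mathfrak S(U)$, and $T:=Th(\mathfrak{M})$ is $\aleph_0$-categorical.

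The key identification to establish is that $\overline{\mathrm{G}}[U]$ coincides with the set of copies of $\mathfrak{M}$, i.e., with those $N\subseteq U$ such that $\restrict{\mathfrak{M}}{N}$ is isomorphic to $\mathfrak{M}$. On the one hand $\mathrm{G}\subseteq \aut(\mathfrak{M})$, since elements of $\mathrm{G}$ trivially preserve $\mathrm{G}$-orbits; hence $\overline{\mathrm{G}}\subseteq \overline{\aut(\mathfrak{M})}$. On the other hand $\aut(\mathfrak{M})=\overline{\mathrm{G}}\cap \mathfrak S(U)\subseteq \overline{\mathrm{G}}$, and since $\overline{\mathrm{G}}$ is closed in the function topology, $\overline{\aut(\mathfrak{M})}\subseteq \overline{\mathrm{G}}$. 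Thus $\overline{\mathrm{G}}=\overline{\aut(\mathfrak{M})}$. Using the homogeneity of $\mathfrak{M}$, which by the standard back-and-forth argument gives $\emb(\mathfrak{M})=\overline{\aut(\mathfrak{M})}$, we conclude that $\overline{\mathrm{G}}[U]=\emb(\mathfrak{M})[U]$ is exactly the set of copies of $\mathfrak{M}$.

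With this identification in hand, the conclusion is immediate: Theorem \ref{thm:Woligomorphic} applied to $\mathfrak{M}$ produces an order embedding of $(\powerset(\Q);\subseteq)$ into the poset of copies of $\mathfrak{M}$, and composing with any bijection between $\omega$ and $\Q$ yields the desired order embedding of $(\powerset(\omega);\subseteq)$ into $(\overline{\mathrm{G}}[U];\subseteq)$.

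I expect the only real obstacle to be the bookkeeping needed to pass from the group-theoretic viewpoint (copies for the group $\mathrm{G}$) to the model-theoretic viewpoint (copies of the canonical homogeneous structure), in particular verifying $\overline{\mathrm{G}}=\overline{\aut(\mathfrak{M})}$. The argument itself is brief, but making sure the correct closures and orbit structures are matched is the delicate step; once this is settled, all of the work lies in the Ehrenfeucht-Mostowski-style construction already carried out in Theorem \ref{thm:Woligomorphic}.
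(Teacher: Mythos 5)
Your proposal is correct and follows essentially the same route as the paper: pass to the canonical homogeneous structure $\mathfrak{M}$ determined by the orbits of $\mathrm{G}$, invoke Ryll-Nardzewski for $\aleph_0$-categoricity, identify $\overline{\mathrm{G}}[U]$ with the set of copies of $\mathfrak{M}$ via homogeneity, and apply Theorem \ref{thm:Woligomorphic}. Your extra care in verifying $\overline{\mathrm{G}}=\overline{\aut(\mathfrak{M})}$ (rather than just asserting $\mathrm{G}=\aut(\mathfrak{M})$, which requires $\mathrm{G}$ closed) is a welcome refinement but does not change the argument.
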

\begin{proof}
Let $\mathfrak{M}$ be the homogeneous canonical relational structure associated with the group $\mathrm{G}$. (See \cite{cameronbook} page 26.) Then $\mathrm{G}=\aut(\mathfrak{M})$.  It follows from \cite{ryll}  that the structure $\mathfrak{M}$ is $\aleph_0$-categorical.  Let $\mathscr{C}$ be the family of subsets $N\subseteq M$
for which $\restrict{\mathfrak{M}}{N}$, the {\em restriction} of $\mathfrak{M}$ to $N$,
is isomorphic to $\mathfrak{M}$. Then $\mathscr{C}$ is the set of copies of $\mathfrak{M}$. The structure $\mathfrak{M}$ is homogeneous implying that $\mathscr{C}$ is also the set of copies for  the group $\mathrm{G}$, that is $\mathscr{C}=\overG[U]$.  The Theorem follows from Theorem \ref{thm:Woligomorphic}. 
 
 \end{proof}


\begin{thebibliography}{9} 
  

\bibitem{cameronbook}
P.J.~Cameron,
 {\em Oligomorphic permutation groups}.
 Cambridge University Press, Cambridge, 1990.
 
 
\bibitem{Cameron}
P.J.~Cameron, Metric and Topological Aspects of the Symmetric Group of Countable Degree, {\em European J. Combinatorics} (1996) \textbf{17}, 135--142. 

\bibitem{Chang-Keisler} C.C.~Chang, H. J.~Keisler, Model Theory, third edition, Dover publications, (2012)


 
 \bibitem{fraisse54}
R.~Fra{\"\i}ss{\'e},
 Sur l'extension aux relations de quelques propri\'etes des ordres,
 {\em Ann. Sci. Ucole Norm. Sup.} {\bf 71} (1954), pp.~361--388.

 
 \bibitem{Fra}   R.~Fra\"\i ss\'e,   Theory of Relations, Revised Udition,
in: {\em  Studies in Logic and the Foundations of Mathematics},
 {\bf 145},  North Holland 2000 ISBN 0-444-50542-3 CV~0.
 
 \bibitem{gratzer} G.~Gr\"atzer, General lattice theory. With appendices by B. A. Davey, R. Freese, B. Ganter, M. Greferath, P. Jipsen, H. A. Priestley, H. Rose, E. T. Schmidt, S. E. Schmidt, F. Wehrung and R. Wille. Reprint of the 1998 second edition,  
 Birkh\"auser Verlag, Basel, 2003

\bibitem{Kechris}
A. S.~Kechris, Classical descriptive set theory, {\em Graduate Texts in Mathematics}, vol. 156, Springer-Verlag, New York, 1995. 


\bibitem{kuratowski}K.~Kuratowski, 
Topology, vol I, Academic Press, 1966. 

\bibitem{kurilic1}M. Kurili\'c, Posets of copies of countable scattered linear orders. Ann. Pure Appl. Logic 165 (2014), no. 3, 895-912.

\bibitem{kurilic2}M.~Kurili\'c, Posets of isomorphic substructures of relational structures. 
Zb. Rad. (Beogr.) 17(25) (2015), Selected topics in combinatorial analysis, 117-144.

\bibitem{kurilic-todorcevic} M.~Kurili\'c, S.~Todor\u{c}evi\'c, 
The poset of all copies of the random graph has the 2-localization property, Ann. Pure Appl. Logic 167 (2016), no. 8, 649-662. 

\bibitem{kurilic-kuzeljevic} M.~Kurili\'c, B.~Kuzeljevi\'c, Antichains of Copies of Ultrahomogeneous Structures, arXiv:1904.00656, April 2019. 

\bibitem {pabion} J-F.~Pabion, Relations pr\'ehomog\`enes, C.R. Acad. Sciences Paris, S\'er. A-B t.274(7), 1972, 529-531.

 \bibitem{siblings}
 C.~Laflamme, M.~Pouzet, N.~Sauer, and R.~Woodrow, Siblings of an $\aleph_0$-categorical relational structure, arXiv preprint arXiv:1811.04185 (2018). 


\bibitem {pouzet} M.~Pouzet, Mod\`ele universel d'une th\'eorie $n$-compl\`ete: mod\`ele uniform\'ement pr\'ehomog\`ene, C.R. Acad. Sci. Paris, S\'er. A-B t.274(7), 1972, 695-698.

\bibitem{ryll} 
C.~Ryll-Nardzewski,
 On categoricity in power $\leq \aleph_{\alpha}$.
{\em Bull. Acad. Pol. Sci. S\'er. Math. Astr. Phys.} {\bf 7} (1959), pp.~545--548.

\bibitem{Rosenstein}
J. G.~Rosenstein, Linear orderings, {\em Academic Press} \textbf{98}, (1982). 

\bibitem{sauer} N.~Sauer, Copies and equimorphic structures, March 29,  2019. FG1 Seminar, TU Wien. 
 

\end{thebibliography}
\end{document}